\numberwithin{equation}{section}
\newtheoremstyle{fancy1}{10pt}{10pt}{\itshape}{12pt}{\textsc\bgroup}{.\egroup}{8pt}{
}
\newtheoremstyle{fancy2}{10pt}{10pt}{}{12pt}{\itshape}{.}{8pt}{ }
\theoremstyle{fancy1}
\newtheorem{cor}[equation]{Corollary}
\newtheorem{lem}[equation]{Lemma}
\newtheorem{prop}[equation]{Proposition}
\newtheorem{thm}[equation]{Theorem}
\newtheorem{main}{Theorem}
\newtheorem*{main*}{Theorem}
\newtheorem*{cor*}{Corollary}
\newtheorem*{problem*}{Problem}
\renewcommand{\thetable}{\theequation}
\theoremstyle{fancy2}
\newtheorem*{rem*}{Remark}
\newcommand{\cref}[1]{Corollary~\ref{#1}}
\newcommand{\lref}[1]{Lemma~\ref{#1}}
\newcommand{\pref}[1]{Proposition~\ref{#1}}
\newcommand{\tref}[1]{Theorem~\ref{#1}}
\newcommand{\gt}{\theta}
\newcommand{\gs}{\sigma}
\newcommand{\RP}{\mathbb{R\mkern1mu P}}
\newcommand{\CP}{\mathbb{C\mkern1mu P}}
\newcommand{\Sph}{\mathbb{S}}
\newcommand{\Sphpm}{\mathbb{S}^{\scriptscriptstyle{\pm}}}
\newcommand{\C}{{\mathbb{C}}}
\newcommand{\R}{{\mathbb{R}}}
\newcommand{\Z}{{\mathbb{Z}}}
\newcommand{\QH}{{\mathbb{H}}}
\renewcommand{\H}{\ensuremath{\operatorname{H}}}
\newcommand{\G}{\ensuremath{\operatorname{G}}}
\newcommand{\SO}{\ensuremath{\operatorname{SO}}}
\renewcommand{\O}{\ensuremath{\operatorname{O}}}
\newcommand{\U}{\ensuremath{\operatorname{U}}}
\newcommand{\SU}{\ensuremath{\operatorname{SU}}}
\newcommand{\T}{\ensuremath{\operatorname{T}}}
\renewcommand{\S}{\ensuremath{\operatorname{S}}}
\renewcommand{\P}{\ensuremath{\operatorname{P}}}
\newcommand{\N}{\ensuremath{\operatorname{N}}}
\newcommand{\K}{\ensuremath{\operatorname{K}}}
\renewcommand{\L}{\ensuremath{\operatorname{L}}}
\newcommand{\fso}{{\mathfrak{so}}}
\newcommand{\orth}[2]{\text{S(O(#1)O(#2))}}
\def\con#1=#2(#3){#1 \equiv #2 \bmod{#3}}
\newcommand{\diag}{\ensuremath{\operatorname{diag}}}
\newcommand{\rank}{\ensuremath{\operatorname{rk}}}
\DeclareMathOperator{\Hom}{Hom} \DeclareMathOperator{\id}{id}
\DeclareMathOperator{\Id}{Id}
\newcommand{\Kpo}{\K_{\scriptscriptstyle{0}}^{\scriptscriptstyle{+}}}
\newcommand{\Kmo}{\K_{\scriptscriptstyle{0}}^{\scriptscriptstyle{-}}}
\newcommand{\Kpm}{\K^{\scriptscriptstyle{\pm}}}
\newcommand{\Kp}{\K^{\scriptscriptstyle{+}}}
\newcommand{\Km}{\K^{\scriptscriptstyle{-}}}
\newcommand{\Dp}{\mathbb{D}^{\scriptscriptstyle{+}}}
\newcommand{\Dm}{\mathbb{D}^{\scriptscriptstyle{-}}}
\newcommand{\Dpm}{\mathbb{D}^{\scriptscriptstyle{\pm}}}
\newcommand{\subo}{_{\scriptscriptstyle{0}}}
\newcommand{\no}{\noindent}
\newcommand{\co}{{cohomogeneity}}
\newcommand{\coo}{{cohomogeneity one}}
\newcommand{\com}{{cohomogeneity one manifold}}
\newcommand{\coa}{{cohomogeneity one action}}
\newcommand{\nn}{non-negative}
\newcommand{\nnc}{non-negative curvature}
\newcommand{\pont}{Pontryagin}
\begin{document}

\title{Lifting Group Actions and Nonnegative Curvature}

\author{Karsten Grove}
\address{University of Maryland\\
     College Park , MD 20742}
\email{kng@math.umd.edu}
\author{Wolfgang Ziller}
\address{University of Pennsylvania\\
     Philadelphia, PA 19104}
\email{wziller@math.upenn.edu}
\thanks{The first named author was supported in part by the Danish Research
Council and the second author by the Francis J. Carey Term Chair and
the Clay Institute. Both authors were supported by grants from the
National Science Foundation.}

\maketitle



Since the emergence of the fundamental structure theorem for
complete open manifolds of nonnegative curvature due to Cheeger and
Gromoll \cite{CG}, one of the central issues in this area has been
to what extent the converse to this so-called soul theorem holds. In
other words: Which total spaces of vector bundles over compact
nonnegatively curved manifolds  admit (complete) metrics with
nonnegative curvature? The first examples where no such metrics
exist were found by Ozaidin and Walschap \cite{OW}. More recently a
wealth of other examples have been found by Belegradek and Kapovitch
\cite{BK1},\cite{BK2}. So far, however, no obstructions are known
when the base has finite fundamental group, and in particular when
it is simply connected.

It is well known and easy to see that all vector bundles over
compact simply connected 2- and 3-manifolds with nonnegative
curvature (i.e. $\Sph^2$ and $\Sph^3$ by \cite{Ha}) admit complete
metrics with nonnegative curvature. The first non-trivial case was
treated in \cite{GZ}, where it was shown
    that all vector bundles over $\Sph^4$ can be equipped with
such a metric. In this paper,  we consider vector bundles over the
remaining known closed simply connected 4-manifolds with nonnegative
curvature, i.e., $\CP^2$ , $\Sph^2\times \Sph^2$, and $\CP^2 \#
{\pm}\CP^2$. One easily sees that a vector bundle over any of these
spaces admits a complete metric of nonnegative curvature if its
structure group reduces to a torus, see \tref{general}.
 It turns out that most vector bundles over $\Sph^2\times \Sph^2$,
 and $\CP^2 \#
{\pm}\CP^2$ are of this form, see \tref{product-sum}, in contrast to
vector bundles over $\CP^2$.  As a consequence of one of our main
results we have:

\begin{main}
The total space of every vector bundle over $\CP^2$ with non-trivial
second Stiefel Whitney class $w_2$ admits a complete metric of
nonnegative sectional curvature.
\end{main}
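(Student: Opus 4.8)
The plan is to apply the lifting construction of this paper to the standard \coo\ action of $\SO(3)$ on $\CP^2$. Recall that $\SO(3)\subset\SU(3)$, acting through real matrices, has precisely two singular orbits on $\CP^2$, namely the real locus $\RP^2$ and the smooth conic $\{z_0^2+z_1^2+z_2^2=0\}\cong\Sph^2$, and that \emph{both} of these have codimension two. Given a vector bundle $E\to\CP^2$ of rank $k$ with $w_2(E)\ne0$, I will lift this action to the oriented orthonormal frame bundle $P\to\CP^2$, a principal $\SO(k)$-bundle. The $\SO(3)\times\SO(k)$-orbit through a point of $P$ lying over $x\in\CP^2$ is the restriction of $P$ to the $\SO(3)$-orbit of $x$, so $P$ is again \coo, with both singular orbits of codimension two; hence $P$ admits an invariant metric of $\sec\ge0$. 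Since $E=P\times_{\SO(k)}\R^k$, the total space of $E$ is the quotient of $P\times\R^k$ — carrying the product of the invariant metric on $P$ and the flat metric — by the free isometric diagonal $\SO(k)$-action, and O'Neill's formula produces the desired complete metric of $\sec\ge0$.

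Everything hinges on the lift, and this is exactly where the hypothesis $w_2(E)\ne0$ enters. Because the inclusion $\RP^2\hookrightarrow\CP^2$ induces an isomorphism on $H^2(\,\cdot\,;\Z_2)$, while the conic represents twice a generator and so $H^2(\CP^2;\Z_2)\to H^2(\Sph^2;\Z_2)$ vanishes, the bundle $E$ restricts over the two singular orbits to $\lambda\oplus\lambda\oplus\underline{\R}^{\,k-2}$ and to a trivial bundle, respectively, where $\lambda$ is the tautological line bundle over $\RP^2$. Both restrictions are $\SO(3)$-homogeneous: $\lambda\oplus\lambda\oplus\underline{\R}^{\,k-2}=\SO(3)\times_{\O(2)}(\R^2_{\eps}\oplus\R^{\,k-2})$, with $\R_{\eps}$ the sign representation of $\O(2)$, and the trivial bundle obviously so. Thus the action lifts over each singular tube, and what remains is to match the two homogeneous structures along the regular part — which amounts to choosing a common restriction of the two isotropy representations — and to verify that the free parameter in this matching realizes every possible value of $p_1(E)$ (which by the Wu formulas is exactly the odd integers when $w_2(E)\ne0$). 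Carrying out this bookkeeping — equivalently, proving that the obstructions to the lift vanish precisely when $w_2(E)\ne0$ — is the main obstacle; when $w_2(E)=0$ the isotropy data over $\RP^2$ is forced to be trivial and the construction can no longer reproduce bundles that are non-trivial on the regular orbit.

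Many cases can be disposed of more directly, and this also provides a useful check. Writing $\CP^2=\SU(3)/\U(2)$, the total space of any homogeneous bundle $\SU(3)\times_{\U(2)}V$ is $(\SU(3)\times V)/\U(2)$, which inherits a complete metric of $\sec\ge0$ from a bi-invariant metric on $\SU(3)$ together with the flat metric on $V$. Taking $V=\operatorname{std}^{\oplus m}\oplus\det^{a_1}\oplus\dots\oplus\det^{a_l}$, the realification has $w_2\ne0$ if and only if $m+\sum a_i$ is odd, and then $p_1=\sum a_i^2-m$; varying the parameters realizes every odd value of $p_1$. Since a bundle over $\CP^2$ of rank $\ge5$ is determined by $(w_2,p_1)$, and one of rank $2$ or $3$ is, after possibly splitting off a trivial line, an orientable $2$-plane bundle $\mathcal O(a)_{\R}$, it follows that every $E$ with $w_2(E)\ne0$ and $\rank E\ne4$ is, up to a trivial summand, the realification of such a homogeneous bundle, so its total space is one of these homogeneous models times a Euclidean factor; in the subcase $p_1>0$ this is precisely the reduction of the structure group to a torus of \tref{general}. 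Thus only $\rank E=4$ genuinely requires the lifting argument, and that is where the real work — the obstruction computation at the two singular orbits — is concentrated.
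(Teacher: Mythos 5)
Your geometric skeleton is exactly the paper's: lift the cohomogeneity one $\SO(3)$ (in fact its cover $\S^3$, as in \eqref{S3prCP2}) action on $\CP^2$ to the principal $\SO(k)$-bundle, observe that the lifted cohomogeneity one manifold again has both singular orbits of codimension two because the normal bundles pull back, invoke \tref{curvature} from \cite{GZ}, and descend to the associated vector bundle by O'Neill. The genuine gap is that the only hard implication --- that a commuting lift exists for \emph{every} principal $\SO(k)$-bundle with $w_2\ne 0$ --- is precisely the step you label ``bookkeeping'' and ``the main obstacle'' and never carry out. Exhibiting $\SO(3)$-homogeneous models for $E|_{\RP^2}$ and $E|_{\text{conic}}$ only shows that \emph{some} cohomogeneity one principal bundles of the form \eqref{principal diagram} exist; the theorem requires identifying \emph{which} pairs $(w_2,p_1)$ these glued constructions realize. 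In the paper this is the content of \tref{SO3CP2} and \cref{lifts}: one lists the possible group diagrams \eqref{S3prCP2} with parameters $p_\pm$, shows via van Kampen on the two-disc-bundle decomposition that the non-spin case forces $p_-\equiv 0$, $p_+\equiv 2 \bmod 4$, identifies $|p_1(P)|$ with $|H^4(P,\Z)|$ by the spectral-sequence result \pref{HP}, computes $|H^4(P)|$ by Mayer--Vietoris over the two halves to get $p_1=\tfrac14(p_+^2-p_-^2)=4(r^2-s^2+r)+1$, checks this sweeps out all residues $1\bmod 4$ allowed by \pref{wep}, and then handles $k=4$ via \pref{so4} and $k\ge 5$ by stabilization and sums. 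None of this computation appears in your proposal, so the statement is not proved.

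Your fallback via homogeneous bundles over $\SU(3)/\U(2)$ does not close the gap, and the reduction ``only rank $4$ genuinely requires the lifting argument'' is false. A rank-$3$ oriented bundle over $\CP^2$ with $w_2\ne 0$ can have any $p_1\equiv 1 \bmod 4$ (\pref{wep}), whereas those that split as a $2$-plane bundle plus a trivial line have $p_1$ an odd square; for instance $p_1=5$ and $p_1=-3$ (the Eschenburg and Aloff--Wallach principal bundles of \cref{fixlift}) do not split, so rank $3$ is not disposed of --- it is exactly the case \tref{SO3CP2} is designed for, and the $k=4$ and $k\ge5$ cases are built on it. Similarly, at a fixed rank $k\ge 5$ your realifications of $\mathrm{std}^{\oplus m}\oplus\bigoplus\det^{a_i}$ satisfy $p_1=\sum a_i^2-m\ge -m\ge -k/2$, so bundles with $w_2\ne0$ and sufficiently negative $p_1$ at that rank are not of the form (homogeneous realification) $\oplus$ (trivial); realizing them only stably does not help, since the theorem is about the total space of the given bundle, not its stable class. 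Thus the homogeneous shortcut (essentially the torus-reduction observation of \tref{general} together with the tautological subbundle) covers only a proper subfamily, and the lifting computation remains indispensable for all ranks $k\ge 3$.
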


    In the case of $w_2=0$, our methods will show that
     half of all 3-dimensional vector bundles,
    those with $p_1 \equiv 0 \text{ mod }
    8$, admit non-negative curvature, and they all do when the fiber dimension is at least $5$.
    We do not know if the remaining bundles with $w_2=0$ admit
    nonnegative curvature, although some of them do since their structure
    group reduces to a torus.

In the more special case of complex vector bundles, we will see
that:

\begin{main}
The total space of any complex rank 2 vector bundle over $\CP^2$
admits a complete metric of nonnegative curvature if its first Chern
class $c_1$ is odd. The same is true if $c_1$ is even and the
discriminant $ \Delta = c_1^2-4c_2$ satisfies $\Delta \equiv 0
\text{ mod }8$.
\end{main}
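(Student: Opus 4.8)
\emph{Sketch of a proof strategy.} The plan is to split into two cases according to the parity of $c_1$. If $c_1$ is odd, the underlying real rank four bundle has $w_2 = c_1 \bmod 2 \neq 0$, and since its total space is the same manifold as the total space of the complex bundle, it admits a complete metric of nonnegative curvature by Theorem~A; no further work is needed. So the substance is the case in which $c_1 = 2m$ is even and $\Delta = c_1^2 - 4c_2 = 4(m^2 - c_2) \equiv 0 \pmod 8$, i.e.\ $c_2 \equiv m \pmod 2$.

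In that case I would first convert the problem into one about a principal bundle. Since rank two complex bundles over $\CP^2$ are determined by $(c_1,c_2)$, the bundle $E$ with $c_1 = 2m$ is isomorphic to $E_0 \otimes \mathcal O(m)$, where $E_0$ is the rank two bundle with $c_1(E_0)=0$ (hence of structure group $\SU(2)$) and $c_2(E_0) = c_2 - m^2$, which is \emph{even} precisely because $\Delta \equiv 0 \pmod 8$. Tensoring changes the total space, so instead I would record the structure groups: let $P \to \CP^2$ be the principal $\SU(2)$-bundle of $E_0$ and $R \to \CP^2$ the principal circle bundle of $\mathcal O(m)$. Then $E$ is the bundle associated to the principal $\SU(2) \times \Sph^1$-bundle $Q := P \times_{\CP^2} R$ via the unitary representation $(A,z)\cdot v = zAv$ on $\C^2$. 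Hence, by the connection metric construction (equip $Q \times \C^2$ with the product of an invariant metric on $Q$ and the flat metric, and take the O'Neill quotient by the free isometric diagonal $\SU(2)\times\Sph^1$-action), it suffices to exhibit an $\SU(2)\times\Sph^1$-invariant metric of nonnegative curvature on $Q$. (When $c_2(E_0)$ is in addition a negative square, this $Q$ reduces to a torus bundle and one is in the situation of \tref{general}; the point here is to go beyond that.)

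To build the invariant metric on $Q$ I would lift the structure along the Hopf fibration $\pi\colon \Sph^5 \to \CP^2$, using $\CP^2 = \Sph^5/\Sph^1$. Pulling $Q$ back gives a principal $\SU(2)\times\Sph^1$-bundle $\widetilde Q \to \Sph^5$ carrying a free $\Sph^1$-action covering the Hopf action, with $Q = \widetilde Q/\Sph^1$. The crucial topological input is that $\widetilde Q$ is \emph{trivial} as a plain principal bundle: $\pi^*R$ is trivial because $H^2(\Sph^5)=0$, and $\pi^*P$ is trivial precisely because $c_2(E_0)$ is even — for $c_1$ even, $\pi^*E$ is trivial over $\Sph^5$ if and only if $\Delta \equiv 0 \pmod 8$, the relevant $\Z_2$-invariant being detected by the composite $\Sph^5 \to \CP^2 \to \CP^2/\CP^1 = \Sph^4$, which represents the nonzero class in $\pi_5(\Sph^4)$, and the nontrivial $\SU(2)$-bundle over $\Sph^5$ being $\SU(3) \to \Sph^5 = \SU(3)/\SU(2)$. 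Thus $\widetilde Q \cong \Sph^5 \times \SU(2) \times \Sph^1$, and the remaining task is to put on it a nonnegatively curved metric invariant both under the principal $\SU(2)\times\Sph^1$-action and under the Hopf-lifted circle; O'Neill applied to $\widetilde Q \to Q$ then produces the metric on $Q$ and finishes the proof.

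The genuine obstacle is exactly this last construction — the ``lifting'' of the title. The product metric on $\Sph^5 \times \SU(2) \times \Sph^1$ is invariant under the principal action, but the Hopf-lifted circle is isometric for it only for the very special $Q$'s (those corresponding to $c_2(E_0)$ a negative square). In general one must first deform the metric, exploiting the homogeneous description $\Sph^5 = \SU(3)/\SU(2)$ together with an auxiliary compact group, so that the circle action defining $\CP^2 = \Sph^5/\Sph^1$ becomes isometric on a nonnegatively curved non-product metric while the principal action stays isometric. This is precisely the step that does not extend to bundles with $c_1$ even and $\Delta \not\equiv 0 \pmod 8$ — where $\pi^*E$ is the nontrivial bundle over $\Sph^5$ — which is why those remain open, and also why the case $c_1$ odd is handled by the separate argument of Theorem~A (which must accommodate $\pi^*E$ nontrivial over $\Sph^5$, via $\SU(3)$) rather than by this construction.
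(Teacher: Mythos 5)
The $c_1$ odd half of your argument is exactly the paper's: $w_2=c_1 \bmod 2\neq 0$ and Theorem A applies. The even case, however, rests on a topological claim that is false. The composite $\Sph^5\to\CP^2\to\CP^2/\CP^1=\Sph^4$ is \emph{null-homotopic}, not the generator of $\pi_5(\Sph^4)\cong\Z_2$: its mapping cone is $\CP^3/\CP^1$, and $Sq^2\colon H^4(\CP^3;\Z_2)\to H^6(\CP^3;\Z_2)$ vanishes (by Cartan, $Sq^2(x^2)=2x\cdot Sq^2x=2x^3=0$ since $Sq^1x=0$), whereas the cone of $\eta_4$ has nontrivial $Sq^2$; so the attaching class is $2\eta=0$. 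Consequently every $\SU(2)$-bundle over $\CP^2$ becomes trivial when pulled back to $\Sph^5$, regardless of the parity of $c_2(E_0)$ (equivalently: every such bundle extends over $\CP^3$, in line with the Atiyah--Rees classification, which allows $c_1=0$ and arbitrary $c_2$ on $\CP^3$). Thus the dichotomy $\Delta\equiv 0$ versus $4 \bmod 8$ is invisible over $\Sph^5$ and cannot be what singles out the bundles of Theorem B; indeed, if triviality of $\widetilde Q$ were the operative condition, your scheme would apply equally to $\Delta\equiv 4\bmod 8$, which the paper leaves open. On top of this, the step you yourself call ``the genuine obstacle'' --- producing a nonnegatively curved metric on $\widetilde Q\cong\Sph^5\times\SU(2)\times\Sph^1$ invariant under the principal action \emph{and} under the twisted lift of the Hopf circle --- is not carried out at all, and that is precisely the entire difficulty; as written, the argument yields nothing beyond the torus-reducible bundles already covered by \tref{general}.

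The paper's actual route is different and is where the mod $8$ condition really enters. One passes to the underlying principal $\SO(4)$-bundle and its two $\SO(3)$-quotients $P^\pm$ (\pref{so4}, \lref{cover}(c)); for a complex bundle $P^+$ reduces to $\SO(2)$, so it always admits a commuting lift, while by \eqref{chern} $p_1(P^-)=c_1^2-4c_2=\Delta$ and $w_2=c_1\bmod 2$. Then \tref{SO3CP2} and \cref{lifts} show that the cohomogeneity one $\SO(3)$-action on $\CP^2$ lifts to $P^-$ when $w_2\neq 0$, or when $w_2=0$ and $p_1\equiv 0\bmod 8$; the congruence comes from $p_1=\tfrac14(p_+^2-p_-^2)$ with $p_\pm\equiv 2\bmod 4$ in the spin case, computed by identifying $|p_1|$ with $|H^4(P)|$ via \pref{HP} and Mayer--Vietoris. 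The lifted bundle is then a cohomogeneity one manifold with both singular orbits of codimension two, so \tref{curvature} provides an invariant nonnegatively curved metric, and the associated $\C^2$-bundle inherits one by O'Neill. To salvage your approach you would have to locate the mod $8$ restriction inside the metric construction on $\widetilde Q$ itself; it does not appear at the level of the topology of $\widetilde Q$.
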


When $c_1$ is even, this is again half of all possible complex
vector bundles since in general $\Delta \equiv 0 \text{ mod }4 $.
Although we do not know an explicit connection, it is tantalizing to
observe that in the classification of (stable) holomorphic vector
bundles over $\CP^2$, see \cite{OSS}, one also has the same division
into $c_1$ even and odd ($c_1$ odd being easier) and that the parity
of $\Delta/4$ in the case $c_1$ even is equally important.

\smallskip

In \cite{Ri}, it was shown that stably every vector bundle over
$\Sph^n$ admits a complete metric of nonnegative curvature. Our
analysis  yields the same claim for vector bundles  over each of
$\CP^2$ , $\Sph^2\times \Sph^2$, and $\CP^2 \# {-}\CP^2$, in fact
they admit such a metric as long as the fiber dimension is at least
 six.

\smallskip

Our results rely on constructing invariant
metrics of nonnegative curvature on principal bundles, and then get
the desired metrics on the associated bundles from the well known
curvature increasing property of Riemannian submersions. This of
course also implies that the associated sphere bundles over $\CP^2$
admit nonnegative curvature as well, giving rise to an interesting
new class of compact examples with nonnegative curvature.

\smallskip

In general it is a very difficult problem to decide which principal
bundles over nonnegatively curved manifolds admit metrics with
nonnegative curvature.  A general construction of principal bundles
over manifolds of cohomogeneity one, i.e. $\G$-manifolds with
one-dimensional orbit space, was found in \cite{GZ} (cf. section 1).
There it was also shown that a large class of \coo\ manifolds, the
ones where the singular orbits have codimension two, carry metrics
of nonnegative curvature, giving rise to such metrics on all
principal bundles over $\Sph^4$.

\smallskip

Our point of departure here is that, as in the case of $\Sph^4$,
each of the manifolds  $\CP^2$ , $\Sph^2\times \Sph^2$, and $\CP^2
\# - \CP^2$ support cohomogeneity one actions with singular orbits
of codimension two. Therefore so do all the principal bundles
constructed by the \emph{cohomogeneity one method} alluded to above.
It remains to determine which bundles one gets this way, a
topological problem which is considerably more involved than the
corresponding one for bundles over $\Sph^4$ solved in \cite{GZ}. One
can formulate this problem in purely topological terms as follows:

\begin{problem*}
Given a principal $\L$-bundle $P \to M$ over a $\G$-manifold $M$.
When does  the action of $\G$ on $M$ lift to an action of $\G$, or
possibly a cover of $\G$, on the total space $P$, such that the lift
commutes with $L$.
\end{problem*}

We will refer to such a lift as a \emph{commuting lift}. This
problem has been studied extensively, see e.g. \cite{HH}, \cite{HY},
\cite{La}, \cite{PS}, \cite{St}, \cite{TD} and references therein.
However, apart from the general result \cite{PS} that every action
of a semi simple group admits a commuting lift to the total space of
every principal circle or more generally torus bundle, the results
seem to be difficult to apply in concrete cases.

With this terminology  we showed in \cite{GZ} that the cohomogeneity one action of $\SO(3)$
on $\Sph^4$ admits a commuting lift to every principal $\SO(k)$
bundle over $\Sph^4$. In contrast, we will show that the
cohomogeneity one action of $\SO(3)$ on $\CP^2$ does not lift to
every principal bundle over $\CP^2$, giving rise to the exceptions
in Theorem A and B. More precisely, in this language, the
topological main result behind Theorem A and B can be formulated as:

\begin{main}
Let $P \to \CP^2$ be a principal $\SO(3)$-bundle. The cohomogeneity
one $\SO(3)$ subaction of the standard $\SU(3)$-action on $\CP^2$
admits a commuting lift to $P$ if and only if one of the following
holds:

\begin{itemize}
\item[a)]The principal bundle is not spin, i.e. $w_2(P)\ne 0$.
\item[b)] The bundle is spin and $p_1(P) \equiv 0 \text{ mod } 8$.
\item[c)] The Pontryagin class satisfies $p_1(P)=4r^2 $ for some integer $r>0$.
\end{itemize}
\end{main}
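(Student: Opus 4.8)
The plan is to use the cohomogeneity one bundle construction recalled in Section~1 (see also \cite{GZ}) to translate the problem into an enumeration of isotropy homomorphisms together with a computation of characteristic classes, after which the statement becomes elementary number theory. I would begin by recording the group diagram of the given action. The $\SO(3)$-invariant function $[z]\mapsto|z_1^2+z_2^2+z_3^2|\,/\,|z|^2$ on $\CP^2$ has exactly two critical values, whose level sets are the set of real points $\RP^2$ and the complex conic $\{z_1^2+z_2^2+z_3^2=0\}\cong\Sph^2$; these are the two singular orbits, both of codimension two, so the diagram is
\[
\G=\SO(3)\ \supset\ \Km=\orth12,\quad\Kp=\SO(2)\ \supset\ \H=\Z_2 ,
\]
where the principal isotropy $\H$ is generated by a rotation by $\pi$, lying in $\Kp=\SO(2)$ as its subgroup of order two and in $\Km\cong\O(2)$ in the component of nontrivial determinant. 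By the general correspondence of Section~1, a commuting lift of this action of $\SO(3)$, or of its cover $\SU(2)$, to a principal $\L=\SO(3)$-bundle $P\to\CP^2$ exists if and only if $P$ is equivariantly isomorphic to a bundle $P(\phi^-,\phi^+)$, where $\phi^\pm\colon\Kpm\to\L$ (respectively homomorphisms of the $\SU(2)$-preimages of $\Kpm$) are such that $\phi^-|_\H$ and $\phi^+|_\H$ are conjugate in $\L$, two such pairs giving equivalent bundles exactly when related by the evident equivalence. It therefore suffices to compute the set of pairs $(w_2,p_1)$ realized by the bundles $P(\phi^-,\phi^+)$ over all admissible pairs and to match it with a)--c); in particular the ``only if'' direction will then be automatic.

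Next comes the enumeration of homomorphisms up to conjugacy. The homomorphisms $\SO(2)\to\L=\SO(3)$ are the maps $\mu_m$, $m\ge0$, taking a rotation by $\theta$ to a rotation by $m\theta$; the homomorphisms $\orth12\cong\O(2)\to\L$ are the trivial one, the homomorphism $\psi$ with image $\Z_2$, and $\phi_k=\rho_k\oplus 1_-$ for $k\ge1$ (here $\rho_k$ is the $k$-fold rotation representation and $1_-$ the determinant character); one checks that every homomorphism of the $\SU(2)$-preimage of $\Km\cong\O(2)$ into $\SO(3)$ still factors through $\O(2)$. Since $\H\cong\Z_2$ has $\SU(2)$-preimage $\Z_4$, the conjugacy requirement on the restriction to $\H$ reduces to a parity condition: with $\phi^+=\mu_m$ one needs $m$ even when $\phi^-$ is trivial and $m$ odd when $\phi^-\in\{\psi\}\cup\{\phi_k\colon k\ge1\}$, and passing to $\SU(2)$ adds no further bundles. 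Put $a_+=m$, and $a_-=k$ if $\phi^-=\phi_k$, $a_-=0$ if $\phi^-$ is trivial or $\psi$.

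The heart of the argument is then the computation
\[
w_2\bigl(P(\phi^-,\phi^+)\bigr)\ne0\iff\phi^-=\psi\ \text{ or }\ \phi^-=\phi_k\text{ with }k\text{ even},
\qquad
p_1\bigl(P(\phi^-,\phi^+)\bigr)=a_+^2-a_-^2 ,
\]
the sign in the second formula being fixed by the standard orientation of $\CP^2$. For this I would use the cohomogeneity one splitting $\CP^2=\D(\nu_-)\cup\D(\nu_+)$ into the disk bundles of the normal bundles of the two singular orbits, glued along $\G/\H$, together with the isomorphism $H^2(\CP^2;\Z_2)\cong H^2(\RP^2;\Z_2)$ and the classical identification $\nu_-\cong T\RP^2$ (so $w_1(\nu_-)\ne0$ and $w_2(\nu_-)\ne0$). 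The class $w_2(P)$ is then the image in $H^2(\RP^2;\Z_2)$ of the Stiefel--Whitney class of the $\O(2)$-representation $\phi^-$, which is routine to evaluate case by case. For $p_1$ one works in $H^4(\CP^2)\cong H^4(\D(\nu_\pm),\partial)$ and computes the relative Pontryagin class of the disk-bundle piece $\G\times_{\Kpm}\bigl(\D^2\times_{\phi^\pm}\L\bigr)$ over each singular orbit; a Thom-isomorphism argument expresses each as a squared Euler number on the base surface, and the two pieces combine to $a_+^2-a_-^2$. This normalization-and-sign bookkeeping is where essentially all the work lies; it is the analogue for $\CP^2$ of the $\pi_3(\SO(k))$-computation carried out for $\Sph^4$ in \cite{GZ}, and it is genuinely harder because of the $2$-torsion in $H^*(\RP^2;\Z)$ and the non-orientability of $\nu_-$.

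Granting these formulas, the theorem follows by elementary number theory. If $\phi^-=\psi$ or $\phi_k$ with $k$ even, then $a_-$ is even and $a_+$ is odd; writing an arbitrary $N\equiv1\bmod4$ as a product $N=d_1d_2$ of odd integers (with suitable signs) and taking $a_\pm=(d_2\pm d_1)/2$ shows that $p_1=a_+^2-a_-^2$ realizes every integer $\equiv1\bmod4$. Since $w_2(P)\ne0$ forces $p_1(P)\equiv1\bmod4$ for any principal $\SO(3)$-bundle over $\CP^2$, this is exactly case a). If $\phi^-$ is trivial, then $a_-=0$ and $a_+$ is even, so $p_1=a_+^2$ ranges over $\{4j^2\colon j\ge0\}$; and if $\phi^-=\phi_k$ with $k$ odd, then $a_-$ and $a_+$ are both odd and $p_1=a_+^2-a_-^2$ ranges over all of $8\Z$. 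As $\{4j^2\colon j\ge0\}\cup 8\Z=8\Z\cup\{4r^2\colon r>0\}$, the spin bundles admitting a commuting lift are precisely those in cases b) and c), which completes the proof.
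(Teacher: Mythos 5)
Your reduction of the problem is the same as the paper's: via the Principal Bundle Lemma you enumerate the admissible pairs of homomorphisms on $\Km\cong\O(2)$ and $\Kp\cong\SO(2)$ (your observation that every homomorphism of the $\mathrm{Pin}(2)$-preimage of $\Km$ into $\SO(3)$ factors through $\O(2)$, so that passing to $\SU(2)$ adds nothing, is correct and matches the constraints $p_-$ even, $p_+\equiv 2 \bmod 4$ in \eqref{S3prCP2}), and your claimed invariants agree with \tref{SO3CP2} under $a_\pm=p_\pm/2$; the concluding number theory is the same as in \cref{lifts} together with the torus-reducible family giving case c). So the skeleton and the endgame are right.

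The genuine gap is that the central quantitative step — $w_2(P)\ne 0$ exactly in the stated cases and, above all, $p_1(P)=a_+^2-a_-^2$ with the correct normalization and sign — is asserted rather than proved. Your sketch ("a Thom-isomorphism argument expresses each piece as a squared Euler number on the base surface, and the two pieces combine") glosses over exactly the issues that constitute the theorem: $p_1(P)$ does not localize to the two disc-bundle pieces without a common reference over the gluing locus $\G/\H=\Sph^3/\Z_4$, where $P$ is only flat and relative contributions acquire correction terms; over $B_-=\RP^2$ the normal bundle is non-orientable, so the "Euler number" lives in twisted coefficients; and the contributions must be weighted by the normal data of the singular orbits (the conic $B_+$ has self-intersection $4$, while $\nu_-\cong T\RP^2$), which is precisely where the factor producing $a_\pm^2$ rather than, say, $(2a_+)^2$ has to come from. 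Even the sign is not free: you appeal to the orientation convention, whereas some argument is needed (the paper pins it by $p_1\equiv 1\bmod 4$ when $w_2\ne0$, and by the $\Sph^4$ comparison in the spin case). The paper's actual proof of these formulas is quite different: in the spin case it lifts the diagram to an $\S^3$-bundle and identifies it as the pullback of a bundle over $\Sph^4$ under the two-fold branched cover $\CP^2\to\Sph^4$, importing the computation of \cite{GZ} and using that the cover has degree two on $H^4$; in the non-spin case it computes $|p_1|=|H^4(P)|$ via \pref{HP} (spectral sequence of $\SO(3)\to P\to \CP^2$) together with a Mayer--Vietoris computation on the total space using \lref{orbits}. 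Until you supply a complete substitute for this computation, the proposal is a plausible plan, not a proof — as you yourself note, "essentially all the work" lies in the step you have left open.
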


\smallskip

\no More generally, our methods address the question which principal
$\SO(k)$ bundles over each simply connected cohomogeneity one
4-manifold  admit a commuting lift. It will follow, e.g., that the
above $\SO(3)$ action lifts to every principal $\SO(k)$ bundle with
$k\ge 5$. We will be able to answer this question almost completely.
There is only one $\Z_2$ ambiguity left as to whether the \co\
action by $\SU(2)$ on $\CP^2$ which has a fixed point lifts to an
 $\SO(k)$ principal bundle with $k\ge 5$, see
\tref{Fix}. We will see that in general, the lifting problem for
$\SO(4)$ bundles can be reduced to $\SO(3)$ bundles (see section 1).

\smallskip
We should mention that the manifold $\CP^2 \#  \CP^2$, according to
\cite{Pa} (cf. \cite{Ho}), does not admit any cohomogeneity one
action and hence the methods in this paper will not apply in this
case.

\bigskip

The paper is organized as follows. In section one we briefly recall
the basic properties of cohomogeneity one manifolds needed in our
paper, the principal bundle construction, and its characterization
in terms of the existence of commuting lifts. In section two, we
describe the cohomogeneity one actions on simply connected four
manifolds. Section three is devoted to the topological
classification of principal $\SO(k)$ bundles over 4-manifolds in
terms of invariants computable in our context. In sections four and
five  we derive which principal $\SO(k)$ bundles over any given
1-connected, 4-dimensional cohomogeneity one manifold admits a
commuting lift. Specifically, section four is devoted to the classification
over cohomogeneity one manifolds with singular orbits of codimension
two needed for the geometric consequences of the paper, and section five deals with the
classification for
cohomogeneity one actions where at most one orbit has codimension two.

\smallskip

It is our pleasure to thank V.Kapovitch,  N. Kitchloo, I. Madsen, J. Shaneson, and B.
Wilking for helpful discussions. This work was completed while the
second author was visiting IMPA and he would like to thank the
Institute for its hospitality.

\section{Basic set up}

Throughout the paper, we will make extensive use of the structure of
cohomogeneity one manifolds, which we briefly recall here for
convenience. For more details we refer to e.g. \cite{AA,Br,GZ,Mo}.

A connected manifold $M$ is said to have cohomogeneity one if it
supports a smooth action by a compact Lie group $\G$, such that the
orbit space $M/\G$ is one-dimensional. Here we are only interested
in the case where $M$ is compact and simply connected, and $\G$ is
connected. In this case $M/\G$ is an interval, and the non-principal
orbits are singular (of codimension at least two) and correspond
exactly to the end points of $M/\G$.

Fix an auxiliary $\G$-invariant metric on $M$, such that $M/\G = [-1,1]$
 isometrically, and let $c$ be a geodesic perpendicular
to all orbits. We denote by $\H$ the principal isotropy group
$\G_{c(0)}$ at $c(0)$, which is equal to the isotropy groups
$\G_{c(t)}$ for all $t\ne\pm 1$, and by $\Kpm$ the isotropy groups
at $c(\pm 1)=x_\pm$. In terms of this we have
\begin{equation}\label{discs}
M = \G \times_{\Km} \Dm \cup  \G \times_{\Kp} \Dp = M_- \cup M_+
\end{equation}

\no where  $\Dpm$ denotes the normal disc to the orbit $Gx_\pm =
\G/\Kpm = B_\pm$ at $x_\pm$, and the gluing is done along
$M_0=M_-\cap M_+=\G/\H$ with the identity map. It is important to
note that $\Sphpm =
\partial \Dpm = \Kpm/\H$, and that the diagram of groups

\begin{equation}\label{diagram}
\begin{split}
         \xymatrix{
            & {\G} & \\
            {\Km}\ar@{->}[ur]^{j_-} & & {\Kp}\ar@{->}[ul]_{j_+} \\
            & {\H}\ar@{->}[ul]^{i_-}\ar@{->}[ur]_{i_+} &
            }
\end{split}
\end{equation}

\no which we also record as $\H \subset \{\Km,\Kp\} \subset \G$,
determine $M$. Conversely, such a \emph{group diagram} with $\Kpm/\H
= \Sph^{l_{\pm}}$, defines a cohomogeneity one $\G$-manifold, given
by \eqref{discs}. The action of $G$ on $M$ is given by left
multiplication in the first component on each half and one easily
checks that this action has isotropy groups as in \eqref{diagram}.

Notice though that the description of $M$ by a group diagram
 depends on the choice of a $G$ invariant metric.  The
description of all group diagrams coming from a different choice of
a metric, or equivalently  the equivariant diffeomorphism
classification of such $\G$ manifolds, is given by:
 (cf. \cite{GWZ},
\cite{Ne},\cite{Br}):

\begin{lem}\label{equiv}
If $\H \subset \{\Km,\Kp\} \subset \G$  defines  a cohomogeneity one manifold, then
the only cohomogeneity one $\G$-manifolds which are equivariantly diffeomorphic to
it are of the form $a\H a^{-1} \subset \{ a\Km a^{-1}  , na\Kp
(na)^{-1} \} \subset\G $ for some $a\in\G$ and $n\in\N(\H)_{\subo}$.
\end{lem}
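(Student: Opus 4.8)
The plan is to verify both implications. The ``if'' direction is the easy one: I would realize each of the two modifications of the group diagram by an explicit equivariant diffeomorphism. Replacing $\H\subset\{\Km,\Kp\}\subset\G$ by the conjugate diagram $a\H a^{-1}\subset\{a\Km a^{-1},a\Kp a^{-1}\}\subset\G$ is realized on each half of \eqref{discs} by the map $[g,v]\mapsto[ga^{-1},v]$, which is well defined, $\G$-equivariant for left multiplication, and compatible with the identity gluing along $M_0=\G/\H$. Replacing only $\Kp$ by $n\Kp n^{-1}$ with $n\in\N(\H)_0$ amounts to regluing $M_-$ to $M_+$ along $M_0=\G/\H$ by the $\G$-equivariant diffeomorphism $g\H\mapsto gn\H$ in place of the identity; absorbing this regluing map into $M_+$ converts $\Kp$ into $n\Kp n^{-1}$ (an easy check with the normal representation, letting $\Kp$ act on $\Dp$ through $n(\,\cdot\,)n^{-1}$), while a path from $e$ to $n$ inside $\N(\H)_0$ exhibits $g\H\mapsto gn\H$ as equivariantly isotopic to the identity of $\G/\H$, so the glued manifold is unchanged up to equivariant diffeomorphism. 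Composing the two moves yields the family in the statement.

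For the converse, suppose $F\colon M\to M'$ is an equivariant diffeomorphism, with $M$ assembled from the diagram $\H\subset\{\Km,\Kp\}\subset\G$ via \eqref{discs}. The decisive observation I would use is that $F$, being honestly $\G$-equivariant, satisfies $\G_{F(p)}=\G_p$ for all $p$ and so preserves isotropy groups exactly. Hence, if $c$ is the normal geodesic that produced the diagram of $M$, the curve $\gamma:=F\circ c$ in $M'$ is a section over $M'/\G=[-1,1]$, transverse to the orbits, running between the two singular orbits, with interior isotropy the \emph{fixed} subgroup $\H$ and endpoint isotropies the \emph{fixed} subgroups $\Km,\Kp$: in short, $\gamma$ is a ``slicing curve'' of $M'$ displaying, verbatim, the group diagram of $M$. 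It therefore suffices to show that any two slicing curves of one fixed compact simply connected cohomogeneity one $\G$-manifold produce group diagrams related by the moves of the statement. (If the two curves traverse the orbit space in opposite directions, precomposing one with $t\mapsto-t$ merely interchanges the two singular isotropy groups, which the unordered bracket $\{\,,\,\}$ absorbs; so one may assume they traverse it in the same direction.)

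So let $c_1,c_2$ be slicing curves of a manifold $N$, taken as sections of $N\to N/\G=[-1,1]$ with $c_i(\pm1)$ in the singular orbits $B_\pm$ and with constant interior isotropy. Since $c_1(0)$ and $c_2(0)$ lie on one orbit, there is $a\in\G$ with $a\cdot c_2(0)=c_1(0)$; replacing $c_2$ by $a\cdot c_2$, a conjugation of its diagram by $a$, one may assume $c_2(0)=c_1(0)$, and set $\H:=\G_{c_1(0)}$. Using $c_1$ to identify the principal part of $N$ equivariantly with $\G/\H\times(-1,1)$ so that $c_1(t)=(e\H,t)$, one gets $c_2(t)=(\eta(t)\H,t)$ for a smooth $\eta\colon(-1,1)\to\G$ with $\eta(t)\in\N(\H)$ for every $t$ (since $c_2$ has interior isotropy exactly $\H$) and $\eta(0)\in\H$ (since $c_2(0)=c_1(0)$). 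Because $(-1,1)$ is connected and $\eta(0)\in\H\subseteq\N(\H)$, the path $\eta$ lies entirely in $\N(\H)_0\H$, the union of those components of $\N(\H)$ that meet $\H$; this is a compact subgroup of $\G$. Near $t=1$, transversality of $c_2$ and the slice theorem write $c_2(t)=\eta(t)\cdot[e,r(t)w_0]$ in the normal-disc chart $\G\times_{\Kp}\Dp$ about $B_+$, with $r(t)\to0$; letting $t\to1$ shows $c_2(1)$ is a limit of points $\eta(t)\cdot x_+$, hence lies in the closed $\N(\H)_0\H$-orbit of $x_+$, where $\G_{x_+}=\Kp$. Writing $c_2(1)=\xi\cdot x_+$ with $\xi=nh$, $n\in\N(\H)_0$, $h\in\H\subseteq\Kp$, gives $\G_{c_2(1)}=\xi\Kp\xi^{-1}=n\Kp n^{-1}$, and likewise $\G_{c_2(-1)}=n_-\Km n_-^{-1}$ for some $n_-\in\N(\H)_0$; put $n_+:=n$. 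Hence the diagram read off from $c_2$ (after the conjugation by $a$) is $\H\subset\{\,n_-\Km n_-^{-1},\,n_+\Kp n_+^{-1}\,\}$ with $n_\pm\in\N(\H)_0$, and one more conjugation, by $n_-^{-1}\in\N(\H)_0$, brings it to $\H\subset\{\,\Km,\,m\Kp m^{-1}\,\}$ with $m:=n_-^{-1}n_+\in\N(\H)_0$. Unwinding the conjugations presents the diagram of $c_2$ relative to that of $c_1$ in exactly the asserted form.

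The step I expect to be the genuine obstacle is the bookkeeping of \emph{which components} of the normalizers appear in the previous paragraph. Sliding the interior isotropy group of a slicing curve around by conjugation is cheap; what has content is that, after doing so, the residual discrepancy between the two singular isotropy groups is effected by elements of the \emph{identity component} $\N(\H)_0$ --- not merely of $\N(\H)$ --- and that one singular label can then be conjugated away outright. This is precisely what connectedness of $(-1,1)$ buys (the path $\eta$ cannot escape the component of $\N(\H)$ through $\H$), together with the inclusions $\H\subseteq\Kpm$, which make the $\H$-factor of $\xi$ disappear after conjugation. The accompanying analytic point --- that a slicing curve approaches each singular orbit radially, so that $\eta$ restricted to $[0,1)$, hence $\xi$, is genuinely controlled --- is routine from the slice theorem but should be written out carefully.
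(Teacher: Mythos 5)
The paper itself gives no proof of \lref{equiv} (it is quoted from \cite{GWZ}, \cite{Ne}, \cite{Br}), and your argument is the standard one from those sources and is correct: regluing $M_\pm$ along $\G/\H$ by $g\H\mapsto gn\H$ together with an equivariant isotopy for the ``if'' direction, and for the converse transporting the defining normal geodesic by the equivariant diffeomorphism (which preserves isotropy groups), writing the second section as $\eta(t)\cdot c_1(t)$ with $\eta(t)\in\N(\H)$, using connectedness to force $\eta$ into $\N(\H)_0\H$, passing to the limit on the compact orbit $\N(\H)_0\H\cdot x_\pm$, absorbing the $\H$-factors via $\H\subset\Kpm$, and conjugating by $n_-^{-1}$ to normalize the $\Km$-side. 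The only cosmetic point is the same one already present in the statement: the conjugating element on the $\Kp$-side should be read as $an$ with $n\in\N(\H)_0$ (equivalently $n'a$ with $n'\in\N(a\H a^{-1})_0$), which is exactly the form your unwinding produces.
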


\smallskip



Next we recall from \cite{GZ}, that the above characterization of
cohomogeneity one manifolds also allows for a natural construction
of principal bundles within this category.

Let $\L$ be any compact Lie group, and $M$ a \co\ one manifold with
group diagram $\H\subset \{\Km,\Kp\}\subset \G$,  where $\G$ is
allowed to act ineffectively.

For any Lie group homomorphisms $\phi_\pm : \Kpm \to \L$ with
$\phi_+\circ i_+ = \phi_-\circ i_- = \phi_0$, let $P$ be the \co\
one $\L\times \G$-manifold with diagram

\begin{equation}\label{principal diagram}
\begin{split}
         \xymatrix{
            & {\L \times \G} & \\
            {\Km}\ar@{->}[ur]^{(\phi_-,j_-)} & &
{\Kp}\ar@{->}[ul]_{(\phi_+,j_+)} \\
            & {\H}\ar@{->}[ul]^{i_-}\ar@{->}[ur]_{i_+} &
            }
\end{split}
\end{equation}


\no The crucial property of our \co\ construction is the following
characterization:

\begin{lem}[Principal Bundle Lemma]\label{principal bundle}
Let $M$  be the \co\ $\G$-manifold defined by \eqref{diagram}.
\begin{itemize}
\item[(a)] If $P$ is a \com\ defined by
\eqref{principal diagram}, then  $\L$ acts freely on $P$, and
the quotient $P/\L$ with its induced action by $\G$ is
equivariantly diffeomorphic to $M$.
\item[(b)] Conversely, a
principal $\L$-bundle $\pi\colon P\to M$  admits a lift by $\G$,
or possibly a cover of $\G$,  that commutes with $\L$, if and
only if $P$ can be described as a cohomogeneity one $\L \times
\G$-manifold defined as in \eqref{principal diagram}.
\end{itemize}
\end{lem}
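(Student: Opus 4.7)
The plan is to verify both parts directly from the cohomogeneity one tube decomposition \eqref{discs}, treating part (a) as a construction-and-check and part (b) as an identification of the isotropy data of a commuting $(\L\times\G)$-action as graphs of homomorphisms into $\L$.

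For part (a), I would write
\begin{equation*}
P \;=\; (\L\times\G)\times_{\Km}\Dm \;\cup\; (\L\times\G)\times_{\Kp}\Dp,
\end{equation*}
where $\Kpm$ acts on $\L\times\G$ via $(\phi_\pm,j_\pm)$, and let $\L$ act by left multiplication in the first factor. This clearly commutes with the $\Kpm$-action used to form the twisted product, so it descends to each half and agrees on the overlap $(\L\times\G)/\H$. To see it is free at a point $[(l,g),v]\in (\L\times\G)\times_{\Kpm}\Dpm$, a stabilizer element $l'\in\L$ would produce $k\in\Kpm$ with $(l'l,g)=(l\phi_\pm(k),g j_\pm(k))$ and $k\cdot v=v$; the $\G$-coordinate gives $j_\pm(k)=e$, and since $(\phi_\pm,j_\pm)$ is the inclusion of a subgroup (hence injective) this forces $k=e$ and then $l'=e$. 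The quotient map $P\to P/\L$ is computed half by half by choosing the normalized representative $l=e$, which identifies $\L\backslash\big((\L\times\G)\times_{\Kpm}\Dpm\big)$ with $\G\times_{\Kpm}\Dpm=M_\pm$; these identifications glue along $\G/\H$ to recover $M$, and the resulting projection is manifestly $\G$-equivariant. Local triviality of $P\to M$ follows from the standard slice description around each principal orbit.

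For part (b), the forward direction is a special case of (a). For the converse, assume a lift of $\G$ (or of a cover $\tilde\G$) acts on $P$ commuting with $\L$. Fix a normal geodesic $c$ in $M$ as in the setup, and choose a lift $\tilde c\subset P$. Because $\tilde\G$-orbits in $P$ project onto $\G$-orbits in $M$ and $\L$ acts freely on fibers, the $(\L\times\tilde\G)$-action is again cohomogeneity one with quotient interval $M/\G$. The key observation is that since $\L$ commutes with $\tilde\G$, the $\tilde\G$-action preserves $\L$-fibers; therefore at a point $p\in P$ over $x\in M$ and any $g\in\tilde\G_x$ there is a unique $l(g)\in\L$ with $g\cdot p=l(g)^{-1}\cdot p$, defining a homomorphism $\phi_x\colon\tilde\G_x\to\L$. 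The $(\L\times\tilde\G)$-isotropy at $p$ is then precisely the graph
\begin{equation*}
(\L\times\tilde\G)_p \;=\; \{(\phi_x(g),g)\;:\;g\in\tilde\G_x\}.
\end{equation*}
Applying this at the principal point $\tilde c(0)$ and the singular points $\tilde c(\pm 1)$ produces homomorphisms $\phi_0\colon\H\to\L$ and $\phi_\pm\colon\Kpm\to\L$, and consistency of the graph description along $\tilde c$ yields $\phi_\pm\circ i_\pm=\phi_0$. This exhibits $P$ as the cohomogeneity one $\L\times\tilde\G$-manifold with diagram \eqref{principal diagram}.

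The main obstacle will be bookkeeping, not conceptual: one must pin down conventions so that the commuting $\L$- and $\tilde\G$-actions really force the isotropies to be graphs (if the actions are on the same side the argument collapses), and must check that the homomorphism $\phi_x$ depends on the chosen basepoint $p$ only up to inner conjugation in $\L$, which is precisely the ambiguity absorbed by \lref{equiv}. A secondary subtlety is the ineffectiveness allowed in $\G$: by passing to $\tilde\G$ one ensures the combined maps $(\phi_\pm,j_\pm)\colon\Kpm\hookrightarrow\L\times\tilde\G$ embed as closed subgroups, which is exactly what makes the diagram \eqref{principal diagram} define a manifold in the first place.
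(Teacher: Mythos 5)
Your overall strategy matches the paper's. For part (a) you argue from the explicit tube decomposition
\begin{equation*}
P = (\L\times\G)\times_{\Km}\Dm \;\cup\; (\L\times\G)\times_{\Kp}\Dp ,
\end{equation*}
verify freeness pointwise, and compute the quotient half by half; the paper instead notes abstractly that a normal subgroup of $\L\times\G$ acts freely on a homogeneous space $(\L\times\G)/\K$ if and only if it meets $\K$ trivially, and then uses a normal geodesic in an invariant metric to identify the $\G$-action on $P/\L$ with the diagram \eqref{diagram}. Both are correct and amount to the same thing; your version buys concreteness, the paper's buys a faster freeness check. One small slip in your argument: once you have $j_\pm(k)=e$, the conclusion $k=e$ follows from the injectivity of $j_\pm$ alone (this is what the paper flags as being ``injective in the second component''), not from the injectivity of the pair $(\phi_\pm,j_\pm)$, which is a strictly weaker hypothesis and would not by itself give $k=e$ from $j_\pm(k)=e$.

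Part (b) captures the key idea --- that freeness of the commuting $\L$-action forces the $\L\times\tilde\G$-isotropy at $p$ over $x$ to be a graph $\{(\phi_x(g),g): g\in\tilde\G_x\}$ --- but there is a real gap in the step ``consistency of the graph description along $\tilde c$ yields $\phi_\pm\circ i_\pm=\phi_0$.'' For an arbitrary lift $\tilde c$ of $c$, the isotropy group at $\tilde c(t)$ for $t\in(-1,1)$ need not be constant: the point $\tilde c(t)$ can wander over the fiber, so $\phi_{\tilde c(0)}$ and $\phi_{\tilde c(\pm 1)}|_{\H}$ are a priori related only by $\L$-conjugation, not equality, and one does not immediately get the compatibility $\phi_\pm\circ i_\pm=\phi_0$ needed to produce the diagram \eqref{principal diagram}. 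The paper closes this gap by choosing an $\L\times\tilde\G$-invariant metric on $P$ for which $\pi\colon P\to M$ is a Riemannian submersion onto the chosen metric on $M$; then the horizontal lift $\tilde c$ of $c$ is automatically a geodesic normal to all $\L\times\tilde\G$-orbits, the isotropy along $\tilde c\vert_{(-1,1)}$ is literally constant, equal to $\tilde\H=\{(\phi_0(h),h):h\in\H\}$, and the inclusions $\tilde\H\subset\tilde\Kpm$ together with freeness of $\L$ force $\phi_\pm\vert_\H=\phi_0$. You flag the conjugation ambiguity in your closing paragraph, but the specific device --- a horizontal geodesic in a submersion metric --- is the content that turns that remark into a proof.
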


\begin{proof}
First observe that for a left action of $\L\subset \G$ on any
homogeneous space $\G/\H$, the isotropy groups are given by
$\L_{g\H}=\L\cap g\H g^{-1}$, and thus if $\L$ is normal in $\G$, it
acts freely if and only if $\L\cap \H=\{e\}$. Applying this to each
$\L\times \G$ orbit in $P$ shows that $\L$ acts freely on $P$ since
the embeddings $(\phi_\pm,j_\pm)$  are injective in the second
component. Since $\G$ is also a normal subgroup, it induces an
action on the quotient $P/\L$. Let $c(t),\ t\in [-1,1]$ be a
geodesic in an $\L\times \G$ invariant metric on $P$ normal to all
$\L\times \G$ orbits, and with isotropy groups as in
\eqref{principal diagram}. Then $c$ is a horizontal geodesic under
the projection $\pi\colon P\to P/\L$ and hence  $\pi\circ c(t)$ is a
geodesic in $M$ normal to all $\G$ orbits. Furthermore, $\pi$ is
$\G$ equivariant and since $\L$ acts transitively on the fibers of
$\pi$, it follows that $g\pi(c(-1))=\pi(c(-1))$ if and only if there
exists an $\ell\in \L$ with  $(\ell,g)c(-1)=c(-1)$ and thus
$(\ell,g)=(\phi_-(k),j_-(k))$ for some $k\in\Km$ . Similarly for
$c(1)$ and $c(0)$, which implies that \eqref{diagram} is the group
diagram for the $\G$ action on $P/\L$ and hence $P/\L$ is
$\G$-equivariantly diffeomorphic to $M$.

To prove (b), assume  there exists a cover $\sigma \colon
\tilde{\G}\to \G$ and an $\L\times\, \tilde{\G}$ action on $P$ such
that $g\cdot p=\sigma(g)\cdot\pi(p)$ for all $g\in\tilde{\G}$ and
$p\in P$. Since $P/(\L\times\, \tilde{\G})=M/\G$, the action of
$\L\times \tilde{\G}$ is \coo. We can define an ineffective action
by $\tilde{\G}$ on $M$ with isotropy groups
$\tilde{\K}^\pm=\sigma^{-1}(\Kpm)$ and $\tilde{\H}=\sigma^{-1}(\H)$
and embeddings $\tilde{j}_\pm$. For simplicity, we denote this
action again by $\G$ with $\sigma=\Id$. To determine its group
diagram, choose a metric on $P$ such that $\L\times {\G}$ acts
isometrically and such that the induced metric on $M$, which makes
$\pi$ into a Riemannian submersion, coincides with the given metric
on $M$. If we let $\tilde{c}$ be a horizontal lift of the geodesic
$c$ in $M$ normal to all $\G$ orbits, it follows that $\tilde{c}$ is
normal to all $\L\times \G$ orbits as well. Furthermore,
$(\ell,g)\tilde{c}(-1)=\tilde{c}(-1)$ implies that
$g{c}(-1)={c}(-1)$ and thus $g=j_-(k)$ for some $k\in\Km$. The
element $\ell$ is uniquely determined by $k$ since $\L$ acts freely.
Letting $\ell=\phi_-(k)$, it follows that $\phi_-$ is a
homomorphism. Hence the lift $L\times \G$ has a group diagram as in
\eqref{principal diagram}.
\end{proof}

In order to avoid having to consider covers of $\G$ as in
\lref{principal bundle} (b), we will  assume from now on that the
action of $\G$ on $M$ is an almost effective action by a simply
connected group, possibly product with a torus. This will ensure
that we obtain all possible lifts of the original action.

Notice also that in the group diagram \eqref{principal diagram}, the
action of $\L\times \G$ may not be effective, even if the action of
$\G$ is, in particular the lift may not be a product group.

\smallskip

We now collect some useful properties of these commuting lifts:

\begin{lem}\label{cover} Assume that $\pi\colon P\to M$ is an $\L$ principal
bundle and that $\G$ acts on $M$.
\begin{itemize}
\item[(a)]\label{reductionlift} If $\G$ admits a commuting lift to a reduction $P^*$ of $P$
corresponding to a subgroup $\L^* \subset \L$ it  admits a
commuting lift to $P = P^* \times_{\L^*} \L$.
\item[(b)] If $\tilde \L$ is a finite cover of $\L$ and  $P$ admits a cover $\tilde P$ yielding a
corresponding principal $\tilde \L$ \ bundle $\tilde P \to M$,
then $\tilde P$ admits a commuting lift if and only if $P$ does.
\item[(c)] \label{product} If $\L$ is a local product  $ \L_1 \cdot \L_2$ then $\G$ admits a
commuting lift to $P$ if and only if it admits a commuting lift
to  the principal $\L_1$ bundle $P_1 = P/\L_2$ and the principal
$\L_2$ bundle $P_2 = P/\L_1$.
\end{itemize}
\end{lem}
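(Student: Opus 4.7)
The plan is to handle the three parts in sequence, relying throughout on the standing assumption that $\G$ is simply connected (possibly product with a torus), which guarantees that actions always lift unobstructed across finite coverings.

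\emph{Part (a).} Realize $P = P^*\times_{\L^*}\L$ as the quotient of $P^*\times \L$ by the left $\L^*$ action $k\cdot(p,\ell)=(pk^{-1},k\ell)$. Extend the given commuting $\G$ action on $P^*$ to $P^*\times \L$ by acting trivially on the second factor; this commutes with the $\L^*$ action, descends to $P$, and on the quotient commutes with the right $\L$ action given by right multiplication in the second factor. The map $P\to M$ is $\G$ equivariant by construction.

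\emph{Part (b).} The kernel $F$ of $\tilde\L\to \L$ is finite and central in $\tilde\L$, so it commutes with any commuting lift on $\tilde P$, and such a lift descends to $P=\tilde P/F$. Conversely, given a commuting lift on $P$, the connected group $\G$ lifts uniquely to the finite cover $\tilde P\to P$ once a basepoint is matched, and the lifted action still commutes with $\tilde\L$ because the commutator defect is a continuous map from a connected space into the discrete kernel of $\tilde\L\to\L$, and hence is identically trivial.

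\emph{Part (c).} The "only if" direction is direct: a commuting action on $P$ commutes with the normal subgroups $\L_1,\L_2\subset \L$, and therefore descends to commuting actions on $P_1=P/\L_2$ and $P_2=P/\L_1$; any finite-kernel discrepancy between the induced principal actions and $\L_1,\L_2$ themselves is absorbed by (b).

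For the "if" direction, I would form the fiber product $\tilde P = P_1\times_M P_2$, which is a principal $\L_1\times \L_2$ bundle over $M$. The two given commuting lifts fit together to a diagonal $\G$ action on $\tilde P$ because both cover the same $\G$ action on $M$, and this diagonal action commutes with $\L_1\times \L_2$. Since $\L$ is the quotient of $\L_1\times \L_2$ by the finite central subgroup $F=\L_1\cap \L_2$, the bundle $P$ coincides with the associated bundle $\tilde P\times_{\L_1\times \L_2}\L$, so part (a) produces a commuting lift on $P$. The main obstacle here is identifying $P$ with this associated bundle and verifying that the diagonal lift is well defined on the fiber product; this is exactly where the hypothesis that $P_1$ and $P_2$ arise as quotients of the same $P$ (so that their lifts cover a common $\G$ action on $M$) does the work.
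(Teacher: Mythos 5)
Your parts (a) and (b) match the standard constructions; the paper treats both as routine ("easily verified"), so no issue there, though for the $P\to \tilde P$ direction of (b) it is worth saying explicitly that the lift of each $g\in\G$ exists and that the resulting lifts assemble into a genuine group action because of the standing assumption that $\G$ is simply connected up to a torus factor.

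For part (c) the "only if" direction and the fiber product idea agree with the paper, but the passage from $\tilde P = P_1\times_M P_2$ to $P$ has a genuine gap. In the local product case $\L = \L_1\cdot\L_2$ with $F = \L_1\cap\L_2$ finite but nontrivial, the quotient $P_1 = P/\L_2$ is a principal bundle for the group $\L/\L_2\cong \L_1/F$, not $\L_1$, and similarly for $P_2$. Hence $\tilde P$ is a principal $(\L_1/F)\times(\L_2/F)\cong\L/F$ bundle, not a $\L_1\times\L_2$ bundle as you assert. In particular $\L_1\times\L_2$ does not act freely on $\tilde P$, and the homomorphism $\L_1\times\L_2\to\L$ is a finite covering rather than a subgroup inclusion, so part (a) --- which extends the structure group along an inclusion $\L^*\subset\L$ --- cannot produce $P$ from $\tilde P$. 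The correct tool is part (b): one checks that $\tilde P\cong P/F$ (since a bundle for the product group $(\L_1/F)\times(\L_2/F)$ is classified by the pair of its factor quotients, which for both $\tilde P$ and $P/F$ are $P_1$ and $P_2$), and then (b) transports the diagonal lift from $\tilde P = P/F$ up the finite cover to $P$. This is exactly what the paper does, only organized in the other order: it first applies (b) to reduce to the genuine product case $\L = \L_1\times\L_2$, and only then forms the fiber product, which there is literally isomorphic to $P$. Your outline is repairable, but the appeal to (a) must be replaced by the reduction via (b).
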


\begin{proof}
The claims in  (a) and (b)  are easily verified. In one direction
(c) follows immediately since if $\G$ admits a lift to $P$ it also
admits a lift to $P/\L'$ for any normal subgroup $\L'$ of $\L$. To
see the converse, we first reduce
 to the case of a product group $ \L=\L_1 \times \L_2$, by applying (b)
to the principal $(\L_1/F)\times (\L_2/F)$ bundle $P/F$, where
$F=\L_1\cap \L_2$ is a finite normal subgroup of $\L_i$. In the case
of a product group, the $ \L=\L_1 \times \L_2$ principal bundle $P$
has classifying map  given by  $f =(f_1,f_2)\colon M\to B_{\L_1}
\times B_{\L_2}$ where $f_i$ are the classifying maps of $P_i$.
Hence the $\L$ principal bundle $P$ is determined up to isomorphism
by the $\L_i$ principal bundles $P_i$. Now consider
$\tilde{P}=\{(x_1,x_2)\in P_1\times P_2 \mid
\pi_1(x_1)=\pi_2(x_2)\}$. $\tilde{P}$ is clearly  a principal
$\L_1\times \L_2$ bundle over $M$ and since $\tilde{P}/\L_i=P_i$ the
bundle $\tilde{P}$ must be isomorphic to $P$. If $P_i$ now admits a
lift that commutes with $\L_i$, it clearly also admits a lift to
$\tilde{P}$ that commutes with $\L_1 \times \L_2$.
\end{proof}

In the case of \co\ one actions we have:

\begin{lem}\label{reduction}
Let $(M,\G)$ be a cohomogeneity one manifold   as in \eqref{diagram}
and $(P,\L\times \G)$ an $\L$ principal bundle over $M$ as in
\eqref{principal diagram}. Then we have:
\begin{itemize}
\item[(a)] If a subgroup $\L^* \subset \L$ contains the image groups $\phi_\pm
(\Kpm)$ then $P$ admits a reduction to $\L^*$.
\item[(b)] Suppose $\G$ is a local product $\G_1\cdot\G_2$ and the
subaction of $\G_1\times \{e\}\subset\G_1\cdot\G_2$ has the same
orbits as the $\G$ action. Then the action of $\G_1\cdot\G_2$
admits a commuting lift to $P$ if and only if $\G_1$ does.
\end{itemize}
\end{lem}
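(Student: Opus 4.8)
The plan is to read both statements directly off the group diagram \eqref{principal diagram}, using the dictionary between cohomogeneity one $\L\times\G$-manifolds and principal bundles supplied by \lref{principal bundle}. For part (a), the point is that the condition $\phi_\pm(\Kpm)\subset\L^*$ lets us write down a valid group diagram with structure group $\L^*$: since the homomorphisms $\phi_\pm$ actually land in $\L^*$, the pair $(\phi_-|_{\L^*\text{-valued}},j_-)$ and $(\phi_+|_{\L^*\text{-valued}},j_+)$ together with $i_\pm$ define a cohomogeneity one $\L^*\times\G$-manifold $P^*$ exactly as in \eqref{principal diagram}. By \lref{principal bundle}(a), $\L^*$ acts freely on $P^*$ and $P^*/\L^*$ is equivariantly diffeomorphic to $M$, so $P^*$ is a principal $\L^*$-bundle over $M$; and $\G$ already acts on $P^*$ commuting with $\L^*$ by construction. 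It then remains to identify $P^*\times_{\L^*}\L$ with $P$. I would do this by noting that the associated bundle $P^*\times_{\L^*}\L$ is again a cohomogeneity one $\L\times\G$-manifold — one checks that inducing up the structure group corresponds at the level of diagrams to composing $\phi_\pm$ with the inclusion $\L^*\hookrightarrow\L$ — and this is precisely the diagram \eqref{principal diagram} for $P$. Hence $P\cong P^*\times_{\L^*}\L$, i.e. $P$ reduces to $\L^*$.

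For part (b), one direction is immediate from \lref{cover}: if $\G_1\cdot\G_2$ lifts to $P$ then so does its subgroup $\G_1$. For the converse, suppose $\G_1$ admits a commuting lift to $P$. The hypothesis that $\G_1\times\{e\}$ has the same orbits as $\G$ on $M$ means that in the group diagram for the $\G_1$-action on $M$ the isotropy groups are $\H_1=\H\cap\G_1$, $\Kpm_1=\Kpm\cap\G_1$, and these surject onto $\G/\G$-orbits the same way; concretely $\G=\Kpm\cdot\G_1$ and $\H\cdot\G_1=\G_1\cdot\H$ in the appropriate sense, so every element of $\G$ can be written using $\G_1$ together with isotropy. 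By \lref{principal bundle}(b) applied to the $\G_1$-action, the lift gives $P$ the structure of a cohomogeneity one $\L\times\G_1$-manifold with diagram as in \eqref{principal diagram}, i.e. homomorphisms $\psi_\pm\colon\Kpm_1\to\L$ extending a common $\psi_0$ on $\H_1$. The task is to promote these to homomorphisms $\phi_\pm\colon\Kpm\to\L$ agreeing on $\H$, which by \lref{principal bundle}(b) in the other direction will exhibit the desired commuting lift of all of $\G_1\cdot\G_2$.

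The main obstacle — and the step deserving real care — is exactly this extension: showing that $\psi_\pm$ extends from $\Kpm_1=\Kpm\cap\G_1$ to $\Kpm$, and that the extensions are forced to agree on $\H$. The key is that because $\G_1$ and $\G$ have the same orbits, $\Kpm_1$ meets every component of $\Kpm$ and in fact $\Kpm=\Kpm_1\cdot(\Kpm\cap\Z)$ where $\Z$ centralizes $\G_1$ (since $\G=\G_1\cdot\G_2$ is a local product, $\G_2$ modulo the finite intersection centralizes $\G_1$); so $\Kpm$ is generated by $\Kpm_1$ together with central elements, and a homomorphism on the former plus a compatible choice on the latter glues. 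Concretely I would argue geometrically, mimicking the proof of \lref{principal bundle}(b): take an $\L\times(\G_1\cdot\G_2)$-invariant metric on $P$ making $\pi$ Riemannian, lift the normal geodesic $c$ horizontally to $\tilde c$, and observe that $\tilde c$ is automatically normal to all $\L\times\G$ orbits because $\G_1$ already achieves all the orbits downstairs; then for $(\ell,g)\in\L\times\G$ with $(\ell,g)\tilde c(\pm1)=\tilde c(\pm1)$, freeness of $\L$ determines $\ell$ uniquely from $g$, and setting $\phi_\pm(g)=\ell$ defines the required homomorphism on $\Kpm$, restricting to $\psi_\pm$ on $\Kpm_1$. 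Compatibility on $\H$ is then automatic from the analogous statement at $c(0)$. This reduces (b) to the same verification as in \lref{principal bundle}(b), now carried out for the larger group acting on a space whose orbit structure is controlled by the subgroup.
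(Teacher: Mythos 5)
Part (a) and the forward implication in (b) are fine and essentially the paper's own argument (for the forward direction, restricting a commuting lift to the subgroup $\G_1$ is immediate, though \lref{cover} is not really the statement being used). The converse in (b), however, has a genuine gap, and it sits exactly at the step you flag as deserving real care. Your ``concrete'' geometric argument is circular: you propose to choose an $\L\times(\G_1\cdot\G_2)$-invariant metric on $P$ and to define $\phi_\pm(g)=\ell$ from $(\ell,g)\tilde c(\pm1)=\tilde c(\pm1)$, but at this stage only $\L\times\G_1$ acts on $P$; there is no action of $g\in\Kpm\setminus\G_1$ on $P$ to speak of, and producing one is precisely what the lemma asserts, so the recipe of \lref{principal bundle}(b) cannot be imitated for the larger group. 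Your algebraic fallback is also false as stated: it is not true in general that $\Kpm=(\Kpm\cap\G_1)\cdot(\Kpm\cap\Z)$ with $\Z$ centralizing $\G_1$. For instance, for the extension of the suspension action \eqref{sum1} to $\SO(4)$, lifted to $\G=\S^3\times\S^3$ with $\G_1=\S^3\times\{e\}$ (a case in which the paper actually applies this lemma), the principal isotropy group is $\H=\Delta\S^3$, while $\H\cap\G_1=\{e\}$ and $\H$ meets the centralizer $\{\pm1\}\times\S^3$ of $\G_1$ only in $\{\pm(1,1)\}$; these do not generate $\H$, and $\Delta\S^3$ does not centralize $\G_1$.

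The correct and sufficient structural fact, which is the heart of the paper's proof, is weaker: after passing to the almost effective action of a simply connected $\G_1\times\G_2$, every isotropy group $\S$ of the $\G$-action splits as a direct product $\S=\S_1\times\S_2$ with $\S_1=\S\cap(\G_1\times\{e\})$ and $\S_2\simeq\G_2$ embedded diagonally, i.e.\ as a graph over $\G_2$ (simple connectivity of $\G_2$ is what splits the extension); the $\S_2$-factor need not be central. The lift homomorphisms $\phi_\pm$ on $\Kpm$ are then taken to be the projection onto the $\S_1$-factor (in particular trivial on the diagonal $\G_2$-factor) followed by the homomorphisms $\psi_\pm$ furnished by the $\G_1$-lift; no ``compatible choice on central generators'' is available or needed. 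Finally, your closing appeal to \lref{principal bundle}(b) ``in the other direction'' requires knowing that the cohomogeneity one $\L\times\G$-manifold defined by the extended diagram is the given bundle $P$, not merely some bundle admitting a lift; this follows by checking that its $\L\times\G_1$-diagram coincides with that of $P$, a verification your write-up omits.
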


\begin{proof} To see (a), consider the \com\ $P^*$ defined by $\H\subset \{
\Km,\Kp\}\subset \L^*\times \G$.  Then $P=P^*\times_{\L^*}\L$ which
one easily verifies by showing that the $\L\times \G$ actions  on
both have the same isotropy groups . Hence $P$ reduces to $P^*$.

In order to prove (b) we assume, by making the action almost
effective if necessary, that $\G_1$ and $\G_2$ are simply connected
and that $\G=\G_1 \times \G_2$. Suppose the $\G_1$ action on $M$
with diagram $\H \subset \{\Km,\Kp\} \subset \G_1$ lifts. We first
claim that the diagram for the extended almost effective $\G_1
\times \G_2$ action on $M$ has isotropy groups $\H \times \G_2
\subset \{\Km \times \G_2,\Kp \times \G_2\}\subset \G_1 \times
\G_2$, where the $\G_2$ factor is embedded diagonally in $\G_1
\times \G_2$. Indeed, if $\G_1\times \G_2/\S$ is a homogeneous space
where $\G_1\times \{e\}\subset \G_1 \times \G_2$ acts transitively,
then $\S$ projects onto the second factor in  $\G_1\times \G_2$ and
$\G_1\times \G_2/\S=\G_1/(\G_1\times\{e\})\cap \S$. Thus
$\S=\S_1\cdot\S_2$ with $\S_1=(\G_1\times\{e\})\cap \S$ and $\S_2$ a
complementary normal subgroup. Since $\S_2$ projects onto $\G_2$ and
$\G_2$ is simply connected, it follows that $\S_2\simeq\G_2$.
Furthermore, $\S_1\cap\S_2=\{e\}$ and thus $\S$ is the direct
product $\S=\S_1\times\S_2$ with $\S_2$  embedded diagonally. We now
apply this argument to each orbit.

 The desired homomorphisms in the
construction of the lift of the $\G_1 \times \G_2$ action can thus
be taken to be the projection to the first factor followed by the
homomorphisms used in the construction of the $\G_1$ lift.
\end{proof}

Lemma \ref{product} (c)  will be particularly useful for us in the
case of $\L=\SO(4)$. In \SO(4) there are two normal subgroups
$\S^3_-, \S^3_+$ isomorphic to $\S^3$, defined by left and right
multiplication by unit quaternions on $\QH\simeq \R^4$ and
$\SO(4)/\S^3_\pm$ is isomorphic to $\SO(3)$. Hence, if $\SO(4)\to P
\to M$ is a principal \SO(4) bundle, there are two associated
principal \SO(3) bundles $P^\pm=P/\S^3_\mp$ over $M$. The
relationship between these bundles can be described as follows:

\begin{prop}\label{so4}
Let $M$ be a compact, simply connected n-dimensional manifold and
$\pi\colon P\to M$ a principal $\SO(4)$ bundle with associated
principal $\SO(3)$ bundles $\pi_\pm\colon P^\pm \to M$.  The
principal bundle  $P$ is uniquely determined by the $\SO(3)$ bundles
$P^\pm$ and  $w_2(P)=w_2(P^\pm)$. Conversely, if $P^\pm\to M$ are
two principal $\SO(3)$ bundles with $w_2(P^-)=w_2(P^+)$, then there
exists a principal $\SO(4)$ bundle $P\to M$ which gives rise to
$P^\pm$.
\end{prop}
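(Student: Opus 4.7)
The plan is to exploit the central group extension
\begin{equation*}
1 \to \Z_2 \to \SO(4) \to \SO(3) \times \SO(3) \to 1,
\end{equation*}
where $\Z_2 = \{\pm I\}$, and its induced homotopy fibration $B\Z_2 \to B\SO(4) \to B(\SO(3)\times\SO(3))$. The map $P \mapsto (P^-,P^+)$ corresponds, via the fiber product $\tP = P^- \times_M P^+$ already used in the proof of \lref{cover}, to the induced map $[M, B\SO(4)] \to [M, B(\SO(3)\times\SO(3))]$.

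The forward direction splits into two pieces. First, $w_2(P) = w_2(P^\pm)$: each projection $\SO(4) \to \SO(3)_\pm$ has simply connected kernel $\Sph^3_\mp$, so induces an isomorphism $\pi_1(\SO(4)) \to \pi_1(\SO(3))$ and therefore identifies the universal $w_2$ classes under pullback. Second, uniqueness of $P$ given $(P^-,P^+)$ is the injectivity of the above map; by the long exact sequence of the homotopy fibration, its fibers are orbits of $[M, B\Z_2] = H^1(M;\Z_2)$, which vanishes because $M$ is simply connected.

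For the converse, the same long exact sequence identifies the image as the kernel of the characteristic class $c \in H^2(B(\SO(3)\times\SO(3));\Z_2)$ of the extension. The key calculation is to show $c = w_2^- + w_2^+$, where $w_2^\pm$ denote the pullbacks from the two $B\SO(3)$ factors. I plan to do this by factoring once more through $\Sp(1)\times\Sp(1) \to \SO(3)\times\SO(3)$: the obstruction to lifting an $\SO(3)\times\SO(3)$ bundle all the way up is $(w_2^-, w_2^+) \in H^2(M;\Z_2 \oplus \Z_2)$, and since $\SO(4)$ is the quotient of $\Sp(1)\times\Sp(1)$ by the diagonal $\Z_2 = \{\pm(1,1)\}$, the obstruction to lifting only as far as $\SO(4)$ is the image of $(w_2^-, w_2^+)$ under the summation $\Z_2 \oplus \Z_2 \to \Z_2$, namely $w_2^- + w_2^+$. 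Thus a lift $P$ exists if and only if $w_2(P^-) = w_2(P^+)$. The main obstacle is this identification of $c$; once it is in hand, existence and uniqueness of $P$ follow formally from the fibration sequence.
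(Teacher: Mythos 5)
Your proof is correct, and it takes a genuinely different route from the paper's. The paper argues by hand in two cases: when $w_2=0$ it first passes to the spin double covers of $P^\pm$, forms the product $\S^3\times\S^3$ bundle, and quotients by $\{\pm(1,1)\}$; when $w_2\ne 0$ it forms the fiber product $\tilde P=P^-\times_M P^+$ directly, checks via the homotopy sequence that $w_2(\tilde P)=0$ and $\pi_1(\tilde P)=\Z_2$, and takes the unique double cover of $\tilde P$, recognizing it as an $\SO(4)$ bundle. You instead work entirely at the level of classifying spaces: the central extension $1\to\Z_2\to\SO(4)\to\SO(3)\times\SO(3)\to 1$ gives a principal fibration $B\Z_2\to B\SO(4)\to B(\SO(3)\times\SO(3))$, and both existence (vanishing of the obstruction $w_2^-+w_2^+$ in $H^2(M;\Z_2)$) and uniqueness (triviality of $H^1(M;\Z_2)$) fall out of the fibration sequence; your identification of the extension class by factoring through $\Sp(1)\times\Sp(1)$ and pushing forward along $\Z_2\oplus\Z_2\to\Z_2$ is a clean and standard computation. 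The payoff of your approach is that it handles both cases of $w_2$ uniformly and makes the role of $H^1(M;\Z_2)=0$ explicit, so one sees exactly what would change for non-simply-connected $M$. The payoff of the paper's approach is that it produces the total space $P$ by an explicit construction (double cover of the fiber product), which the paper reuses later; notice in particular that the fiber product $\tilde P$ reappears in the proof of \lref{cover}(c), so the paper's more concrete argument dovetails with the rest of the section.
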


\begin{proof}
To see that $w_2(P)=w_2(P^\pm)$, consider  the following
commutative diagram of homotopy sequences:
\begin{equation} \label{w2}
\begin{split}
\xymatrix{ 0 \ar[r] & \pi_2(P) \ar[r] \ar[d] & \pi_2(M)
\ar[d]\ar[r]^{w_2(P)} &  \pi_1(\SO(4))\ar[d]\ar[r]&
\pi_1(P)\ar[d]\ar[r] & 0 \\
0 \ar[r] & \pi_2(P^{\pm}) \ar[r]  & \pi_2(M) \ar[r]^{w_2(P^\pm)} &
\pi_1(\SO(3))\ar[r]& \pi_1(P^\pm)\ar[r] & 0
     }
\end{split}
\end{equation}

\no where the boundary map $\partial:\pi_2(M)\to
\pi_1(\SO(k))=\Z_2$, if considered as an element of $\Hom
(\pi_2(M),\Z_2)=H^2(M,\Z_2)$, is precisely $w_2$ (This follows
e.g., by observing that it is clearly true for the universal
bundle). 
Since $\pi_1(\SO(4))\to \pi_1(\SO(3))$ is an
isomorphism, it follows that $w_2(P)=w_2(P^\pm)$. Furthermore, $P$
and $ P^\pm$ are simply connected if and only if $w_2\ne 0$ and
their fundamental group is $\Z_2$ if $w_2=0$.

Now assume that  $w_2(P^\pm)=0$. The bundles $P^\pm$ then admit
(unique) two fold covers $\tilde{P}^\pm$ which are principal $\S^3$
bundles over $M$ classified by $f_\pm\colon M\to B_{\S^3}$ and $f
=(f_-,f_+)\colon M\to B_{\S^3\times \S^3}$ defines a principal
$\S^3\times \S^3$ bundle $\tilde{P}$ over $M$. The bundle
$P=\tilde{P}/\{\pm(1,1)\}$ is now the desired $\SO(4)$ principal
bundle and uniqueness follows as well since $\pi_1(P)=\Z_2$.

 If $w_2(P^-)=w_2(P^+)\ne 0$, we consider, as in the proof of \lref{cover},
  $\tilde{P}=\{(x_-,x_+)\in P^-\times P^+
\mid \pi_-(x_-)=\pi_+(x_+)\}$ together with the natural principal
\SO(3) bundle projections $\gs_\pm\colon \tilde{P}\to P^\pm$.
$\tilde{P}$ is clearly also a principal $\SO(3)\times \SO(3)$ bundle
over $M$. $\tilde{P}$ can be regarded as the pullback of the
principal \SO(3) bundle $P^+ \to M$ via $\pi_-$, and also as the
pullback of the principal \SO(3) bundle $P^-\to M$ via $\pi_+$,
i.e., we have the following commutative diagram of pullback bundles:

\begin{equation*}
\begin{split}
\xymatrix{\tilde{P} \ar[d]_{\gs_-}\ar[r]^{\gs_+}& P^+  \ar[d]^{\pi_+}  \\
P^- \ar[r]^{\pi_-} & M }
\end{split}
\end{equation*}

For both ways of looking at $\tilde{P}$, it follows that
$w_2(\tilde{P})=0$ since the compositions $ \pi_2(P^\pm)  \to
\pi_2(M) \overset{{w_2(P)}}{\longrightarrow} \pi_1(\SO(3))$
     are 0, which implies that
     $0=\pi_-^*(w_2(P^-))=\pi_-^*(w_2(P^+))=
     w_2(\tilde{P})$ and
similarly for $\pi_+$.  Furthermore, since $w_2(P^\pm)\ne 0$,
$P^\pm$ are simply connected, and since $w_2(\tilde{P})=0$ we have
$\pi_1(\tilde{P})=\Z_2$. Hence the unique two fold cover $P\to
\tilde{P}$ is a spin cover of each bundle $\gs_\pm\colon
\tilde{P}\to P^{\pm}$. But \SO(4) is the only two fold cover of
$\SO(3)\times\SO(3)$ which is a two fold cover along each \SO(3) and
hence $P$ is a principal \SO(4) bundle which clearly gives rise to
$P^{\pm}$ in the original construction. Uniqueness follows from the
same commutative diagram of principal bundles.
\end{proof}

We furthermore remark  that the principal $\SO(4)$ bundles whose
structure group reduces to $\U(2)$, i.e. the complex vector bundles,
(resp. $\SU(2)$), are precisely those where either $P^+$ or $P^-$,
reduces to an $\SO(2)$ bundle (resp. becomes trivial).  The even
more special $\SO(4)$ bundles where the structure group reduces to a
2-torus, i.e. the direct sum of two complex line bundles, correspond
to those where both $P^+$ and $P^-$ reduce to an $\SO(2)$ bundle.
Finally, the bundles where the structure group reduces to $\SO(3)$
(i.e. the bundles with a section) correspond to the ones where $P^+$
and $P^-$ are isomorphic.

In terms of oriented vector bundles, the above relationship between
$P$ and $P^\pm$ can also be described as follows. If E is the 4
dimensional vector bundle over M with principal bundle $P$, then
$\Lambda^2(E)=\Lambda^2_-(E)\oplus \Lambda^2_+(E)$ is given by the
decomposition of a 2 form into its self dual and anti self dual
part. Then $\Lambda^2_\pm(E)$ is the 3 dimensional vector bundle
whose principal $\SO(3)$ bundle is $P^\pm$, which follows from the
fact that the decomposition
$\fso(4)=\Lambda^2\R^4=\Lambda^2_-\R^4\oplus
\Lambda^2_+\R^4=\fso(3)\oplus\fso(3)$ is the decomposition of
$\fso(4)$ into simple ideals. Using this, one easily shows (cf.
\cite{DR}):

\begin{equation}\label{ppm}
p_1(P^\pm)=p_1(P)\pm 2e(P)
\end{equation}

In the case of  complex rank two bundles one has the Chern classes
$c_1$ and $c_2$ and since for the underlying real bundle
$p_1(P)=c_1^2-2c_2$ (cf. \cite[15.5]{Mi}), the relationship
\eqref{ppm} implies that:

\begin{equation}\label{chern}
p_1(P^+)=c_1^2 \text{ and } p_1(P^-)=c_1^2-4c_2
 \end{equation}
and hence, under the usual embedding of $\U(2)$ in $\SO(4)$, $P^+$
is the one whose structure group reduces.

\section{Cohomogeneity one four manifolds}

\smallskip

According to Parker's classification \cite{Pa} of all cohomogeneity
one 4-manifolds, the only simply connected manifolds which admit
such actions are $ \Sph^4, \CP^2, \Sph^2 \times \Sph^2, \text{and }
\CP^2 \# -\CP^2$. We will analyze the lifting problem for each
cohomogeneity one action by a connected compact group on these
manifolds.

\smallskip

In this section we describe the \co\ one actions on simply connected
4-manifolds. Although these actions are exhibited in \cite{Pa}, we
need to know the precise group picture for our applications. Recall
that we can derive the group diagram \eqref{diagram} by choosing one
singular orbit $B_- = \G/\Km$ with $\Km = \G_{x_-}$  and by choosing
any geodesic $c(t)=\exp_{x_-}(tv_-)$ with $v_-$ normal to $B_-$
since it will then  automatically be normal to all orbits. We then
need to determine the first $t_0$ when $c(t_0)$ meets $B_+$, i.e.
when $\G_{c(t_0)}$ is bigger than the principal isotropy group $\H$.
Then $c(t_0)=x_+$ and $\Kp=\G_{x_+}$ and $\H=\G_{c(t)} , 0<t<t_0$.

\smallskip

We start with cohomogeneity one actions with singular orbits of
codimension two, since they are the most important ones in geometric
applications.  
In addition, with one exception, they are all \coo\  under
$\G=\SO(3)$ or $\SU(2)$ or an extension of an $\SU(2)$ cohomogeneity one action
to $\U(2)$. We will describe them as an action with $\G=\S^3$ in
order to obtain all possible lifts.

\smallskip

First recall that the linear $\SO(3)$ action on $\Sph^4$
corresponding to a maximal subgroup of $\SO(5)$, which played a
pivotal role in \cite{GZ}, has the following group diagrams when
lifted to $\S^3$:
\begin{equation}\label{S^4}
\begin{split}
         \xymatrix{
            & {\S^3} & \\
           { {\text{C}_i}\cup j{\text{C}_i}}\ar@{->}[ur]^{}
           & & {{\text{C}_j}\cup i
{\text{C}_j}}\ar@{->}[ul]_{}
\\
            & {\text{Q}}\ar@{->}[ul]^{}\ar@{->}[ur]_{} &
            }
\end{split}
\qquad \Sph^4
\end{equation}

\no where C$_i =\{ {e^{i\gt}} | \gt \in \R\}$, C$_j =\{ {e^{j\gt}} |
\gt \in \R\}$ are ``coordinate'' circle groups, and ${\text{Q}} = \{\pm 1, \pm i, \pm j, \pm k\}$ is the quaternion group.

Next consider $\CP^2$, where we write a point in homogeneous
coordinates $[v] , v=(z_0,z_1,z_2)$ $\in \C^3$. Then $\SO(3)$ acts
on $\CP^2$ as $[v] \to [gv]$. One singular orbit is clearly
$B_-=\RP^2=\{[v]\mid v\in \R^3\subset \C^3\}$. Let $x_-=[(1,0,0)]$
and hence $\Km=\orth{1}{2}$. One easily checks that
$c(t)=[(\cos(t),i\sin(t),0)]$ is a geodesic in $\CP^2$ orthogonal to
$B_-$ at $x_-$ and that $\H=\G_{c(t)}=\Z_2=
\langle\diag(-1,-1,1)\rangle$ as long as $0<t<\pi/4$. When $t=\pi/4$
we set $x_+=[(1/\sqrt{2},i/\sqrt{2},0)]$ and one shows that
$\Kp=\G_{x_+}=\SO(2)$ which is embedded in $\SO(3)$ as a rotation in
the first two coordinates. $B_+=\G/\Kp$ can also be described as the
quadric $\sum z_i^2=0 $ in $\CP^2$ since $x_+$ lies in it and the
quadric is clearly preserved by the $\SO(3)$ action.  After lifting
these groups into $\S^3$ (and renumbering the coordinates)  the
group picture for $\CP^2$ becomes:

\begin{equation}\label{CP2}
\begin{split}
         \xymatrix{
            & {\S^3} & \\
            {\text{C}_i \cup j\text{C}_i}\ar@{->}[ur]^{} & &
{\text{C}_j}\ar@{->}[ul]_{} \\
            & {\Z_4= \langle j \rangle }\ar@{->}[ul]^{}\ar@{->}[ur]_{} &
            }
\end{split}
\qquad \CP^2
\end{equation}

If we compare these group diagrams, one immediately sees that
$\CP^2$ is an equivariant two  fold branched cover of $\Sph^4$, with
branching locus (and metric singularity) along  the real points
$\RP^2$ in $\CP^2$ and   the Veronese embedding of $\RP^2$ in
$\Sph^4$, the covering given by coverings along the orbits of
$\S^3$.

Next we consider the \com \ $M_n$ defined by the group diagram:

\begin{equation}\label{Mn}
\begin{split}
         \xymatrix{
            & {\S^3} & \\
            {\text{C}_i}\ar@{->}[ur]^{} & & {\text{C}_i}\ar@{->}[ul]_{} \\
            & \Z_n \ar@{->}[ul]^{}\ar@{->}[ur]_{} &
            }
\end{split}
\end{equation}

One easily shows that for $n=1$ this is the action on $\C
P^2\#-\CP^2$ obtained as follows: $\SU(2)$ acts on $\CP^2$ fixing a
point. Take two copies of $\CP^2$ with such an action and remove a
small ball around the fixed points. If we identify the boundaries
with an equivariant diffeomorphism, we obtain the desired action on
$\C P^2\#-\CP^2$. For $n=2$, one easily shows that the group diagram
is the one induced by the linear action of $\SO(3)$ on
$\Sph^2\times\Sph^2$ given by  $A\cdot (v,w)=(Av,Aw)$. In both cases
we have used the fact (see Lemma \ref{equiv}) that  any two circles
in $S^3$ can be conjugated to each other with an element in
$\N(\H)_{\subo}=S^3$. In general one shows ( \cite{Pa} ) that $M_n$
is diffeomorphic to $\CP^2\#-\CP^2$ for $n$ odd, and to
$\Sph^2\times\Sph^2$ for $n$ even, although they are of course not
equivariantly diffeomorphic. All actions in \eqref{Mn} admit
extensions to $\S^3\times \S^1$ given by the group diagrams
$\S^1=\{(e^{in\theta}, e^{i\theta})\} \subset \{\T^2,\T^2\} \subset
\S^3\times \S^1$.

\smallskip

The only further action where both orbits have cohomogeneity two is
the product of a transitive action and a \co\ one action on $\Sph^2
\times \Sph^2$, i.e., it is given by

\begin{equation}\label{trans}
 \SO(2) \times
\{1\} \subset \{\SO(2) \SO(2), \SO(2) \SO(2)\} \subset \SO(3) \SO(2)
\end{equation}

\bigskip

We now proceed to quickly record the  remaining cohomogeneity one
actions on 1-connected 4-manifolds. 

\smallskip

 The first one is the suspension action on $ \Sph^4 \subset
  \R^5=\R^4\oplus\R$ with diagram:

\begin{equation}\label{sum1}
 \{1\} \subset \{\S^3,\S^3\} \subset \S^3
\end{equation}

\no and its extensions

$$ \U(1) \subset \{ \U(2),\U(2)\} \subset \U(2)  \text{ and
}  \SO(3) \subset
\{\SO(4),\SO(4)\}  \subset \SO(4)$$

\smallskip

\no The second is a sum action on $\Sph^4 \subset
\R^5=\R^3\oplus\R^2$ with diagram:

\begin{equation}\label{sum2}
\{1\} \times \SO(2) \subset \{\{1\} \times
\SO(3),\SO(2) \times \SO(2) \} \subset \SO(2) \SO(3)
\end{equation}

\smallskip

\no The third action is the action on $\CP^2$ with a fixed point,
i.e., it is induced
 from the sum action on $\Sph^5 \subset \C \oplus \C^2$ and has the
  diagram

\begin{equation}\label{cp2fix}
 \{1\} \subset
\{\SU(2), \U(1)\} \subset \SU(2)
\end{equation}

\no and its extension
$$ \U(1) \subset \{\U(2), \U(1) \U(1)\} \subset
\U(2)$$

\smallskip

\no According to Parker \cite{Pa}, this exhaust all cohomogeneity
one actions on 1-connected 4-manifolds. But notice that the action
of $\SO(3)$ on $\CP^2$ and all actions with $\G=\U(2)$ were left out
in his classification. For a complete list see \cite{Ho},  where a
classification of simply connected \com s of dimension at most $7$
was carried out.

\section{Topological Classification}

The purpose of this section is to review the classification of
vector bundles over simply connected closed 4-manifolds $M$, and to
relate it to our setting. Specifically, the classification is expressed in terms of characteristic classes, and for our purposes it is essential that we can read this information off from the cohomology of the total space of the corresponding principal bundles. Since all vector bundles over $M$ are
orientable, this amounts to a classification of principal $\SO(k)$
bundles over such manifolds.

\begin{equation}
Principle \  \SO(2) \  bundles \  P \ are \ classified \ by
\ their \ Euler class
 \ e(P)\in H^2(M,\Z)
 \end{equation}

\no and any such class is realized by a unique principal bundle.

\smallskip

In the case of principal $\SO(3)$ bundles, it is well known that
they are classified  by their second Stiefel Whitney class $w_2(P)
\in H^2(M,\Z_2)$  and their first Pontryagin class $p_1(P) \in
H^4(M,\Z) = \Z$ (cf. \cite{FU} and \cite{DW}).  Fixing an
orientation on $M$, we identify $p_1$ with the integer
$k=p_1(P)([M])$.
    Here  $w_2$ can be chosen arbitrary, but the
 values of $p_1$ are restricted to a congruence class mod $4$.
     To see which one is allowed for a given value of $w_2(P)$,
choose a principal \SO(2) bundle  $P^*$ over $M$ whose Euler class
$e\in H^2(M,\Z)$ reduced mod 2 is equal to $w_2$ and let $P$ be the
     principal \SO(3) bundle that we obtain by extending the
     structure group of
$P^*$.
     Then  $w_2(P)=w_2$ and $p_1(P)=e^2$
(see \cite[15.8]{Mi} ). Thus we have:

\begin{prop}\label{wep}
A principal $\SO(3)$ bundle $P$ over a simply connected 4-manifold
$M$  is determined by its second Stiefel Whitney class $w_2$ and its
first Pontryagin class $p_1$. Here $w_2$ can take any value, and
$p_1$ can take any value congruent to $e^2$ mod 4, where $e$ is the
Euler class of a principal circle bundle with Stiefel Whitney class
$w_2$.
\end{prop}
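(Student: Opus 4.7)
The plan is to derive this from the classification theorem of Dold--Whitney \cite{DW} (see also \cite{FU}), which asserts that over any CW complex of dimension at most four, principal $\SO(3)$ bundles are classified up to isomorphism by the pair $(w_2, p_1) \in H^2(M, \Z_2) \times H^4(M, \Z)$, subject to a single congruence on $p_1 \bmod 4$ coming from the Pontryagin square of $w_2$. Granting this, uniqueness in the proposition is immediate; the content is then to match the congruence with the stated condition $p_1 \equiv e^2 \pmod{4}$ and to realize every such pair.

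For realization, since $M$ is simply connected one has $H_1(M, \Z) = 0$, so $H^2(M, \Z)$ is torsion-free and the mod-two reduction $H^2(M, \Z) \to H^2(M, \Z_2)$ is surjective. Given $w \in H^2(M, \Z_2)$, I pick an integral lift $e$, form the principal $\SO(2)$ bundle $P^*$ with Euler class $e$, and extend via the inclusion $\SO(2) \hookrightarrow \SO(3)$ to obtain a principal $\SO(3)$ bundle $P_0$; by \cite[15.8]{Mi} it satisfies $w_2(P_0) = w$ and $p_1(P_0) = e^2$, producing one allowed value of $p_1$ for every $w$. That $e^2 \bmod 4$ depends only on $w$ and not on the chosen lift follows from a direct computation: if $e' = e + 2f$ is another lift of $w$, then $(e')^2 - e^2 = 4 f(e + f) \in 4 H^4(M, \Z)$.

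To realize every remaining value $p_1 \in e^2 + 4\Z$, I would twist $P_0$ by principal $\Sp(1) = \Spin(3)$ bundles; such bundles over a $4$-complex are classified by $H^4(M,\Z) = \Z$ via the second Chern class, and the adjoint covering $\Sp(1) \to \SO(3)$ induces $p_1 \mapsto 4 c_2$ on the degree-four cohomology of classifying spaces. The fibration $B\Sp(1) \to B\SO(3) \to K(\Z_2, 2)$ coming from $\Z_2 \to \Sp(1) \to \SO(3)$ then exhibits each $w_2$-fibre of $[M, B\SO(3)]$ as a transitive $\Z$-set, with the twist shifting $p_1$ by arbitrary multiples of $4$; combined with the base bundle $P_0$ this produces every pair $(w, e^2 + 4m)$. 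The main delicate point is verifying transitivity of the $\Sp(1)$-twist on each $w_2$-fibre together with the exact relation $p_1 = 4 c_2$ under the adjoint covering; both are standard characteristic-class computations, but they are what tie the Dold--Whitney congruence precisely to the statement in the proposition.
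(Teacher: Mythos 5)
Your proof is correct and follows essentially the same route as the paper: both invoke the Dold--Whitney classification to get that $(w_2,p_1)$ determines the bundle, and both pin down the mod-$4$ congruence by extending the structure group of an $\SO(2)$-bundle with Euler class $e$ lifting $w_2$ and using $p_1 = e^2$ via \cite[15.8]{Mi}. The surjectivity onto the full congruence class $e^2 + 4\Z$ and the well-definedness of $e^2 \bmod 4$ are points the paper defers to \cite{DW} and \cite{FU}, and your explicit lift-independence computation together with the $\Sp(1)$-twist sketch (Moore--Postnikov for $B\Sp(1)\to B\SO(3)\to K(\Z_2,2)$) are sound ways to fill them in.
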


In particular, for bundles over $\CP^2$, we have  $\con p_1=1(4) $
if $w_2\ne 0$ and $\con p_1=0(4) $ if $w_2 = 0$, as long as we
choose the orientation class on $\CP^2$ to be the square of a
generator in dimension 2.

In Section 1 we saw how principal $\SO(4)$ bundles $P$ are in
 one to one correspondence with a pair of $\SO(3)$ bundles
 $P^{\pm}$ with the same $w_2$. They are classified according
 to the following well know result (cf. \cite{DW}) :

\begin{prop}\label{prinso4}
A principal $\SO(4)$ bundle $P$ over a simply connected 4-manifold
$M$ is determined by $w_2(P)$, $p_1(P)$, and the Euler class $e(P)$.
Here $w_2(P)$ can take any value, whereas  $p_1(P)$ and $e(P)$ are
restricted via \pref{wep} and \eqref{ppm}.
\end{prop}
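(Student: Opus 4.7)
The plan is to reduce the classification of principal $\SO(4)$ bundles to the already-proved classification of principal $\SO(3)$ bundles via the bijection established in \pref{so4}, and then to translate the invariants $(w_2(P^\pm), p_1(P^\pm))$ classifying the pair $(P^+,P^-)$ into the invariants $(w_2(P), p_1(P), e(P))$ of $P$ using the identity \eqref{ppm}.

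For the uniqueness statement, I would first invoke \pref{so4} to say that $P$ is determined by the pair of associated $\SO(3)$ bundles $P^\pm = P/\S^3_\mp$, each of which satisfies $w_2(P^\pm) = w_2(P)$. By \pref{wep}, each $P^\pm$ is in turn determined by $w_2(P^\pm)$ and $p_1(P^\pm)$, so $P$ is determined by the triple $(w_2(P), p_1(P^+), p_1(P^-))$. Relation \eqref{ppm} gives $p_1(P^\pm) = p_1(P) \pm 2e(P)$, which is a bijective $\Z$-linear change of variable between $(p_1(P^+), p_1(P^-))$ and $(p_1(P), e(P))$. Hence $P$ is equivalently determined by $(w_2(P), p_1(P), e(P))$.

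For the existence and constraints, I would run the argument in reverse. Given any admissible triple $(w_2, p_1, e)$ -- admissible meaning that both $p_1 + 2e$ and $p_1 - 2e$ are realizable as Pontryagin classes of principal $\SO(3)$ bundles with second Stiefel--Whitney class $w_2$, in the sense of \pref{wep} -- set $p_1(P^\pm) = p_1 \pm 2e$ and use \pref{wep} to obtain principal $\SO(3)$ bundles $P^\pm$ with these invariants and with $w_2(P^+) = w_2(P^-) = w_2$. Since the second Stiefel--Whitney classes agree, the converse part of \pref{so4} produces a principal $\SO(4)$ bundle $P$ whose associated $\SO(3)$ bundles are $P^\pm$, and which therefore has the prescribed $w_2(P) = w_2$, $p_1(P) = \tfrac12(p_1(P^+) + p_1(P^-)) = p_1$, and $e(P) = \tfrac14(p_1(P^+) - p_1(P^-)) = e$. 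The value of $w_2$ is unconstrained because \pref{wep} allows any $w_2 \in H^2(M,\Z_2)$ for each $P^\pm$.

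There is no real obstacle here, since the two ingredients \pref{so4} and \pref{wep} together with \eqref{ppm} directly fit together; the only point to be careful about is the bookkeeping of the congruence condition. The compatibility $p_1(P^+) \equiv p_1(P^-) \pmod 4$ automatic from $p_1(P^+) - p_1(P^-) = 4e(P)$ is consistent with \pref{wep} (which forces both $p_1(P^\pm)$ to lie in the same mod-$4$ congruence class determined by $w_2$), so the constraint on $(p_1(P), e(P))$ is precisely that a single congruence of the form $p_1 + 2e \equiv e_0^2 \pmod 4$ holds, where $e_0 \in H^2(M,\Z)$ is any integral lift of $w_2$, as dictated by \pref{wep} and \eqref{ppm}.
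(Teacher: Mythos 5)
Your proof is correct and is essentially the argument the paper has in mind: the sentence immediately preceding \pref{prinso4} recalls that $P \leftrightarrow (P^+,P^-)$ is a bijection via \pref{so4}, and the proposition is then stated as a well-known consequence with a citation to Dold--Whitney rather than proved. One tiny imprecision: the map $(p_1,e)\mapsto(p_1+2e,\,p_1-2e)$ over $\Z$ has determinant $-4$, so it is injective but not a bijective $\Z$-linear change of variable; injectivity is all that is used for uniqueness, and for existence you correctly restrict to triples for which both $p_1\pm 2e$ are admissible values for $\SO(3)$ bundles in the sense of \pref{wep}, so the argument stands.
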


In particular for $\SO(4)$ bundles over $\CP^2$,   the
allowed values of these invariants are  given by
$p_1(P)=2k+2l\, ,\, e(P)=k-l$ for $ k,l\in\Z$ in the spin case, and
by $p_1(P)=2k+2l+1 \, ,\, e(P)=k-l$ in the non-spin case.

\bigskip

The classification of the remaining principal bundles is provided by  the
following well known fact (cf. \cite{DW}):

\begin{prop}\label{prinsok}
For $k \ge 5$, a principal $\SO(k)$ bundle $P$ over a simply
connected 4-manifold $M$ is determined by $w_2(P)$, $w_4(P)$ and
$p_1(P)$.
\end{prop}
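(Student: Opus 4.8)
The plan is to compute the set of homotopy classes of maps $M\to B\SO(k)$ through the low-dimensional Postnikov stages of $B\SO(k)$, using the fact that $M$ is a $4$-complex (up to homotopy, a CW complex with cells only in dimensions $0,2,3,4$ since it is simply connected), so only the $4$-type of $B\SO(k)$ matters. First I would recall the low-degree homotopy of $\SO(k)$ for $k\ge 5$: $\pi_1(\SO(k))=\Z_2$, $\pi_2(\SO(k))=0$, $\pi_3(\SO(k))=\Z$. Hence $B\SO(k)$ has $\pi_2=\Z_2$, $\pi_3=0$, $\pi_4=\Z$, and its Postnikov tower through dimension $4$ is built from $K(\Z_2,2)$ and $K(\Z,4)$, with a single $k$-invariant in $H^5(K(\Z_2,2);\Z)$. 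Since $M$ has dimension $4$, a map $M\to B\SO(k)$ is classified up to homotopy by its lift through the second Postnikov stage, which is detected by characteristic classes landing in $H^*(M)$.

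Next I would turn the obstruction-theoretic bookkeeping into the stated invariants. The map to $K(\Z_2,2)$ picks out $w_2(P)\in H^2(M;\Z_2)$. Given a fixed $w_2$, the remaining data is a lift to the next stage; the obstruction to extending over the $4$-skeleton and the difference of two such lifts live in $H^4(M;\pi_3(\SO(k)))=H^4(M;\Z)$, which for our $M$ is $\Z$, together with a $\Z_2$-worth of data in $H^4(M;\Z_2)$ coming from the $\pi_3$-part of $B\SO$ only after one accounts for the Bockstein-type $k$-invariant. Concretely, the universal classes available in $H^4(B\SO(k);\Z)$ and $H^4(B\SO(k);\Z_2)$ for $k\ge 5$ are $p_1$ and $w_4$ (note $w_4$ is \emph{not} a function of $w_2$ once $k\ge 5$, unlike the $\SO(3)$ and $\SO(4)$ cases), and the integral class $\tfrac12 p_1$-type refinement is not available, so $p_1\bmod$ torsion plus $w_4$ exactly exhaust the degree-$4$ data. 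I would then invoke that for $k\ge 5$ these three classes $w_2,w_4,p_1$ are independent and freely realizable in the appropriate ranges (this is exactly the classical computation; one can cite \cite{DW} or reprove it by the stable range argument $\pi_i(\SO(k))=\pi_i^{st}$ for $i\le k-2$, which covers $i\le 3$ when $k\ge 5$).

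The argument then has two halves. For injectivity: if $P$ and $P'$ have the same $w_2,w_4,p_1$, then their classifying maps agree on the $2$-skeleton up to homotopy (same $w_2$ and $H^3(M)$-contribution vanishes since $\pi_2(\SO(k))=0$ kills any difference over the $3$-skeleton), and the difference over the $4$-skeleton is an element of $H^4(M;\pi_3(\SO(k)))\oplus(\text{the }\Z_2\text{ piece})$ detected precisely by $p_1-p_1'$ and $w_4-w_4'$; vanishing of both forces the maps to be homotopic, hence $P\cong P'$. For realizability: given an admissible triple, one builds the bundle by first taking an $\SO(3)$ (or line) bundle realizing $w_2$ and a prescribed $p_1$ via \pref{wep}, stabilizing to $\SO(k)$, and then correcting $w_4$ by a Whitney sum with a suitable bundle pulled back from $H^4(M;\Z_2)=\Z_2$ (e.g.\ a bundle with $w_2=0$, $p_1=0$ but $w_4\ne0$, which exists for $k\ge5$ since $\pi_3(\SO(k))\to\pi_3(\SO(k)/\SO(k-1))$ realizes $w_4$ in the stable range); one then checks this correction leaves $w_2$ and $p_1$ unchanged.

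The main obstacle, and the step I would be most careful about, is making precise that \emph{no further invariant exists} — i.e.\ ruling out a secondary characteristic class in degree $4$ that is not a polynomial in $w_2,w_4,p_1$. This is where one genuinely uses $k\ge5$: the single $k$-invariant of $B\SO$ in this range is $Sq^2$ applied to the degree-$2$ class landing in $H^5(K(\Z_2,2);\Z_2)$, and its effect is already fully captured by the relation between $w_4$, $w_2^2$, and the integral lift of $p_1$ — there is no leftover. Rather than redo this Postnikov computation in detail I would simply cite \cite{DW} for the statement, noting only that it follows from the stability $\pi_i(\SO(k))\cong\pi_i(\SO)$ for $i\le k-2$ together with $\pi_{\le3}(\SO)=(\Z_2,0,\Z)$, so that $B\SO(k)$ and $B\SO$ have the same $4$-type for $k\ge5$, whence the classification over the $4$-complex $M$ is the stable one, governed exactly by $w_2$, $w_4$, and $p_1$.
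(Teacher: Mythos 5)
The paper itself offers no proof of this proposition: it is quoted as a known classification result with the reference \cite{DW}, and your write-up, after setting up the Postnikov/obstruction framework, likewise defers to \cite{DW} for the decisive step. So for the statement actually asserted (that $w_2,w_4,p_1$ determine $P$) you are on the paper's route, and the frame you put around the citation is sound where it matters: $B\SO(k)\to B\SO$ is $5$-connected for $k\ge 5$, $\pi_2(\SO)=0$ kills the degree-$3$ comparison, and for two bundles with the same $w_2$ the remaining difference lies in $H^4(M;\pi_4(B\SO))=H^4(M;\Z)$ and is detected by $p_1$ (up to a factor $\pm 2$, which loses nothing since $H^4(M;\Z)=\Z$ is torsion free).

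Two details, however, are wrong and should not be kept. First, the degree-$4$ difference group is just $H^4(M;\Z)$; there is no additional independent ``$\Z_2$-worth of data'', and the $k$-invariant lives in $H^5(K(\Z_2,2);\Z)$ (the integral Bockstein of $Sq^2\iota$), not in mod~$2$ cohomology. Second, and more seriously, your realizability step invokes a bundle with $w_2=0$, $p_1=0$ and $w_4\ne 0$, and no such bundle exists over a simply connected closed $4$-manifold: write a rank $k\ge 5$ bundle as a rank $4$ bundle plus a trivial one; in the spin case \pref{wep} and \eqref{ppm} give $p_1(P^{\pm})=p_1\pm 2e\equiv 0 \bmod 4$, so $p_1=0$ forces $e$ even and hence $w_4\equiv e\equiv 0 \bmod 2$. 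In other words, over such a manifold $w_4$ is not an invariant that can be adjusted independently of $w_2$ and $p_1$ (equivalently, Wu's relation expresses $p_1+2w_4$ mod $4$ through the Pontryagin square of $w_2$); this is exactly what Table A and the paragraph following the proposition record. Since the proposition only claims that the triple determines $P$, this flaw does not affect the statement you were asked to prove, but the proposed ``correction of $w_4$ by a Whitney sum'' would fail and must be deleted; the admissible triples are more constrained than your sketch suggests.
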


Recall  that a $k$ dimensional vector bundle over $M^4$ is the
direct sum of a $4$ dimensional vector bundle with a trivial one and
that for a $4$ dimensional vector bundle  $w_4(P)\equiv e(P)$ mod
$2$. This completely determines the allowed values of the invariants
in \pref{prinsok}.

\smallskip

In our later applications we only need to consider $M=\Sph^4$ and
$M=\CP^2$ and in these cases one obtains the following table for the
allowed values:

\renewcommand{\thetable}{\Alph{table}}
\stepcounter{equation}

\setlength{\tabcolsep}{0.40cm}
\renewcommand{\arraystretch}{1.6}
\begin{table}[!h]
\begin{center}
\begin{tabular}{|c|c|c|}
  \hline
  $p_1$ & $w_2$ & $w_4$ \\ \hline
  0 mod 4 & 0 & 0 \\ \hline
  2 mod 4 & 0 & $\ne 0$ \\ \hline
  1 mod 4 & $\ne 0$ & 0 \\ \hline
  3 mod 4 & $\ne 0$ & $\ne 0$ \\
  \hline
\end{tabular}
\end{center}\caption{}
\end{table}
\bigskip

 In particular, we see that the values of $w_2$ and $w_4$ happen to be
determined by $p_1$.

\smallskip

Our strategy in determining which principal $\SO(k)$ bundles $P$
admit a commuting lift of a given cohomogeneity one action is to use
their description in Lemma \ref{principal bundle} and compute the
possible corresponding characteristic classes. As indicated earlier, it is crucial for us
that these in turn can be expressed in terms of the topology of the
total space $P$ according to the following result:

\begin{prop}\label{HP}
Let $M$ be a compact, simply connected, 4-dimensional manifold with
second Betti number $b$ , and $P\to M$ a principal $\SO(k)$ bundle
with $s=|p_1(P)([M])|$. Then
\begin{itemize}
\item[(a)] $w_2\ne 0$ if and only if $P$ is simply connected.
\item[(b)]$p_1(P) \ne 0$ if and only if $H^4(P,\Z)$ is finite,
 and in that case
$H^3(P,\Z)=0$.
\item[(c)] If   $p_1(P)\ne 0$, then
$|H^4(P,\Z)| = 2^{b - 1}s$ if $k=3$ and $|H^4(P,\Z)| = 2^{b}s$ if
$k\ge 5$.
\end{itemize}
\end{prop}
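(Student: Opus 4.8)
The plan is to analyze the Serre spectral sequence (or equivalently the homotopy/Gysin sequences) of the fibration $\SO(k) \to P \to M$, exploiting that $M$ is a simply connected $4$-manifold so that $H^*(M;\Z)$ is concentrated in degrees $0,2,4$ with $H^2(M;\Z)\cong\Z^b$ and $H^4(M;\Z)\cong\Z$, and that the relevant homotopy groups of $\SO(k)$ are $\pi_1(\SO(k))=\Z_2$, $\pi_2(\SO(k))=0$, $\pi_3(\SO(k))=\Z$ (for $k\ge 5$; for $k=3$ use $\Spin(3)=\S^3$ and $\pi_3=\Z$ as well). For part (a): since $M$ is simply connected, the homotopy exact sequence of $P\to M$ reads $\pi_2(M)\xrightarrow{\partial}\pi_1(\SO(k))\to\pi_1(P)\to 1$, and as noted already in the proof of \pref{so4} the boundary map $\partial$ is exactly $w_2(P)$ viewed in $\Hom(\pi_2(M),\Z_2)=H^2(M;\Z_2)$; hence $\pi_1(P)=1$ iff $\partial$ is surjective iff $w_2(P)\ne 0$.

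For parts (b) and (c) I would compute $H^*(P;\Z)$ through degree $4$ using the cohomology Serre spectral sequence of $\SO(k)\to P\to M$. The only differentials that can be nonzero in this range are the transgression $d_3\colon H^0(M;H^2(\SO(k)))$-type terms and, crucially, $d_3\colon E_3^{0,2}\to E_3^{3,0}$ and $d_5\colon E_5^{0,4}\to E_5^{5,0}$, together with the lower differential encoding $w_2$. Concretely, with $\Z$ coefficients, $H^1(\SO(k))=0$, $H^2(\SO(k))$ has $2$-torsion detected by $w_2$, and $H^3(\SO(k))=\Z$ (generated by the class restricting to the generator of $H^3(\SO(3))=H^3(\S^3)$ under $\SO(3)\hookrightarrow\SO(k)$). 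The transgression of the degree-$3$ generator is the obstruction class $p_1(P)$ (up to a universal factor; one pins down the normalization via the universal bundle $\SO(k)\to E\SO(k)\to B\SO(k)$ exactly as for $w_2$ in \pref{so4}). Thus $H^4(P;\Z)$ is built from $H^4(M;\Z)=\Z$ modulo the image of this transgression, which is the subgroup $s\Z$ where $s=|p_1(P)([M])|$, together with a possible contribution from $H^2(M;\Z_2)$-type torsion coming from the $w_2$ differential; $H^3(P;\Z)$ is $\ker$ of a map out of $H^3(\SO(k))=\Z$ which is injective precisely when $p_1\ne 0$. This gives (b) (finiteness of $H^4(P)$ and vanishing of $H^3(P)$ iff $p_1\ne 0$), and (c) will come from a careful bookkeeping of the orders: the factor $2^{b}$ versus $2^{b-1}$ distinguishes $k\ge5$ from $k=3$ because in the $\SO(3)$ case one extra $\Z_2$ gets killed — most transparently seen by passing to the $\S^3=\Spin(3)$ bundle when $w_2=0$, or more uniformly by noting $H^*(\SO(3))$ has an extra low-degree $\Z_2$ compared to the stable range.

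An alternative, and probably cleaner, route for (b) and (c) is to use the Gysin sequence of the sphere bundle $\S^{k-1}\to S(E)\to M$ associated to $P$ together with the relation between $H^*(P)$ and $H^*(S(E))$, or even better to use the fact established in \pref{so4} and \eqref{ppm} reducing $\SO(4)$ to $\SO(3)$, and handle $\SO(k)$, $k\ge5$, by writing the vector bundle as a $4$-plane bundle plus a trivial bundle and comparing $P$ with the total space of the associated $\SO(4)$ bundle up to a fibration with fiber $\SO(k)/\SO(4)$ (which is $2$-connected for $k\ge 5$, hence doesn't affect $\pi_1$ and only predictably affects $H^3,H^4$). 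I would lean on this since it isolates the real computation to the $\SO(3)$ and $\SO(4)$ cases.

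The main obstacle is getting the torsion orders in (c) exactly right, including the precise power of $2$. The subtle points are: (i) identifying the transgression of the $H^3$-generator with $p_1$ on the nose (the factor such that its image in $H^4(M;\Z)=\Z$ is $p_1(P)([M])\cdot\Z$, not $\tfrac14 p_1$ or $2p_1$), which I would nail down by testing on known examples (e.g.\ bundles over $\S^4$, where $H^4(P)$ is explicitly known) and on the universal bundle; and (ii) tracking the interaction of the $w_2$-differential with the $p_1$-transgression, since both feed into $H^4(P;\Z)$ and one must be sure no cancellation or extension ambiguity changes the order by a factor of $2$. The cleanest way to control (ii) is to separate the cases $w_2=0$ and $w_2\ne0$: when $w_2\ne0$ one has $\pi_1(P)=1$ by part (a) and the spectral sequence simplifies; when $w_2=0$ one can pass to the double cover (the $\Spin$ bundle) as in \pref{so4} and compute there, then descend. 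I expect the bound $|H^4(P;\Z)|=2^{b-1}s$ for $k=3$ to reflect exactly this: the double cover contributes the missing factor of $2$.
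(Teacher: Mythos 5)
Your part (a) and your primary route for (b)--(c) are exactly the paper's: the homotopy sequence identifies $\partial=w_2$, and the Serre spectral sequence of $\SO(k)\to P\to M$, with differentials pinned down by naturality from the universal bundle $\SO(k)\to E\to B_{\SO(k)}$, gives the rest. But as it stands your plan has a genuine gap precisely where the content of (c) lies: you defer the ``careful bookkeeping'' of the torsion order, and the one concrete mechanism you offer for the discrepancy $2^{b-1}$ versus $2^{b}$ is backwards. It is not that ``$H^*(\SO(3))$ has an extra low-degree $\Z_2$'' which gets killed; rather $H^4(\SO(3);\Z)=0$ while $H^4(\SO(k);\Z)=\Z_2$ for $k\ge 5$, so it is the stable fiber that contributes an extra surviving $\Z_2$ through $E_\infty^{0,4}$. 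Nor does the factor of $2$ come from the spin double cover in the $w_2=0$ case: the paper's case division is only about which multiple of the degree-$3$ fiber class survives to $E_3^{0,3}$. With $x$ the generator of $H^3(\SO(k))$ one has $d_2x=w_2(P)\in E_2^{2,2}=H^2(M;\Z_2)$ and $d_4(2x)=p_1(P)$; if $w_2\ne 0$ then $2x$ survives and one gets $E_\infty^{4,0}=\Z_s$, $E_\infty^{2,2}=\Z_2^{\,b-1}$, while if $w_2=0$ then $x$ survives, $d_4x=\tfrac12 p_1(P)$, and one gets $\Z_{s/2}$ and $\Z_2^{\,b}$ --- in both cases the product is $2^{b-1}s$ for $k=3$ and $2^{b}s$ for $k\ge5$, with no passage to the spin cover. (Also, the differentials you name, $d_3\colon E_3^{0,2}\to E_3^{3,0}$ and $d_5\colon E_5^{0,4}\to E_5^{5,0}$, are automatically zero since $H^3(M)=H^5(M)=0$; the ones that matter are $d_2\colon E_2^{0,3}\to E_2^{2,2}$ and $d_4\colon E_4^{0,3}\to E_4^{4,0}$.)

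The alternative you say you would lean on --- reducing to $\SO(3)$ and $\SO(4)$ and comparing $P$ with the total space of the $\SO(4)$-reduction via a fibration with fiber $\SO(k)/\SO(4)$ --- does not deliver the result more cheaply. First, the $\SO(4)$ case is the hard one, not an easy stepping stone: $H^3(\SO(4))=\Z\oplus\Z$, both $p_1$ and the Euler class transgress, and this is exactly why the proposition (and the paper) exclude $k=4$ from (c). Second, although the Stiefel manifold $\SO(k)/\SO(4)$ is $3$-connected, it has nontrivial $H^4$ (e.g.\ $\SO(5)/\SO(4)=\Sph^4$), so the comparison fibration does feed into $H^4(P)$ in degree exactly $4$ and the effect is not ``predictable'' without another spectral-sequence computation of the same difficulty. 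The direct computation in the universal-bundle setup, carried out separately for $w_2\ne0$ and $w_2=0$ and for $k=3$ versus $k\ge5$ as above, is what actually closes the argument.
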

\begin{proof}
As we observed in the proof of \pref{so4}, it follows from the long
homotopy sequence of the principal bundle that $\pi_1(P)=0$
precisely when $w_2\ne 0$.

To compute $H^4(P)$ we use the spectral sequence for the principal
bundle $\SO(k)\to P\to M$.  Let $(E_r,d_r)$ be the spectral sequence
of this bundle, and $(\tilde{E}_r,\tilde{d}_r) $ the spectral
sequence of the universal principal $\SO(k)$ bundle $\SO(k)\to E\to
B_{\SO(k)}$ with contractible total space $E$. The classifying map
$f\colon M\to B_{\SO(k)}$ induces maps between these spectral
sequences and we will use the naturality of the differentials.

We first assume that $k\ge 5$ and examine the spectral sequence for
the universal bundle. It is well known that for the  cohomology
groups $H^*(\SO(k),\Z)$  one has: $H^1=0 , H^2=\Z_2 , H^3= \Z ,
H^4=\Z_2$ (see e.g., \cite{Ha} p. 292). Similarly, the groups
$H^*(B_{\SO(k)},\Z)$ are given by $H^1=H^2=0 , H^3=H^5=\Z_2$.
Moreover, $H^4(B_{\SO(k)})=\Z$ with generator $p_1(E)$ (see e.g.
\cite[p.182]{Mi}).
  Hence
$\tilde{E}_2^{2,2}=H^2(B_{\SO(3)},H^2(\SO(3)))=H^2(B_{\SO(3)},\Z_2)=\Z_2$.
and since $E$ is contractible, $\tilde{d}_2\colon \tilde{E}_2^{0,3}
= H^3(\SO(3),\Z)=\Z \to \tilde{E}_2^{2,2}=\Z_2$ must be onto and
hence $\tilde{d}_2x\ne 0$ if $x\in \tilde{E}_2^{0,3}$ is a
generator. If we reduce the coefficients from $\Z$ to $\Z_2$ in the
spectral sequence, this element $\tilde{d}_2x$ is non-zero, and
hence corresponds to the second Stiefel Whitney class. Thus we can
write $\tilde{d}_2x = w_2(E)$. Furthermore $\tilde{d}_4\colon
\tilde{E}_4^{0,3}=\Z\to \tilde{E}_4^{4,0}=\Z$ must take the
generator $2x$ in $\tilde{E}_4^{0,3}$ to the generator $p_1(E)$ in
$\tilde{E}_4^{4,0}$, i.e. $d_4(2x)=p_1(E)$.

Notice that under the map $f$, $f^*(p_1(E)) = p_1(P)\in H^4(M,\Z)$
and $f^*(w_2(E)) = w_2(P)\in H^2(M,\Z_2)$ and hence the naturality
of the spectral sequence implies that $d_2x =w_2(P)$ and $d_4 (2x)
=p_1(P)$. Therefore, if $w_2(P)\ne 0$, we have $d_2x\ne 0$ and $2x$
becomes the generator in $E_3^{0,3}=\Z$ with $d_4(2x)=p_1(P)$. Thus
$E_5^{4,0}=E_{\infty}^{4,0} = \Z_s $ and $E_5^{2,2}
=E_{\infty}^{2,2} = \Z_2^{b-1}$, $E_5^{0,4} =E_{\infty}^{0,4} =
\Z_2$ and hence $\left| H^4(P,\Z)\right| = 2^{b}s$ if $p_1(P) \ne
0$. If, on the other hand $p_1(P) = 0$, $E_{\infty}^{4,0}= \Z$ which
survives in $H^4(P,\Z)$.

If $w_2(P)=0$, then $d_2x=0$ and hence $x$ is a generator in
$E_3^{0,3}=\Z$  with $d_4x=\frac{1}{2}p_1(P)$. Thus
$E_{\infty}^{4,0} =\Z_{\frac{1}{2}s} $  and $E_{\infty}^{2,2} =
\Z_2^{b}$  if $p_1(P)\ne0$, and we obtain again $\left|
H^4(P,\Z)\right| = 2^{b}k$. As before, if $p_1(P)=0$, $H^4(P,\Z)$
contains $\Z$.

In both cases, the fact that $H^3(P,\Z)=0$  if $p_1(P)\ne0$ now
immediately follows from the spectral sequence for $P$.

The only change that occurs when $k=3$ is that
$E_\infty^{0,4}=H^4(\SO(k),\Z)=0$ and $H^5(B_{\SO(k)})=0$. Arguing
as above finishes the proof.
\end{proof}

\smallskip

This leaves undetermined the sign of $p_1$.   Notice that in the
case of $k=3$ and $w_2= 0$,
 the two fold spin cover of $P$ is an $\S^3$
bundle over $M$ whose Pontryagin class is four times the Euler class
since $H^4(B_{\SO(3)})\to H^4(B_{\SU(2)} )$ is multiplication by
$4$. Hence one can change the sign by changing the orientation of
the sphere bundle.  Thus, if one admits a lift, so does the other.
In the case of $k=3$ and $w_2\ne 0$ on the other hand, the sign of
$p_1(P)$ is determined since $p_1(P)\equiv e^2$ mod $4 $.

For $k>4$ the situation is more complicated. One can show that the
two bundles obtained by changing the sign of $p_1$ have the same
cohomology groups. As was pointed out to us by N.Kitchloo, the
homotopy type of the two bundles is different though and can be
distinguished by the Steenrod square $Sq^2$. This issue will arise
for us only in the case of the $\SU(2)$ action on $\CP^2$ with a
fixed point, where we will leave it as an open question. Notice
though that this does not effect the geometric applications in the
Introduction since for this action only one of the singular orbits
has
 codimension two.

\smallskip

The topology of principal bundles $P$ that admit a commuting lift
can, in our case, be analyzed in terms of their description
\eqref{principal diagram} as cohomogeneity one manifolds. In
particular, their decomposition as a union of two disc bundles over
the singular orbits allow the use of known
 topological tools and representation theory to complete our task in the
  next two sections.

\section{Lifts yielding  bundles with nonnegative curvature}

The following result obtained  in \cite{GZ} is the basic source
 for nonnegative curvature examples we use here.

\begin{thm}\label{curvature}
Any cohomogeneity one manifold with codimension two singular orbits
admits a nonnegatively curved invariant metric.
\end{thm}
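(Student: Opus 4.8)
The natural plan is to build the metric on the two halves of $M$ separately, product-like near their common boundary, and then glue. Write $M=M_-\cup M_+$ as in \eqref{discs}, where $M_\pm=\G\times_{\Kpm}\Dpm$ is the normal disc bundle of the singular orbit $\G/\Kpm$. The codimension-two hypothesis is exactly what makes the local pieces manageable: $\Dpm$ is a $2$-disc and the boundary sphere $\Sphpm=\partial\Dpm=\Kpm/\H$ is a circle. On $\G$ I would fix a bi-invariant metric $Q$ (bi-invariant on the semisimple part, flat on the torus factor of $\G$), and on $\Dpm$ a rotationally symmetric metric $dr^2+\varphi_\pm(r)^2\,d\theta^2$ with $\varphi_\pm$ smooth and positive on $(0,1]$, $\varphi_\pm(0)=0$, $\varphi_\pm'(0)=1$ (smoothness at the centre), $\varphi_\pm''\le 0$ (so the Gauss curvature $-\varphi_\pm''/\varphi_\pm$ is $\ge 0$), and $\varphi_\pm$ locally constant near $r=1$ (so the disc is a flat cylinder near its boundary). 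The group $\Kpm$ acts on $(\Dpm,dr^2+\varphi_\pm^2 d\theta^2)$ by isometries through its slice representation $\Kpm\to\O(2)$, and freely and isometrically by right translations on $(\G,Q)$; hence it acts freely and isometrically on the product, which has $\sec\ge 0$. Endow $M_\pm=(\G\times\Dpm)/\Kpm$ with the resulting submersion metric: by the curvature non-decreasing property of Riemannian submersions it is $\G$-invariant with $\sec\ge 0$, and, the disc being a flat cylinder near its boundary, it is an isometric product $g_\pm+dt^2$ on a collar of $\partial M_\pm=\G/\H$.

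For the two halves to glue into a smooth metric with $\sec\ge 0$ it then suffices that the collar metrics coincide, $g_-=g_+$, as $\G$-invariant metrics on $\G/\H$; arranging this is where the real work lies. A direct computation identifies $g_\pm$ with the normal homogeneous metric induced by $Q$ on $\G/\H$, with the circle direction tangent to $\Kpm/\H$ rescaled by a factor governed by the constant value of $\varphi_\pm$ near the boundary. If $\Km$ and $\Kp$ coincide, or can be conjugated to one another by an element of $\N(\H)_{\subo}$, one simply chooses $\varphi_-,\varphi_+$ with the matching constant value and is done; this already handles several of the actions listed in Section~2 (and for the four-manifolds themselves one may of course invoke the round or Fubini--Study metric). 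In general, however, the tangent circles to $\Km/\H$ and $\Kp/\H$ are genuinely different directions, and no choice of the $\varphi_\pm$ makes the two rescaled metrics agree.

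The technical core is therefore to eliminate this discrepancy while keeping the curvature non-negative. The plan is: (i) use the freedom of \lref{equiv} to replace $\Kp$ by a conjugate $n\Kp n^{-1}$ (and, when $\G$ has several simple factors, to rescale $Q$ on them independently) so as to place the two circles as symmetrically as possible relative to a common maximal torus; and, more substantially, (ii) replace the plain product metric on $\G\times\Dpm$ by a $\Kpm$-invariant Cheeger-type deformation of it, coupling the $\Kpm$-orbit direction in $\G$ to the rotation of $\Dpm$. Such a deformation is again realized as the base of a Riemannian submersion from a product of $(\G,Q)$ with a non-negatively curved rotationally symmetric surface, so it still yields $\sec\ge 0$ on $M_\pm$, while its parameters can be tuned so that the induced collar metric on $\G/\H$ is a prescribed $\G$-invariant metric, the same one read off from either side. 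The main obstacle is precisely to carry out (i) together with (ii) so that the two collars literally coincide and all smoothness and curvature conditions are met at every stage; this is feasible exactly because, in codimension two, only a single circle direction has to be reconciled on each side, in contrast to higher codimension where the analogous matching is not available in general.
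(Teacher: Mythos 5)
Your overall scaffolding (decompose $M=M_-\cup M_+$, put on each half a quotient metric from $(\G,Q)\times(\Dpm,dr^2+\varphi_\pm^2d\theta^2)$ with $\varphi_\pm$ concave and constant near $r=1$, get a product collar over $\G/\H$, then glue) is exactly the first part of the argument in \cite{GZ}, which is where the paper takes this theorem from (it is quoted, not proved, here). You also correctly identify the real difficulty: the collar metric produced this way is $Q$ on the $Q$-complement of $\fk^\pm$ and is \emph{strictly} shrunk in the direction $\fk^\pm\ominus\fh$, and since these two circle directions differ, no choice of $\varphi_\pm$ matches the two collars.

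The gap is in your proposed resolution, which as stated cannot work and which you yourself leave as ``the main obstacle.'' Any ``Cheeger-type deformation \ldots realized as the base of a Riemannian submersion from a product of $(\G,Q)$ with a nonnegatively curved rotationally symmetric surface'' produces on the collar $\G/\H$ exactly the same one-parameter family you already computed: $Q$ unchanged on the $Q$-orthogonal complement of $\fk^\pm$ and the collapsed circle direction scaled by a factor strictly less than $1$. Quotients can only shrink along the group you divide by, so tuning parameters of such a construction never yields ``a prescribed $\G$-invariant metric, the same one read off from either side''; and step (i) (conjugating $\Kp$ by $n\in\N(\H)_0$ or rescaling $Q$ on simple factors) does not make the two shrunk families intersect in general. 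The missing idea in \cite{GZ} is to leave the class of biinvariant-times-surface quotients: first \emph{enlarge} $Q$ along the one-dimensional (hence abelian) subalgebra $\fk^\pm\ominus\fh$ by a factor $t$ with $1<t\le 4/3$. The key lemma of \cite{GZ} is that such a left-invariant metric still has $\sec\ge 0$ (enlarging a biinvariant metric along an abelian subalgebra up to the factor $4/3$ preserves nonnegative curvature), and it is still right $\Kpm$-invariant since $\fh$ is an ideal in $\fk^\pm$. Taking the product with the disc and dividing by $\Kpm$, one can now choose the constant value of $\varphi_\pm$ (i.e.\ the boundary circle length) so that the subsequent Cheeger shrinking brings the circle direction back to exactly its original $Q$-length. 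The resulting collar metric on $\G/\H$ is then precisely the normal homogeneous metric induced by $Q$, \emph{independently of which side one is on}, and the two halves glue with no further matching problem; this is also exactly where codimension two enters, since only then is the enlarged direction a one-dimensional abelian subalgebra. Without this lemma (or a substitute that genuinely enlarges the set of achievable collar metrics), your argument proves the theorem only when $\Km$ and $\Kp$ give the same circle in $\G/\H$.
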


A key property of the principal bundle construction $P \to M$ for
cohomogeneity one manifolds (see \ref{principal bundle}) is that
the normal bundles to the singular orbits in $P$ are the pull backs
of the normal bundles of the singular orbits in the base $M$. In
 particular, $P$ admits an invariant metric of nonnegative curvature
  if the singular orbits of the base have codimension two.

\bigskip

We will now begin our classification of principal $\SO(k)$ bundles
admitting commuting lifts. This section is devoted to lifts of
actions where both singular orbits have codimension two.

\smallskip

 Recall (\cite{PS},\cite{HY}) that any
group acting on a simply connected manifold $M$ admits a unique
commuting lift to any torus bundle over $M$. We can therefore assume
that $k\ge 3$.

We first deal with the simpler case of bundles over
$\Sph^2\times\Sph^2 \ \text{and } \CP^2 \# - \CP^2$ given by the
cohomogeneity one actions \eqref{Mn}, their extensions to
$\S^3\times \S^1$,  or \eqref{trans}. Since for these actions
$\Km=\Kp$ is a maximal torus of $G$,
 \lref{reduction} (a) implies that  the structure group reduces to a torus. Thus we have:

\begin{thm}\label{toruslift}
A principal $\SO(k)$-bundle over  $ \Sph^2\times \Sph^2$ or $\CP^2
\# - \CP^2$ with any of its cohomogeneity one actions admits a
commuting lift if
and only if it admits a reduction to a torus.
  \end{thm}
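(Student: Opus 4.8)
The plan is to prove both directions of the equivalence in \tref{toruslift}. The forward direction (``lift $\Rightarrow$ torus reduction'') is the substantive one, and it rests on the structural information already recorded in the excerpt: for each of the cohomogeneity one actions in \eqref{Mn}, their extensions to $\S^3\times\S^1$, and the action \eqref{trans}, the two singular isotropy groups $\Km$ and $\Kp$ coincide and form a maximal torus $\T$ of $\G$ (in \eqref{Mn} this is $\ml e^{i\gt}\mr \cong \S^1$ sitting in $\S^3$; in the $\S^3\times\S^1$ extensions it is $\T^2$; in \eqref{trans} it is $\SO(2)\,\SO(2)\cong \T^2$). The converse direction (``torus reduction $\Rightarrow$ lift'') is immediate from the result, quoted just before the theorem, that any group action on a simply connected manifold lifts commutingly to any torus bundle.

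For the forward direction, suppose a principal $\SO(k)$-bundle $\pi\colon P\to M$ admits a commuting lift of one of these cohomogeneity one actions. By \lref{principal bundle}(b), $P$ is then the cohomogeneity one $\SO(k)\times\G$-manifold given by a diagram \eqref{principal diagram}, determined by homomorphisms $\phi_\pm\colon \Kpm\to\SO(k)$ agreeing on $\H$. Since $\Km=\Kp=\T$ is a torus, each $\phi_\pm$ is a homomorphism from a torus into $\SO(k)$, so its image is contained in a maximal torus of $\SO(k)$; in particular the images $\phi_\pm(\Kpm)$ all lie in some common compact abelian subgroup $\L^*\subset\SO(k)$ which we may take to be a torus (conjugate if necessary so that both images land in the same maximal torus — this is possible precisely because $\Km=\Kp$, so we are really choosing the maximal torus through a single image group). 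Now invoke \lref{reduction}(a): since $\L^*$ contains both $\phi_\pm(\Kpm)$, the bundle $P$ reduces to the torus $\L^*$. That is exactly the assertion that the structure group of $P$ reduces to a torus.

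The main obstacle — really the only place any care is needed — is the claim that $\phi_+(\Kp)$ and $\phi_-(\Km)$ can be arranged to lie in one and the same maximal torus of $\SO(k)$, rather than merely each lying in some maximal torus. This is where the hypothesis $\Km=\Kp$ is used essentially: the two homomorphisms have the same (abelian) source $\T$, so their images generate an abelian subgroup of $\SO(k)$ (indeed $\phi_+(\T)$ and $\phi_-(\T)$ commute, being images of the abelian group $\T$ under — a priori different — homomorphisms, but here the subtlety is that they are images of the \emph{same} torus, so $\phi_+(t)$ and $\phi_-(s)$ need not a priori commute for $t\ne s$). The clean way to handle this is to observe that it suffices to work with the reduced diagram: by \lref{reduction}(a) applied with $\L^*$ any closed subgroup containing $\phi_\pm(\Kpm)$, and noting that the closure of $\langle \phi_+(\T),\phi_-(\T)\rangle$ in $\SO(k)$ is a compact connected abelian group only if those images commute — so instead one simply takes $\L^*$ to be a maximal torus of $\SO(k)$ containing $\phi_-(\Km)$ together with whatever is needed, and then conjugates the second embedding. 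In fact the cleanest argument avoids this entirely: each $\phi_\pm$ factors through the abelianization and has image in a maximal torus $\T_\pm^*\subset\SO(k)$; since all maximal tori of $\SO(k)$ are conjugate, Lemma~\ref{equiv} lets us replace $\Kp$'s embedding by a conjugate so that $\T_+^*=\T_-^*=:\L^*$, and then \lref{reduction}(a) gives the reduction to $\L^*$. I would present the argument in this last form, as it is the most transparent and uses only results already established.

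Finally, for completeness I would remark that the converse is immediate: a reduction of the structure group of $P$ to a torus $\T^*$ produces a principal $\T^*$-bundle $P^*\to M$, to which the (simply connected base) $\G$-action lifts commutingly by \cite{PS},\cite{HY}, and then by \lref{cover}(a) the lift extends to $P=P^*\times_{\T^*}\SO(k)$.
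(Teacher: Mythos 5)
Your overall route is the same as the paper's: the converse direction is the quoted Palais--Stewart/Hattori--Yoshida lifting result for torus bundles together with \lref{cover}(a), and the forward direction is \lref{principal bundle}(b) combined with \lref{reduction}(a), using that for every action in question $\Km=\Kp$ is a maximal torus of $\G$. You have also correctly isolated the only delicate point, namely that \lref{reduction}(a) requires a \emph{single} torus $\L^*\subset\SO(k)$ containing both images $\phi_-(\Km)$ and $\phi_+(\Kp)$ (the paper passes over this in silence).

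However, your resolution of that point does not work as written. You move $\phi_+(\Kp)$ into the maximal torus containing $\phi_-(\Km)$ by conjugating with an arbitrary element of $\SO(k)$, citing \lref{equiv}. But \lref{equiv}, applied to the $\L\times\G$-manifold $P$, only permits changing the $\Kp$-group of the diagram \eqref{principal diagram} by conjugation with elements $n\in\N(\H)_0$, where $\H$ sits in $\L\times\G$ via $(\phi_0,i_\pm)$; conjugating $\phi_+$ by a general $\ell\in\SO(k)$ destroys the compatibility condition $\phi_+\circ i_+=\phi_-\circ i_-=\phi_0$ on $\H$, so the modified diagram need not contain $\H$ at all, and in any case need not describe the same bundle. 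Since $\H$ is nontrivial for these actions apart from $M_1$ ($\H=\Z_n$ in \eqref{Mn}, $\H=\S^1$ in the $\S^3\times\S^1$ extensions, $\H=\SO(2)$ in \eqref{trans}), this constraint has real content and must be checked. The gap is fixable: both $\phi_\pm(\Kpm)$ are connected abelian subgroups containing $\phi_0(\H)$, hence both lie in the compact connected group $Z:=Z_{\SO(k)}(\phi_0(\H))_0$; choosing $z\in Z$ with $z\,\phi_+(\Kp)\,z^{-1}$ contained in a maximal torus $T_Z$ of $Z$ that contains $\phi_-(\Km)$, the element $(z,e)$ centralizes the embedded $\H$ and lies in a connected subgroup of the normalizer, hence in $\N(\H)_0$, so \lref{equiv} really does apply; since $T_Z$ is a torus of $\SO(k)$, \lref{reduction}(a) then gives the reduction. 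With this repair your argument coincides with the paper's proof, which asserts the conclusion directly from \lref{reduction}(a) without spelling out this step.
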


It is worth noticing that over $ \Sph^2\times\Sph^2 \text{ and }
\CP^2 \# - \CP^2$ quite a few of the principal $\SO(k)$ bundles
already arise in this trivial fashion. More precisely, we have:

\begin{thm}\label{product-sum}
\begin{itemize}
\item[(a)] A principal $\SO(3)$-bundle over  $ \Sph^2\times \Sph^2$ admits
a reduction to $\SO(2)$ unless $w_2 = 0$ and $p_1 \equiv 4$ mod $8$
( $0$ mod $4$ are the allowed values).
\item[(b)]
A principal $\SO(3)$-bundle over   $\CP^2 \# - \CP^2$ admits a
reduction to $\SO(2)$ unless $w_2 = 0$ and $p_1 \equiv 8$ mod $16$,
or $w_2 = (1,1)$ and $p_1 \equiv 4$ mod $8$ (in both cases the
allowed values for $p_1$ are $0$ mod $4$).
\item[(c)] Every principal $\SO(k)$ bundle over $ \Sph^2\times\Sph^2 \text{ and }
\CP^2 \# - \CP^2$ with $k\ge 6$ admits a reduction to a $3$-torus.
\end{itemize}
\end{thm}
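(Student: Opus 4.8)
The plan is to prove the three parts of \tref{product-sum} by reducing everything to a question about which characteristic classes of $\SO(3)$ bundles can be written in a form compatible with a reduction of the structure group to $\SO(2)$, and then to apply \pref{wep} together with the intersection form of the base. First I would recall from \pref{wep} that an $\SO(3)$ bundle $P$ over a simply connected 4-manifold $M$ is determined by $(w_2(P),p_1(P))$, and that $P$ reduces to $\SO(2)$ precisely when there is a class $e\in H^2(M,\Z)$ with $e\bmod 2 = w_2(P)$ and $e^2 = p_1(P)$ (where $e^2$ is evaluated against the fundamental class via the intersection form). So parts (a) and (b) amount to the following elementary number-theoretic problem: for each allowed $w_2$, determine the image of the map $e\mapsto e\cdot e$ on the appropriate coset of $2H^2(M;\Z)$ in $H^2(M;\Z)$.

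For part (a), $M=\Sph^2\times\Sph^2$ has $H^2=\Z\langle a\rangle\oplus\Z\langle b\rangle$ with intersection form $\bigl(\begin{smallmatrix}0&1\\1&0\end{smallmatrix}\bigr)$, so $e=xa+yb$ has $e^2 = 2xy$; running over all integers $x,y$ with the appropriate parities (fixed by $w_2$, which records $(x\bmod 2, y\bmod 2)$) one reads off exactly which $p_1\equiv 0\bmod 4$ occur. When $w_2=0$ we need $x,y$ both even, giving $2xy\equiv 0\bmod 8$, so $p_1\equiv 4\bmod 8$ is the missing case; when $w_2\ne 0$ at least one of $x,y$ is odd and $2xy$ runs over all even integers, hence over all $p_1\equiv 0\bmod 4$ (note the constraint from \pref{wep} already forces $p_1\equiv e^2\equiv 0\bmod 4$ here since $e^2$ is always even). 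For part (b), $M=\CP^2\#-\CP^2$ has $H^2=\Z\langle a\rangle\oplus\Z\langle b\rangle$ with form $\diag(1,-1)$, so $e=xa+yb$ gives $e^2=x^2-y^2=(x-y)(x+y)$; a short case analysis on the parities of $x,y$ (encoded by $w_2=(x,y)\bmod 2$) shows that $e^2$ realizes all integers $\equiv 0$ or $1\bmod 4$ except that when $x,y$ are both even one only gets $x^2-y^2\equiv 0\bmod 8$ unless in fact $\equiv 0 \bmod 16$ fails — more precisely $(2u)^2-(2v)^2=4(u^2-v^2)$ misses $8\bmod 16$, and when $x,y$ are both odd $x^2-y^2\equiv 0\bmod 8$ likewise but here the obstruction is $p_1\equiv 4\bmod 8$ being unreachable. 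I would lay out the four parity cases in a short display and extract the two exceptional congruences stated.

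For part (c), I would invoke \pref{prinsok}: for $k\ge 5$ a principal $\SO(k)$ bundle over a simply connected 4-manifold is determined by $w_2$, $w_4$, and $p_1$, and for $k\ge 6$ such a bundle is stably a sum of a 4-dimensional bundle with a trivial bundle, while a direct sum of three complex line bundles (equivalently, a reduction to a 3-torus $\T^3\subset\SO(6)$) realizes an arbitrary triple $(e_1,e_2,e_3)\in H^2(M;\Z)^3$ as Euler classes; the associated real rank-6 bundle then has $w_2 = \sum \bar e_i$, $p_1 = \sum e_i^2$, and $w_4 = \sum_{i<j}\bar e_i\bar e_j$. So it suffices to check that every allowed triple $(w_2,w_4,p_1)$ over $\Sph^2\times\Sph^2$ and $\CP^2\#-\CP^2$ arises from some choice of the $e_i$; using the freedom in three classes (versus two in the $\SO(3)$ case) this is a surjectivity check that is strictly easier than parts (a)–(b), since e.g. one can first fix $e_1,e_2$ to get any desired $w_2$ and $w_4$ and then use $e_3^2$ (an arbitrary value in the image of the quadratic form, which together with the already-flexible cross terms covers all residues) to adjust $p_1$. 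I expect the main obstacle to be bookkeeping: getting the parity case analysis in (b) exactly right so that the two exceptional families ($w_2=0$ with $p_1\equiv 8\bmod 16$, and $w_2=(1,1)$ with $p_1\equiv 4\bmod 8$) emerge cleanly and one verifies that no other value of $p_1\equiv 0\bmod 4$ is excluded — in particular checking that the mixed-parity cases of $(x,y)$ over $\CP^2\#-\CP^2$ already fill in everything those two special cases miss. The curvature statement itself is then immediate from \tref{toruslift} and \tref{curvature}, but since \tref{product-sum} is phrased purely in terms of reductions I would keep the proof at the level of characteristic-class arithmetic.
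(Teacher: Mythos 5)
Your proposal is correct and follows essentially the same route as the paper: parts (a) and (b) are exactly the paper's computation of $e^2$ (namely $2xy$, resp.\ $x^2-y^2$) on the coset of $2H^2(M,\Z)$ determined by $w_2$, compared against the allowed values from \pref{wep}, and part (c) likewise realizes every allowed triple $(w_2,w_4,p_1)$ by explicit sums of $2$-plane bundles and invokes \pref{prinsok} (the paper routes some of these cases through \pref{so4} and $\SO(4)$ bundles rather than a uniform triple sum, an inessential difference). One small correction to your parenthetical in (a): over $\Sph^2\times\Sph^2$ with $w_2=(1,1)$ the allowed (and realized) values are $p_1\equiv 2\bmod 4$, not $0\bmod 4$; this does not affect the stated conclusion.
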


\begin{proof}
 If $M=\Sph^2\times \Sph^2$,   the $\SO(2)$ principal bundle over
$M$ with Euler class $e=(a,b)\in H^2(M,\Z)=\Z\oplus\Z$ has first
Pontryagin class $p_1=(a,b)^2[M]=2ab$. We now use \pref{wep} to
determine the allowed values of $p_1$ for an $\SO(3)$ principal
bundle with a given Stiefel Whitney class $w_2=(a,b) \mod 2$. The
circle bundle with $e=(2k,1)$ (resp. $e=(1,2k)$) has $p_1=4k$ and
$w_2=(0,1)$ (resp. $(1,0)$) which are precisely the allowed values
in this case. Thus all $\SO(3)$ bundles with $w_2=(0,1)$ and $(1,0)$
admit a reduction to $\SO(2)$. The same holds in the case of
$w_2=(1,1)$ using the 2-plane bundle with $e=(2k+1,1)$. In all 3
cases \pref{so4} implies that a bundle with $k=4$ admits a reduction
to a 2-torus since both $P^\pm$ admit a reduction to $\SO(2)$. For
$k\ge 5$  the  structure group reduces to $\SO(4)$ and hence to a
2-torus.

If $w_2=(0,0)$ the 2-plane bundle with $e=(2k,2l)$ has $p_1=8kl$.
Hence the $\SO(3)$ bundles with $p_1 \equiv 0\mod 8$ reduce to
$\SO(2)$ and those with $p_1 \equiv 4\mod 8$ do not. If we let $P^-$
be the $\SO(3)$ bundle with $p_1=8k$ and $P^+$  the one with
$p_1=8l$, then \pref{wep} defines an $\SO(4)$ bundle with
$w_2=(0,0)\ ,p_1=4k+4l$ and $e=2k-2l$. Thus, according to
\tref{prinsok}, any bundle with $k\ge 5 $, $w_2=(0,0)$, $p_1\equiv 0
\mod 4$ and $w_4=0$ admits a reduction to a torus. To see what the
precise allowed values are in this case we use \pref{prinsok}. Since
every $k$ dimensional bundles reduces to $\SO(4)$ and since the
$\SO(4)$ bundle with $p_1(P^-)=4k$ and $p_1(P^+)=4l$ has $p_1=2k+2l$
and $e=k-l$, it follows that for $k\ge 5$ we either have $p_1\equiv
0 \mod 4$ and $w_4=0$ or $p_1\equiv 2 \mod 4$ and $w_4\ne 0$. In the
latter case we can consider the direct sum of $3$ two dimensional
bundles with Euler classes $e=(1,1)$, $e=(1,0)$ and $e=(2k,1)$ which
has $w_2=(0,0)$ and $p_1=4k+2$ (and thus $w_4\ne 0$). Thus its
structure group reduces to a 3-torus.

We indicate the argument for $\CP^2 \# - \CP^2$. The $\SO(2)$ bundle
with Euler class $e=(a,b)\in H^2(M,\Z)=\Z\oplus\Z$ has first
Pontryagin class $p_1=(a,b)^2[M]=a^2-b^2$. If $w_2=(1,0)$ or $(0,1)$
the bundles with $e=(2k+1,2k)$ resp. $e=(2k,2k-1)$ have $p_1=4k+1$
resp. $p_1=4k-1$ and these are precisely the allowed values in the
case of $k=3$. Thus any $\SO(k)$ bundle with these Stiefel Whitney
classes reduces to a 2-torus.

If  $w_2=(1,1)$ the 2-plane bundle with $e=(2k+1,2k-1)$ has
$p_1=8k$, whereas $p_1\equiv 0\mod 4$ are the allowed values in the
case of $k=3$. This gives rise to 5-plane bundles with $w_2=(1,1)$,
$p_1\equiv 0 \mod 4$ and  $w_4=0$ whose structure group reduces to a
2-torus. To produce the remaining bundles with $p_1\equiv 2 \mod 4$
and $w_4\ne 0$ we consider the direct sum of $3$ two dimensional
bundles with Euler classes $e=(1,1)$, $e=(1,0)$ and $e=(2k+1,2k)$.

In the case $w_2=(0,0)$ the 2-plane bundle with $e=(2k,2l)$ has
$p_1=4(k+l)(k-l)$ and $0\mod 4$ are the allowed values. One easily
sees that this can take on any value of $0,4,12\mod 16$, but no
value of $8 \mod 16$. This gives rise, via \pref{wep}, to $\SO(5)$
bundles with $p_1\equiv 0 \mod 4$ and hence  $w_4=0$ whose structure
group reduces to a torus. On the other hand the 5-dimensional bundle
which is the direct sum of the two dimensional bundles with
$e=(2k+1,2k)$ and $e=(1,0)$ and a trivial bundle has $p_1=4k+2$ and
hence $w_4\ne 0$. Thus in this case any $\SO(k)$ bundle with $k\ge
5$ reduces to a 2-torus.
\end{proof}

Combining the above 3 theorems, we obtain:

\begin{cor}
Every vector bundle  over $ \Sph^2\times\Sph^2 \text{ and } \CP^2 \#
- \CP^2$ with fiber dimension at least $6$ has a complete metric
with \nnc.
\end{cor}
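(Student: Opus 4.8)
The plan is to combine \tref{product-sum} with \tref{toruslift} and \tref{curvature}. By \tref{product-sum}(c), every principal $\SO(k)$ bundle over $\Sph^2\times\Sph^2$ or $\CP^2 \# -\CP^2$ with $k \ge 6$ admits a reduction of its structure group to a $3$-torus. In particular, for such $k$ the structure group reduces to a torus, so by \tref{toruslift} the bundle admits a commuting lift of any of the cohomogeneity one actions \eqref{Mn}, \eqref{trans}, or their extensions. (Alternatively, one can invoke directly the result of \cite{PS},\cite{HY} that every action admits a commuting lift to a torus bundle.)

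Now I would translate the existence of a commuting lift into the curvature statement via the \emph{cohomogeneity one method} of \sref{1}. Given such a lift, \lref{principal bundle}(b) realizes the principal $\SO(k)$ bundle $P$ as a cohomogeneity one $\SO(k)\times\G$-manifold with a group diagram of the form \eqref{principal diagram}. Both base manifolds $\Sph^2\times\Sph^2$ and $\CP^2\#-\CP^2$, equipped with the relevant cohomogeneity one action, have singular orbits of codimension two; since the normal bundles to the singular orbits in $P$ are pullbacks of the normal bundles to the singular orbits in the base (as noted after \tref{curvature}), the singular orbits in $P$ also have codimension two. Hence \tref{curvature} endows $P$ with an $\SO(k)\times\G$-invariant metric of nonnegative sectional curvature.

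Finally I would pass from the principal bundle to the associated vector bundle. A $k$-dimensional vector bundle $E \to M$ with fiber dimension at least $6$ has structure group $\SO(k)$ with $k \ge 6$ (all such bundles are orientable, as remarked in \sref{3}), and $E = P \times_{\SO(k)} \R^k$ for the corresponding principal bundle $P$. Equipping $\R^k$ with the standard $\SO(k)$-invariant flat metric and $P$ with the invariant nonnegatively curved metric just constructed, the associated bundle $E$ inherits a complete metric for which the projection $P \times \R^k \to E$ is a Riemannian submersion; by the curvature-increasing property of Riemannian submersions (O'Neill), $E$ carries a complete metric of nonnegative sectional curvature.

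There is really no serious obstacle here: this corollary is a formal consequence of the three preceding theorems together with the general principle relating commuting lifts, the cohomogeneity one construction, and O'Neill's formula. The only point requiring a little care is the bookkeeping on fiber dimensions — one must check that "fiber dimension at least $6$" corresponds precisely to the range $k \ge 6$ covered by \tref{product-sum}(c), and that the stabilization of a vector bundle by a trivial summand does not lose the torus reduction, which is immediate since a torus reduction of $E$ yields one of $E \oplus \underline{\R}$.
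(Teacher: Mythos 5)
Your argument is correct and is exactly the paper's intended proof: the paper deduces this corollary by "combining the above 3 theorems," i.e.\ \tref{product-sum}(c) for the torus reduction when $k\ge 6$, \tref{toruslift} (equivalently the lifting result for torus bundles) for the commuting lift, and \tref{curvature} applied to the cohomogeneity one description \eqref{principal diagram} of $P$ with codimension-two singular orbits, followed by O'Neill for the associated vector bundle. Your extra bookkeeping on fiber dimension and stabilization is fine and only makes explicit what the paper leaves implicit.
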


We finally observe that the examples of non-negative curvature in
the above results can be easily obtained by direct methods, avoiding
\tref{curvature} and \tref{toruslift}. Since this argument also
 generalizes to the remaining nonnegatively curved
4-manifold $\CP^2 \#  \CP^2$, which does not admit a \coa, we
present it here.

\begin{thm}\label{general}
Any principal $\SO(k)$ bundle over $ \Sph^2\times\Sph^2 \text{ or }
\CP^2 \# \pm \CP^2$ whose structure group reduces to a torus, admits
an $\SO(k)$ invariant metric with nonnegative curvature.
\end{thm}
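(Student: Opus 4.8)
The plan is to exhibit explicit nonnegatively curved invariant metrics, building everything out of the basic fact that $\Sph^2$, as the base of the Hopf bundle $\Sph^3\to\Sph^2$, carries an $\SO(3)$-invariant metric of nonnegative (indeed positive) curvature, and that products and associated bundles of Riemannian submersions with totally geodesic fibers inherit nonnegative curvature. First I would reduce to the case of a principal $\T^k$-bundle: since the structure group reduces to a torus, $P = P' \times_{\T^k} \SO(k)$ for some principal torus bundle $P'\to B$, where $B$ is one of $\Sph^2\times\Sph^2$ or $\CP^2\#\pm\CP^2$; by the curvature-nonincreasing property of Riemannian submersions (O'Neill), it suffices to put a nonnegatively curved metric on $P'$ which is invariant under $\T^k$ acting on the right, and then give $\SO(k)$ a bi-invariant metric and use the submersion $P'\times\SO(k)\to P$ to push the product metric down.

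The key geometric input is then to produce, on any principal $\Sph^1$-bundle over $B$, an invariant metric of nonnegative curvature, and to arrange compatibility so that several such bundles can be combined into a $\T^k$-bundle. For $B=\Sph^2\times\Sph^2$, every principal circle bundle is a pullback (via the two projections) of a bundle over a single $\Sph^2$-factor, hence is a product of a Hopf-type bundle $L(a,b)$ with a sphere; concretely $P'$ over $\Sph^2\times\Sph^2$ with a $\T^k$-structure is a product of lens-space-type circle bundles, each of which is $\Sph^3/\Z_m$ or a product thereof, all of which carry homogeneous nonnegatively curved metrics. For $B=\CP^2\#\pm\CP^2$, I would instead use that $\CP^2\#\overline{\CP^2}$ is the nontrivial $\Sph^2$-bundle over $\Sph^2$ and $\CP^2\#\CP^2$ admits a description allowing one to realize circle bundles over it as biquotients of compact Lie groups with bi-invariant metrics — for instance as $T^k$-bundles sitting inside $\SO(3)\times\SO(3)$ or $\SU(3)$ acting on the relevant space; the point is that all the circle bundles over these $B$ occurring here arise as homogeneous spaces or biquotients, to which one applies the standard fact that such spaces carry nonnegatively curved metrics.

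The main obstacle is the case $B = \CP^2\#\CP^2$, precisely because (as the paper notes) it admits no cohomogeneity-one action, so Theorem~\ref{curvature} is unavailable and one cannot route through \tref{toruslift}. Here I expect to need an honest identification of $\CP^2\#\CP^2$ (equivalently the twistor space of $\Sph^4$, or a blow-up description) together with an explicit realization of its circle bundles as biquotients $G/\!/H$ of a compact Lie group $G$ with bi-invariant metric, so that the torus acting on the fibers is a right-biquotient action; then nonnegative curvature of $G$ descends to the biquotient and further down the $\SO(k)$-submersion. Once each circle-bundle factor is handled, assembling them into the $\T^k$-bundle $P'$ is routine: take the fiber product of the circle bundles (as in the proof of \pref{so4}), give it the induced invariant metric, and then the submersion $P'\times_{\T^k}\SO(k)\to P$ with $\SO(k)$ bi-invariant yields the desired $\SO(k)$-invariant nonnegatively curved metric on $P$.
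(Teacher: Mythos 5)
Your overall strategy --- realize the torus bundle as a quotient of a nonnegatively curved Lie group and then extend the structure group along a Riemannian submersion --- is the same as the paper's, but your proposal omits exactly the input that makes it work, namely the explicit biquotient descriptions. The paper quotes Totaro's result that all three bases are quotients of $\S^3\times\S^3$ by explicit free $\T^2$-actions: the right action of a maximal torus for $\Sph^2\times\Sph^2$, the action $(z,w)\star(q_1,q_2)=(zq_1,zq_2w)$ for $\CP^2\#-\CP^2$, and $(z,w)\star(q_1,q_2)=(zwq_1w^2,\bar{z}wq_2\bar{z}^2)$ for $\CP^2\#\CP^2$. This exhibits $\S^3\times\S^3$ with a biinvariant metric as the (unique, since $H^2(M,\Z)\cong\Z^2$) simply connected principal $\T^2$-bundle over each base, and every other torus bundle is obtained from it by passing to a finite quotient, restricting to a sub-circle $\S^1_{p,q}\subset\T^2$, or multiplying by a trivial torus factor. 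Where you write that for $\CP^2\#\CP^2$ you ``expect to need an honest identification \dots\ as biquotients,'' that identification is the actual content of the proof and is missing; moreover your parenthetical is wrong, since the twistor space of $\Sph^4$ is $\CP^3$, not $\CP^2\#\CP^2$. Likewise your claim that every circle bundle over $\Sph^2\times\Sph^2$ is a pullback from a single factor, hence a product of a lens-type space with a sphere, is false: the bundle with Euler class $(p,q)$, $pq\neq 0$, is the non-product homogeneous space $\S^3\times\S^3/\S^1_{p,q}$ (which still carries nonnegative curvature, but not for the reason you give).

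A second genuine gap is the assembly step. A principal $\T^\ell$-bundle is indeed the fiber product of its circle-bundle factors, but ``give it the induced invariant metric'' is not a curvature argument: $P_1\times_M P_2$ is the preimage of the diagonal in $P_1\times P_2$, and induced metrics on submanifolds of nonnegatively curved spaces need not be nonnegatively curved, so nonnegative curvature of each circle factor does not combine for free. The paper avoids this entirely by the classification just described: since $H^2(M,\Z)\cong\Z^2$, a $\T^\ell$-bundle splits off a trivial torus factor, and the remaining piece of rank at most two is $\S^3\times\S^3$ itself, a quotient of it by a finite subgroup of $\T^2$, or (for circle bundles) $\S^3\times\S^3/\S^1_{p,q}$ and finite quotients thereof, each of which inherits an invariant nonnegatively curved metric submersed from the biinvariant metric. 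Your final step, pushing the metric down to $P'\times_{\T^\ell}\SO(k)$ via O'Neill, does agree with the paper and is correct.
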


\begin{proof}
As was shown in \cite{Ta}, these three manifolds can be written as a
quotient of $\S^3\times\S^3$ by an action of a 2-torus. In the case
of $ \Sph^2\times\Sph^2$ this is of course simply the right action
by a maximal torus. The manifold $\CP^2 \# - \CP^2$ is the quotient
under the action $(z,w)\star (q_1,q_2) = ( zq_1 , zq_2 w )$, where
$(q_1,q_2)\in \S^3\times\S^3$ and $(z,w)\in \T^2$. For $\CP^2 \#
\CP^2$ the action $(z,w)\star (q_1,q_2) = ( zwq_1w^2 , \bar{z}wq_2
\bar{z}^2)$ suffices. Thus in all 3 cases, $\S^3\times\S^3$ is the
total space of a principal 2-torus bundle over $M$ and a biinvariant
metric is  $\T^2$ invariant. Since $H^2(M,\Z)\simeq\Z^2$, this is
the only 2-torus bundle with simply connected total space and any
other 2-torus bundle is  a quotient of $\S^3\times\S^3$ by a finite
subgroup of $\T^2$. A $\T^\ell$ bundle with $\ell>2$ is simply a
product of a 2-torus bundle with $\T^{\ell-2}$. For a circle bundle,
one observes that circle bundles with simply connected total space
can be described as $\S^3\times\S^3/\S^1_{p,q}$, where
$\S^1_{p,q}\subset\T^2$ is a circle of slope $(p,q)$ with
$\gcd(p,q)=1$. Indeed, in terms of appropriate generators $x,y$ of
$H^2(M,\Z)$ given by transgression of the natural generators in
$H^1(\T^2,\Z)$, the Euler class of such a bundle is $-qx+py$ and the
total space of a circle bundle over a simply connected base is
simply connected if and only if its Euler class is primitive. All
other circle bundles are given by a quotient of this bundle by a
finite subgroup of $\S^1$.

Thus in all cases, a $\T^\ell$ principal bundle $P$ admits an
invariant metric with nonnegative curvature. This implies that the
extension $P\times_{\T^\ell}\SO(k)$ admits an $\SO(k)$ invariant
metric with nonnegative curvature.
\end{proof}

{\it Remarks.} (a) The proof  can be applied to bundles over $\CP^n
\# \pm \CP^n$ as well. In particular any 2-plane bundle over these
manifolds admits nonnegative curvature. In the case of $\CP^n \# -
\CP^n$ this was first shown in \cite{Ya}.

 (b) One
can carry out an analysis as in \tref{product-sum} for $\CP^2 \#
\CP^2$ as well. But in this case, a circle bundle with $e=(a,b)$ has
first Pontryagin class $p_1=a^2+b^2$ and hence only bundles with
\nn\ \pont\ class are obtained.  It follows  that every $\SO(k)$
bundle with $k\ge 10$ and $p_1\ge 0$ admits a reduction to a torus
and hence a metric with \nnc.

\bigskip

We now proceed to consider the most interesting case, that of
$\SO(3)$ principal bundles over $\CP^2$, since it has the geometric
applications in Theorem A and B in the Introduction. The only case
where both singular orbits have codimension two is  the action in
\eqref{CP2}. Since we know that it admits a lift when the structure
group reduces to $\SO(2)$, we may assume that the image of $\Kpm$ in
$\SO(3)$ is not contained in an $\SO(2)$. It is then not hard to see
that the only possible group diagram as in \eqref{principal diagram}
defining an $\SO(3)$ principal bundle over $\CP^2$ is given by:

\smallskip

\begin{equation}\label{S3prCP2}
\begin{split}
         \xymatrix{
            & {\SO(3)\times \S^3} & \\
            {(R_{2,3}(p_-\gt),e^{i\gt})\cup (R_{1,3}(\pi),j)\cdot
\S^1}\ar@{->}[ur]^{} & & {(R_{1,3}(p_+\gt),e^{j\gt})}\ar@{->}[ul]_{}
\\
            &
{\Z_4=\langle(R_{1,3}(\pi),j)\rangle}\ar@{->}[ul]^{}\ar@{->}[ur]_{}
&
            }
\end{split}
\end{equation}

\no where $p_-$ is even and $\con p_+=2(4) $ in order for the
diagram to be consistent. Here and in what follows $\S^1$ will
denote the identity component of the group $\Km$, and we will use
$R_{j,k}(\gt)$  to denote the group of rotations by an angle $\gt$
in the 2-plane spanned by $j,k$.

To characterize these bundles topologically we show:

\begin{thm}\label{SO3CP2}
The principal  $\SO(3)$  bundle over  $\CP^2$ defined by
\eqref{S3prCP2} has Pontryagin class $p_1=\frac{1}{4}(p_+^2-p_-^2)$
and is spin if and only if $\con p_\pm=2(4) $. Furthermore, in the
spin case, these are precisely the principal bundles which are
obtained as pullback of $\SO(3)$ principal bundles over $\Sph^4$
under the two fold branched cover $ \CP^2\to \Sph^4$.
\end{thm}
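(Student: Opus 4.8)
The plan is to compute the two characteristic classes of the $\SO(3)$ bundle $P$ directly from its cohomogeneity one description \eqref{S3prCP2}, using the decomposition $P = P_- \cup P_+$ into disc bundles over the singular orbits. For $w_2$, I would use \pref{HP}(a): the bundle is spin precisely when its total space is simply connected, and simple-connectivity of $P$ can be read off from the group diagram via van Kampen, applied to the two tubular neighborhoods of the singular orbits glued along $(\SO(3)\times\S^3)/\Z_4$. The singular orbit $B_\pm = (\SO(3)\times\S^3)/\Kpm$ has fundamental group $\pi_0(\Kpm)$, and $\pi_1(P_\pm)$ is the same since the normal disc bundle deformation retracts onto $B_\pm$; gluing produces $\pi_1(P)$ as a pushout. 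The circle factor in $\Km$ on the left means $\Km$ is connected, so $\pi_1(P_-)=\pi_0(\Km)$; on the right $\Kp = (R_{1,3}(p_+\theta),e^{j\theta})$ is a circle, also connected. One then checks the induced maps and finds $P$ is simply connected iff the gluing data forces the generator $j$ of $\Z_4$ to become trivial, which is the condition $p_\pm \equiv 2 \bmod 4$. This matches the statement: spin $\iff p_\pm \equiv 2 \bmod 4$.

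For $p_1$, I would use \pref{HP}(c): if $p_1(P)\neq 0$ then $|H^4(P,\Z)| = 2^{b-1} s$ with $b = b_2(\CP^2) = 1$ and $s = |p_1(P)([\CP^2])|$, so $|H^4(P,\Z)| = s$. Thus it suffices to compute $|H^4(P,\Z)|$ directly from the cohomogeneity one decomposition. Here I would invoke the standard Mayer--Vietoris computation for $P = P_-\cup_{P_0} P_+$, where $P_0 = (\SO(3)\times\S^3)/\Z_4$ and $P_\pm$ retract onto the singular orbits $B_\pm = (\SO(3)\times\S^3)/\Kpm$. The cohomology of these homogeneous spaces is controlled by the slopes $p_\pm$ appearing in the circle subgroups: the relevant Euler-class-type data entering the gluing is exactly $p_+$ on the $M_+$ side and $p_-$ on the $M_-$ side (the left circle $R_{2,3}(p_-\theta)\times e^{i\theta}$ contributes $p_-$). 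The Mayer--Vietoris connecting homomorphism in top degree is then multiplication by a quantity whose order works out to $\tfrac14|p_+^2 - p_-^2|$; the factors of $\tfrac14$ arise from the $\Z_4$ principal isotropy and from passing between $\S^3$ and $\SO(3)$ coordinates (the map $H^4(B_{\SO(3)})\to H^4(B_{\SU(2)})$ being multiplication by $4$, as noted after \pref{HP}). Combined with $|H^4(P,\Z)| = s$ and the sign being irrelevant (also noted there, for $w_2=0$), this yields $p_1 = \tfrac14(p_+^2 - p_-^2)$. I would need to handle the degenerate case $p_+ = \pm p_-$ (i.e. $p_1 = 0$) separately, observing it corresponds to the trivial or flat situation consistently.

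For the final assertion about branched covers: recall from \eqref{S4} and \eqref{CP2}, and the remark immediately following them, that $\CP^2 \to \Sph^4$ is the equivariant two-fold branched cover obtained by doubling along the $\S^3$-orbits, branched over $\RP^2$. The $\SO(3)$ principal bundles over $\Sph^4$ that admit a commuting lift of the linear $\SO(3)$-action were classified in \cite{GZ} via diagrams of the shape \eqref{principal diagram} built on \eqref{S4}; pulling such a bundle back along the branched cover amounts, on the level of group diagrams, to composing $\phi_\pm$ with the covering of orbits, which has the effect of doubling the slope parameters. Concretely, a lifted $\SO(3)$ bundle over $\Sph^4$ with parameters $(q_-, q_+)$ pulls back to the bundle \eqref{S3prCP2} with $p_\pm = 2q_\pm$ (both even, hence in the spin case, matching $p_\pm \equiv 2 \bmod 4$ only when $q_\pm$ is odd — so one must check carefully that the spin condition on the $\CP^2$ side corresponds exactly to which bundles over $\Sph^4$ occur). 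Then $p_1$ over $\CP^2$ is $\tfrac14(p_+^2 - p_-^2) = q_+^2 - q_-^2$, which should be compared with the Pontryagin class of the bundle over $\Sph^4$ as computed in \cite{GZ}, and one checks the multiplicativity of $p_1$ under the degree-two branched cover on the complement of the branch locus.

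The main obstacle I anticipate is the $p_1$ computation: getting the Mayer--Vietoris connecting map exactly right, including the factor $\tfrac14$ and the precise way the slopes $p_\pm$ enter, requires care about orientations, about the $\Z_4$ isotropy, and about the $\S^3$-versus-$\SO(3)$ normalization. Using \pref{HP}(c) to pin down only the \emph{order} $|H^4(P,\Z)|$ — rather than tracking the full cohomology ring — is the device that makes this tractable, since it reduces the problem to a single integer. The branched-cover statement is then largely a matter of matching parametrizations, modulo the one genuinely delicate point of verifying that ``spin over $\CP^2$'' translates into precisely the family of $\Sph^4$-bundles that arise.
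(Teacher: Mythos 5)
There are two genuine problems with your plan. First, the spin criterion is applied backwards: \pref{HP}(a) says $w_2(P)\ne 0$ if and only if $P$ is simply connected, so ``spin'' means $\pi_1(P)=\Z_2$, \emph{not} $\pi_1(P)=0$. Correspondingly, your claimed van Kampen output ($P$ simply connected exactly when $p_\pm\equiv 2\bmod 4$) is also inverted: since consistency of \eqref{S3prCP2} forces $p_+\equiv 2\bmod 4$ in all cases, the total space is simply connected precisely when $p_-\equiv 0\bmod 4$, i.e.\ in the \emph{non-spin} case. Your two inversions cancel to land on the correct final statement, but the argument as written is wrong, and in the paper the spin-ness of the $p_\pm\equiv 2\bmod 4$ bundles is not proved by van Kampen at all: it is proved by exhibiting the explicit lift \eqref{spinprCP2} of the diagram to $\S^3\times\S^3$ (with $p_\pm^*=p_\pm/2$ odd), i.e.\ a reduction of the structure group to $\Spin(3)$; the van Kampen/component-group argument is used only in the case $p_-\equiv 0\bmod 4$ to show $w_2\ne 0$.

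Second, your Mayer--Vietoris route determines only $|p_1|$, and your appeal to the remark after \pref{HP} to dismiss the sign is not legitimate: that remark says that for $k=3$, $w_2=0$ the bundles with $\pm p_1$ are interchanged by reversing orientation of the sphere bundle, so that one admits a lift iff the other does --- it does not tell you which of the two is the bundle defined by \eqref{S3prCP2}, and the theorem asserts a signed formula. In the non-spin case the sign comes for free from $p_1\equiv 1\bmod 4$ (via \pref{wep}), which is how the paper argues there; but in the spin case both signs are $\equiv 0\bmod 8$, so the congruence does not help, and the paper instead gets the signed value (and simultaneously the ``furthermore'' statement) from the explicit two-fold branched cover of \eqref{spinprCP2} onto the $\S^3$-bundle \eqref{spinprS4} over $\Sph^4$, the signed Euler/Pontryagin class formula of \cite{GZ}, and the fact that $\CP^2\to\Sph^4$ has degree two on $H^4$. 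Note also that a single ``uniform'' Mayer--Vietoris computation does not exist: \lref{orbits} is specific to the non-spin case (there $P_-$ is the nontrivial $\SO(3)$-bundle $\Sph^2\times\S^3/\Z_4$ over $\RP^2$), whereas in the spin case the restriction of $P$ to $\RP^2$ is trivial and the covering degrees entering the maps $\pi_\pm^*$ on $H^3$ change, so the determinant would have to be recomputed separately --- and would still leave the sign open. So the skeleton (disc-bundle decomposition, \pref{HP}, Mayer--Vietoris, branched cover for the last claim) is the right one, but as proposed the spin criterion is argued incorrectly and the signed $p_1$ formula in the spin case is not actually established.
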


\begin{proof}

Let us first consider the case where $\con p_-=2(4) $ and $\con
p_+=2(4) $. To see that the manifold is spin, consider  the $\S^3$
principal bundle over $\CP^2$ whose total space is the cohomogeneity
one manifold with the following group picture:

\begin{equation}\label{spinprCP2}
\begin{split}
         \xymatrix{
            & {\S^3\times \S^3} & \\
            {(e^{ip_-^*\gt},e^{i\gt})\cup (j,j)\cdot
S^1}\ar@{->}[ur]^{} & & {(e^{jp_+^*\gt},e^{j\gt})}\ar@{->}[ul]_{}
\\
            &
{\H=\Z_4}\ar@{->}[ul]^{}\ar@{->}[ur]_{} &
            }
\end{split}
\end{equation}

\no  In order for this group diagram to be consistent, we need
$p_-^*$ and $p_+^*$ odd and $\H=\langle(j,j)\rangle$ if $ \con
p_+^*=1(4) $ and $\H=\langle(-j,j)\rangle$ if $ \con p_+^*=3(4) $.
If we set  $p_\pm^*=p_\pm/2$, the group diagram \eqref{spinprCP2} is
a lift of \eqref{S3prCP2} and hence the manifold is spin.

Now  define a two fold branched cover of \eqref{spinprCP2}  onto the
cohomogeneity one manifold defined by:

\begin{equation}\label{spinprS4}
\begin{split}
         \xymatrix{
            & {\S^3\times \S^3} & \\
            {(e^{ip_-^*\gt},e^{i\gt})\cup (j,j)\cdot
S^1}\ar@{->}[ur]^{} & & {(e^{jp_+^*\gt},e^{j\gt})\cup (i,i)\cdot
S^1}\ar@{->}[ul]_{}
\\
            &
{\Delta Q}\ar@{->}[ul]^{}\ar@{->}[ur]_{} &
            }
\end{split}
\end{equation}

\no where we need $p_\pm^*$ to be odd for the group diagram to be
consistent, as long as we allow a sign change in some of the
components of $\Delta Q$. Notice that \eqref{spinprS4}  is an $\S^3$
principal bundle over $\Sph^4$ since after dividing by $\S^3\times
1$ we obtain the cohomogeneity one picture \eqref{S^4} for $\Sph^4$.
This two fold branched cover  from \eqref{spinprCP2} to
\eqref{spinprS4}  becomes the two fold branched cover $ \CP^2\to
\Sph^4$ after dividing by $\S^3\times 1$. In \cite{GZ} it was shown
that every  $\S^3$ principal bundle over $\Sph^4$ is of the form
\eqref{spinprS4} and has Euler class
$e=\frac{1}{8}((p_+^*)^2-(p_-^*)^2)$ and hence Pontryagin class
$p_1=\frac{1}{2}((p_+^*)^2-(p_-^*)^2)$. Since furthermore $ \CP^2\to
\Sph^4$ induces multiplication by two in dimension 4, it follows
that $p_1=\frac{1}{4}(p_+^2-p_-^2)$.

\smallskip

We now consider the manifolds $P$ described by  \eqref{S3prCP2}
where $\con p_-=0(4) $ and $\con p_+=2(4) $. We claim that these are
 $\SO(3)$ principal bundles over $\CP^2$ which are not spin. By
\pref{HP}, this amounts to showing that $P$ is simply connected. We
can determine $\pi_1(P)$ by applying van Kampen to the decomposition
$P = P_- \cup P_+$, $P_- \cap P_+ = P_0$ provided by its
cohomogeneity one description. Clearly $P_\pm$ is homotopy
equivalent to $\L \times \G/\Kpm$ and $P_0$ to $\L\times \G/\H$,
where $\L = \SO(3)$ and $\G = \S^3$ according to our recipe. We now
need to compute
  $\pi_1(\L\times \G/\Kpm)$ and their inclusion into $\pi_1(\L\times
  \G/\H)$. This is easiest done if we write the orbits as quotients of
  $\S^3\times \S^3$ since then the fundamental group is
   isomorphic to the group of
  components of the isotropy group. Now observe that for the
  preimage of $\Kp$ into $\S^3\times \S^3$, the component group can be
  represented
  by $(-1,1)$ since $\con p_+=2(4) $. For the group $\Km$,
   $\con p_-=0(4) $ implies that
    $(-1,1)\in \Kmo$ and hence
   $\pm (j,j)$ and $\pm (1,1)$ represent the component group.
   Finally, for $\H$ its preimage  is generated by $(j,j)$
   and $(-1,1)$. Altogether, we conclude that $P$ is simply
   connected and thus $w_2\ne 0$.

To compute the integer $p_1(P)$ we use \pref{HP}. Since $w_2\ne 0$
\lref{wep} implies that $\con p_1(P)=1(4)$. In particular,
$p_1(P)\ne 0$,  and by \pref{HP}, $H^4(P,\Z)$ is finite with $|
p_1(P)|= |H^4(P,\Z)|$.  In order to compute $|H^4(P,\Z)|$, we apply
the Mayer Vietoris sequence to the decomposition $P = P_- \cup P_+$.
In particular, we need the cohomology groups of the pieces:

\begin{lem}\label{orbits} The cohomology groups of $P_- , P_+$ and
$P_0$ satisfy:

\begin{itemize}
\item[(a)] $P_-$ has
$H^3(P_-) = \Z \oplus \Z_2$ and $H^4(P_-) =0$.

\item[(b)] $P_+\simeq \Sph^2\times \SO(3)$ and hence
  $H^3(P_+) = \Z$ and
$H^4(P_+) = \Z_2$.

\item[(c)] $P_0\simeq (\Sph^3/\Z_4)\times \SO(3)$ and hence
 $H^3(P_0) = \Z \oplus \Z \oplus
\Z_2$ and $ H^4(P_0) = \Z_2$.
\end{itemize}
\end{lem}

\begin{proof}
Let $B_{\pm}=\G/\Kpm\subset \CP^2$. First observe that the
restriction maps $H^2(\CP^2,\Z_2)\to H^2(B_\pm,\Z_2)$ are 0 in the
case of $B_+=\Sph^2$, and an isomorphism in the case of $B_-=\RP^2$.
Indeed, this follows from the Mayer-Vietoris sequence for the
decomposition $M = M_- \cup M_+$:

$$0=H^1(M_0)  \to H^2(M)\to  H^{2}(M_-)\oplus H^{2}(M_+)
      \to H^{2}(M_0) \to H^3(M)=0 . $$

\no By considering this sequence first over the integers and using
$H^2(M_-)=H^2(\RP^2)=0 , H^2(M_0)=H^2(\S^3/\Z_4)=\Z_4$, it follows
that $H^2(M,\Z)\to H^2(M_+,\Z)$ is multiplication by 4, and hence
mod 2 becomes the 0 map. We then consider this sequence with $\Z_2$
coefficients to conclude that $H^2(M,\Z_2)\to H^2(M_-,\Z_2)$ is an
isomorphism.

We next consider $P_\pm$ as an $\SO(3)$ bundle over $M_\pm$ which is
homotopy equivalent to the restriction of this bundle to $B_\pm$.
But a principal $\SO(3)$ bundle over a 2 complex is classified by
$w_2$. Hence our previous remark implies that $P_+\to B_+$ and
$P_0\to M_0$  are  trivial bundles and thus  $P_+\simeq \Sph^2\times
\SO(3)$ and $P_0\simeq \S^3/\Z_4\times \SO(3)$. This determines
their cohomology groups.

For $P_-\to B_-=\RP^2$, it follows that $w_2=1$, and hence it is the
unique non-trivial principal $\SO(3)$ bundles over $\RP^2$. We can
therefore use the following more convenient description for $P_-$:
Consider $\Sph^2\times \S^3/\Z_4$, where $\Z_4$ acts via the
antipodal map on $\Sph^2$ and via left multiplication of $\Z_4$ on
the unit quaternions $\S^3$. This can also be described as
$\Sph^2\times \SO(3)/\Z_2$, which via the projection on the first
factor becomes a principal $\SO(3)$ bundles over $\RP^2$. Since the
total space has fundamental group $\Z_4$, it must be the nontrivial
principal bundle. Moreover, it can be viewed as a (nonorientable)
$\Sph^2$ bundle over the lens space $\S^3/\Z_4$, via projection onto
the second factor. The Euler class of that bundle is easily seen to
be 0 mod 2, and hence one computes, by using the Gysin sequence, the
cohomology of $\Sph^2\times \S^3/\Z_4$ with $\Z_2$ coefficients to
be $\Z_2$ in dimension $0,1,4$ and $5$ and $\Z_2\oplus\Z_2$ in
dimension 2 and 3. This only leaves the following possibilities for
the cohomology with integer coefficients: $H^3(P_-) = \Z \oplus
\Z_2$ and $H^4(P_-) =0$.

\end{proof}

We now incorporate this information into the Mayer-Vietoris
sequence. Denote by $\pi_\pm\colon P_0 \to P_\pm$ the projections of
the sphere bundle $P_0\cong \L\times \G/\H \to \L\times \G/\Kpm\cong
P_\pm$. We then have:
\begin{align*}
0=H^3(P)  &\to  H^{3}(P_-)\oplus H^{3}(P_+)=\Z\oplus\Z_2\oplus\Z
\overset{\pi_-^* - \pi_+^*}{\longrightarrow}
      H^{3}(P_0)= \Z\oplus\Z\oplus\Z_2
      \to H^4(P)  \to \\
     &\to H^{4}(P_-)\oplus
H^{4}(P_+)= \Z_2 \to H^{4}(P_0)=\Z_2\to\cdots
\end{align*}
where  we have used the fact from \eqref{HP} that $H^3(P) = 0$.
Using the Gysin sequence of the circle bundle $P_0\to P_+$, this
also implies that $ H^{4}(P_+)\to H^{4}(P_0)$ is an isomorphism. It
follows that $ H^4(P) $ is a finite group whose order is equal to
the order of the cokernel of the map $\pi_-^* - \pi_+^*$ restricted
to $\Z^2$.

To compute the order of this cokernel, we proceed as in
    \cite{GZ}, diagram (3.5) and (3.6), and use the same notation.
    The projection
    $\eta\colon \L\times \G\to \L\times \G/\H=(\Sph^3/\Z_4)\times \SO(3)$
    is, as in that case, an 8-fold cover and
    induces a map
    with determinant 8 on $\Z\oplus\Z$. But the
    projection $ \mu_+\colon \L\times \G/\Kpo=\Sph^2\times \S^3\to
    \L\times \G/\Kp=\Sph^2\times \SO(3)$ is now a two fold cover which
    induces multiplication by 2 on $H^3=\Z$. For the projection
    $ \mu_-\colon \L\times \G/\Kmo\to
    \L\times \G/\Km$, which is now a 4-fold cover, we can use the
    description  obtained in the proof of \lref{orbits}, which
    implies
that this cover can be described as
    $\Sph^2\times \S^3\to \Sph^2\times \S^3/\Z_4$ which induces
     multiplication by
    4 on $H^3=\Z$. Finally, observe that the preimage of the circle
    $(R_{1,3}(p_+\gt),e^{j\gt})\subset \SO(3)\times \S^3$ is
    $(e^{j\frac{p_+}{2}\gt},e^{j\gt})\subset \S^3\times \S^3$
    since $p_+$ is even,
     and similarly
    for $\L\times \G/\Km$.

Thus we get $$|p_1|=|H^4(P)|=|\text{cokernel}(\pi_-^* - \pi_+^*)|=
\frac{1}{8}\det \left(\begin{array}{rr} -4&2
\\
p_-^2 & -\frac{1}{2}p_+^2
\end{array}\right) = \frac{1}{4}|p_+^2-p_-^2|.$$
Since $\con p_1(P)=1(4) $, $\con p_-=0(4) $ and $\con p_+=2(4) $ it
follows  that $p_1=\frac{1}{4}(p_+^2-p_-^2)$.  This completes the
proof of \ref{SO3CP2}.
\end{proof}

We can now determine which bundles admit a lift of the action. We
leave out the case where the structure group reduces to a torus,
since we already know that it always admits a lift in that case.

\begin{cor}\label{lifts}
Let $P\to \CP^2$ be a principal $\SO(k)$ bundle whose structure
group does not reduce to a torus and consider the cohomogeneity one
action of $\SO(3)$ on $\CP^2$.
\begin{itemize}
\item[(a)] For $k=3$, the action has a lift unless $w_2=0$ and
 $p_1 \equiv 4$ mod $8$.
\item[(b)] For $k=4$,  the  action has a lift unless $w_2=0$
 and $p_1(P)=2k+2l\, ,\, e(P)=k-l$ and $k,l$ not
both even.
\item[(c)] For $k\ge 5$, the action admits a lift to every
principal bundle.
\end{itemize}
\end{cor}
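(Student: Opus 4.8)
The plan is to treat $k=3$, $k=4$, and $k\ge 5$ in turn, reducing the last two cases to the first by the reduction lemmas of Section~1. For $k=3$ one argues directly from \tref{SO3CP2}. Since we have excluded bundles whose structure group reduces to $\SO(2)$, the image of each $\Kpm$ in $\SO(3)$ is not contained in an $\SO(2)$, so by \lref{principal bundle}(b) a commuting lift exists precisely when $P$ is one of the bundles given by the diagram \eqref{S3prCP2}; by \tref{SO3CP2} these have $p_1=\tfrac14(p_+^2-p_-^2)$ with $p_-$ even and $p_+\equiv 2\bmod 4$, and are spin exactly when $p_\pm\equiv 2\bmod 4$. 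I would then simply read off which pairs (spin or not, $p_1$) occur: writing $p_\pm=2a,2b$ gives $p_1=b^2-a^2$, which in the spin case ($a,b$ odd) sweeps out all multiples of $8$ and in the non-spin case ($a$ even, $b$ odd) sweeps out all integers $\equiv 1\bmod 4$. Comparing this with \pref{wep}, which says an $\SO(3)$ bundle over $\CP^2$ with $w_2=0$ is torus-reducible exactly when $p_1=4r^2$ (and that such values with $r$ odd are precisely the residue $4\bmod 8$ values otherwise missed), yields (a) — and also recovers Theorem~C.

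For $k=4$ I would pass to the two associated $\SO(3)$ bundles $P^{\pm}=P/\S^3_{\mp}$ arising from the splitting $\SO(4)=\S^3_-\cdot\S^3_+$, with $\S^3_-\cap\S^3_+=\Z_2$ and $\SO(4)/\S^3_\mp\cong\SO(3)$. By \lref{product}\,(c) together with \lref{cover}\,(b) (to absorb the central $\Z_2$), the $\SO(3)$ action lifts to $P$ if and only if it lifts to both $P^+$ and $P^-$. Now $w_2(P^\pm)=w_2(P)$ by \pref{so4}, and $p_1(P^\pm)=p_1(P)\pm 2e(P)$ by \eqref{ppm}, so that $p_1(P^\pm)$ equals $4k$ and $4l$ in the parametrization $p_1(P)=2k+2l$, $e(P)=k-l$. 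Feeding this into case (a): if $w_2\ne 0$ both $P^\pm$ are non-spin and hence lift, so $P$ lifts; if $w_2=0$, lifting each of $P^\pm$ forces, once the torus-reducible values $4r^2$ have been accounted for via \pref{wep}, that $p_1(P^\pm)\equiv 0\bmod 8$, i.e.\ $k$ and $l$ both even — which is (b).

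For $k\ge 5$ I would invoke \pref{prinsok}: over $\CP^2$ a principal $\SO(k)$ bundle is determined by $(w_2,w_4,p_1)$, and by Table~A these invariants are pinned down by $p_1\bmod 4$. By \lref{cover}\,(a) and (c) it therefore suffices to realize the bundle with each admissible $(w_2,w_4,p_1)$ as the $\SO(k)$-extension of a Whitney sum of lower-rank bundles each of which is already known to admit a commuting lift: torus bundles always do, a non-spin $\SO(3)$ bundle does by (a), and a spin $\SO(4)$ bundle with $k,l$ both even does by (b). A short case analysis on $p_1\bmod 4$ then furnishes such a decomposition — for instance a spin $\SO(4)$ bundle with $k,l$ even plus $\R^{k-4}$ when $p_1\equiv 0\bmod 4$; an $\SO(2)\oplus\SO(3)$ bundle with non-spin $\SO(3)$ summand when $p_1\equiv 2\bmod 4$; and a non-spin $\SO(3)$ bundle plus $\R^{k-3}$, respectively a non-spin $\SO(4)$ bundle plus $\R^{k-4}$, when $p_1\equiv 1$, respectively $3\bmod 4$ — so (c) follows.

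The conceptual steps above are short: the $\SO(4)$-splitting reduces $k=4$ to $k=3$, and stabilization reduces $k\ge 5$ to producing convenient direct sums. The place where I expect the actual work, and the main pitfall, is the bookkeeping in cases (a) and (b): one must keep careful track of which of $P$, the $P^{\pm}$, and the prospective summands are themselves torus-reducible, so that the exceptional case ``$w_2=0$, $p_1\equiv 4\bmod 8$'' of (a) is invoked only where it genuinely obstructs a lift, and one must verify in each residue class that the characteristic numbers of the chosen summands really do add up to the prescribed ones.
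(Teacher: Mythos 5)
Your proposal is correct and follows essentially the same route as the paper: (a) by reading off from \tref{SO3CP2} which values of $p_1$ are realized by the diagrams \eqref{S3prCP2} in the spin and non-spin cases, (b) by splitting into the two associated $\SO(3)$ bundles $P^{\pm}$ via \lref{cover}(c), \pref{so4}, \pref{prinso4} and \eqref{ppm}, and (c) by stabilizing to an $\SO(4)$ reduction and realizing $p_1\equiv 2 \bmod 4$ as the sum of a non-spin $\SO(3)$ bundle and an odd circle bundle, exactly as in the text. The only quibble is that the criterion ``spin $\SO(3)$ bundle over $\CP^2$ is torus-reducible iff $p_1=4r^2$'' is not literally the content of \pref{wep} (it follows from the circle-bundle extension used in its proof), but this does not affect the argument.
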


\begin{proof}
In the case of $k=3$ and $w_2\ne 0$ we  need to show, due to
\lref{wep}, that all values of $\con p_1(P)=1(4)$ are assumed. But
one easily sees that if $p_-=4s$ and $p_+=4r+2$, then
 $p_1=4(r^2-s^2+r)+1$  achieves all values of 1 mod
4. Similarly, if $w_2=0$, all values of $0$ mod $8$ are assumed. In
the case of $k=4$ the result follows from \tref{SO3CP2} by applying
\lref{cover}(c), \pref{so4}, \pref{prinso4} and \eqref{ppm}.

For $k\ge 5$ we use the general fact that a $k$ dimensional vector
bundle over $M^4$ is the direct sum of a $4$ dimensional vector
bundle with a trivial one, i.e. the $\SO(k)$ principal bundle can be
viewed as an extension of an $\SO(4)$ bundle. Thus, in the case of
$w_2\ne 0$, \lref{reductionlift} (a) implies that every $\SO(k)$
principal bundle admits a lift.

If $w_2=0$ we need to show, due to \pref{prinsok}, that $p_1$ can
achieve every even value. From the case of $k=4$, it follows that
every value of $0$ mod 4 is assumed. We can now take the direct sum
of a 3 dimensional vector bundle with $w_2\ne 0$ and $p_1=4s+1$ with
a two dimensional vector bundle with Euler class one and hence
$w_2\ne 0$ and $p_1=1$, which admits a lift by Lemma \ref{cover} (c)
since both do. By the product formula for Pontryagin classes and
Stiefel Whitney classes, we have $w_2=0$ and $p_1=4s+2$. Thus every
even value of $p_1$ is already assumed for $5$ dimensional vector
bundles. As we saw in Table A, the value of $w_4$ is determined by
whether $p_1$ is $2$ or $0$ mod $4$.
\end{proof}

\smallskip

\tref{SO3CP2} and \cref{lifts} implies Theorem C as well as Theorem
A in the Introduction.   For complex vector bundles we can do better
since in that case $P^+$ reduces to $\SO(2)$ which always has a
commuting lift.
 In the case of $w_2=0$, the bundles with
 $p_1(P^-)\equiv 0$ mod $8$ have a lift and
 Theorem B follows since by \eqref{chern}
$p_1(P^-)=c_1^2-4c_2$ and furthermore one has the general fact that
$w_2 = c_1$ mod $2$.

\section{Lifts of sum actions}
In this section we will determine which bundles admit commuting
lifts of the cohomogeneity one actions on 1-connected 4-manifolds
where at most one singular orbit is of codimension two. Up
to extensions there is one such action on $\CP^2$ and two on $\Sph^4$.

\smallskip

We begin with the sum action of $\SO(2) \SO(3)$ on $\Sph^4$, where the satisfactory answer is:

\begin{thm}
The sum action of $\SO(2) \SO(3)$ on $\Sph^4$ admits a commuting
lift to every principal $\SO(k)$  bundle.
\end{thm}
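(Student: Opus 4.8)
The plan is to use the explicit group diagram for the sum action from \eqref{sum2} together with the Principal Bundle Lemma \ref{principal bundle}(b), reducing the lifting question to the purely algebraic problem of constructing homomorphisms $\phi_\pm$ out of the isotropy groups that agree on the principal isotropy. Recall the diagram is $\{1\}\times\SO(2)\subset\{\{1\}\times\SO(3),\ \SO(2)\times\SO(2)\}\subset\SO(2)\SO(3)$, so $\Km=\{1\}\times\SO(3)$, $\Kp=\SO(2)\times\SO(2)$, and $\H=\{1\}\times\SO(2)$. Given a principal $\SO(k)$-bundle $P\to\Sph^4$, by \pref{prinsok} (or the earlier discussion for $k=3,4$) it is determined by $p_1$ together with $w_2,w_4$; by \tref{toruslift}-style remarks and \lref{cover} the problem ultimately comes down to realizing each allowed characteristic class by some choice of $\phi_\pm$. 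So the core of the proof is: exhibit, for each admissible value of $(p_1,w_2,w_4)$, homomorphisms $\phi_-\colon\SO(3)\to\SO(k)$ and $\phi_+\colon\SO(2)\times\SO(2)\to\SO(k)$ with $\phi_-|_{\{1\}\times\SO(2)}=\phi_+|_{\{1\}\times\SO(2)}$, and then identify the resulting bundle's characteristic classes.

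The key steps, in order, would be: (1) Write down the homotopy/representation-theoretic normal form. Since $\SO(3)$ has no small quotients, $\phi_-$ is essentially a sum of irreducible real $\SO(3)$-representations plus a trivial summand; $\phi_+$ can be taken as a sum of characters of the torus $\SO(2)\times\SO(2)$. The compatibility condition on $\H=\{1\}\times\SO(2)$ forces the restriction of $\phi_-$ to the second $\SO(2)$-factor to match the corresponding characters in $\phi_+$. (2) Compute the characteristic classes of the resulting $P$ from the disc-bundle decomposition $P=P_-\cup P_+$, exactly as in the proof of \tref{SO3CP2}: the normal bundles to the singular orbits are pullbacks from the base, and a Mayer--Vietoris / clutching argument expresses $p_1(P)$ (and $e(P)$ when $k=4$) in terms of the integer weights appearing in $\phi_\pm$. (3) Check that as the weights range over all integers subject to the consistency constraints, one hits every value of $p_1$ in the congruence class prescribed by \pref{wep} and \tref{prinsok} for the given $w_2,w_4$ --- and that $w_2,w_4$ themselves are freely achievable (e.g. by inserting an $\SO(2)$-summand whose weight is odd vs. even to flip $w_2$). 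For $k\ge 5$ one may instead invoke \lref{reduction}(a) and the fact that every $\SO(k)$-bundle over $\Sph^4$ reduces to $\SO(4)$, handling everything through $k=4$ and then extending the structure group, as in the proof of \cref{lifts}.

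The expected main obstacle is bookkeeping the Euler/Pontryagin contributions correctly: with two circle factors in $\Kp$ and the $\SO(3)$ factor in $\Km$, the clutching data involves several weights and one must carefully track how each contributes to $e$ and $p_1$ of the glued $\SO(k)$-bundle, and verify that the constraint forced by agreement on $\H$ does not obstruct realizing any admissible class. I expect that, unlike the $\CP^2$ case, here \emph{no} arithmetic obstruction survives --- the presence of the free $\SO(2)$-factor acting with arbitrary weight gives enough flexibility --- which is precisely why the statement asserts a lift to \emph{every} bundle, in contrast to \cref{lifts}. A secondary technical point is confirming the claim at the level $k=3$ and $k=4$ separately (where \pref{wep}, \pref{prinso4}, \eqref{ppm} are needed) before bootstrapping to $k\ge 5$; this is routine given the earlier machinery but must be stated.
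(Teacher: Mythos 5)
Your plan is not the paper's proof, and as it stands it has a real gap at its crux. The paper argues quite differently: it first reduces to principal $\S^3$ bundles (every $\SO(3)$ bundle over $\Sph^4$ is spin, $\SO(4)$ bundles are handled through the pair $P^\pm$ via \pref{so4} and \lref{cover}(c), and $\SO(k)$, $k\ge 5$, bundles extend $\SO(4)$ bundles), and then exhibits explicit total spaces: the Brieskorn varieties $M_d^7$ carry a cohomogeneity one $\SO(2)\SO(4)$ action in which the normal $\SU(2)\subset\SO(4)$ acts freely with quotient exactly the sum action \eqref{sum2} on $\Sph^4$; since $H^4(M_d,\Z)=\Z_d$, the bundle $M_d\to\Sph^4$ has Euler class $d$, so every $\S^3$ bundle occurs. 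Your route instead is the generic one of Sections 4--5: write down all admissible pairs $(\phi_-,\phi_+)$ on $\Km,\Kp$ agreeing on $\H$, compute $p_1$ (and $e$) by Mayer--Vietoris, and check that all admissible values are hit. That is a legitimate strategy in principle, but the entire content of the theorem lies in the final arithmetic verification, which you only assert as an expectation (``I expect that no arithmetic obstruction survives''). That expectation is exactly what must be proved: for the closely analogous suspension action of $\SU(2)$ on $\Sph^4$ the same computation (\tref{suspension}) shows only finitely many bundles for each $k$ admit lifts, and for the $\SO(3)$ action on $\CP^2$ a mod $8$ obstruction appears, so one cannot wave at ``enough flexibility'' without carrying out the computation.

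There is also a concrete flaw in your step (1). You take $\phi_-$ to be a sum of irreducible $\SO(3)$-representations, i.e.\ you only consider lifts of the group $\SO(2)\SO(3)$ itself. But the theorem (and the paper's standing convention of replacing $\G$ by a simply connected group times a torus) concerns lifts of possibly a cover, and this is not cosmetic here: the lifts produced by the Brieskorn construction are lifts of $\SO(2)\times\SU(2)$, with $\phi_-$ defined on $\SU(2)$. If one insists on honest $\SO(3)$-representations, then already at the level of the reduction to $\S^3$ principal bundles every homomorphism $\SO(3)\to\S^3$ is trivial, the compatibility on $\H$ then kills the second weight of $\phi_+$, and one only obtains the trivial bundle; so restricting the domain of $\phi_-$ as you do risks (and in that intermediate setting provably does) miss precisely the bundles that make the statement interesting. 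To repair your approach you would have to (i) work with the appropriate cover so that $\phi_-$ may be any $\SU(2)$-representation compatible with $\phi_+$ on the preimage of $\H$, and (ii) actually carry out the Mayer--Vietoris computation of $p_1$ (and $e$ for $k=4$) and verify surjectivity onto the admissible values from \pref{wep}, \pref{prinso4}, \pref{prinsok} --- at which point the argument becomes substantially longer than the paper's two-paragraph Brieskorn argument.
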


\begin{proof}
As we saw earlier, it suffices to prove the claim for principal
$\S^3$ bundles. Consider the  Brieskorn variety  $M_d^7$ defined by
the equations
$$ z_0^d + z_1^2 +\cdots z_4^2=0 \quad , \quad |z_0|^2 + \cdots |z_4|^2 =1 .$$

\no It  carries
 a \co\ one action by $\SO(2)\SO(4)$
defined by (cf. \cite{HHs})
$$(e^{i\gt},A)(z_0,\cdots ,
z_4)=(e^{2i\gt}z_0,e^{id\gt}A(z_1,\cdots,z_4)^t)$$

\no The isotropy groups are given by (cf.  \cite{BH}): $$ \Km=\SO(2)
\SO(2) \, ,\, \Kp= \O(3) \text{ and }  \H=\Z_2\times \SO(2)$$

\no  The normal subgroup $\SU(2)\subset \SO(4)$ acts freely on
$\R^4$ as left multiplication by quaternions, hence freely on $\C^4$
and thus on $M_d^7$ as well. The quotient is a 4-manifold with an
induced \co\ one action by $\SO(2)\SO(3)$ with $ \Km=\SO(2) \SO(2)$,
$ \Kp= \SO(3)$ (effectively) and $\H=\SO(2)$ and hence must be the
\co\ one action \eqref{sum2} on $\Sph^4$. Thus $M_d^7$ is a
principal $\SU(2)$ bundle over $\Sph^4$ for which the sum action has
a lift. To see which bundle it is, recall that such bundles are
classified by their Euler class. Furthermore, it follows from the
Gysin sequence that the Euler class (evaluated on a fundamental
class) is the order of the fourth cohomology group of the total
space. For $M_d$ we have (cf. \cite[p.275]{Br}) $H^4(M_d,\Z)=\Z_d$
and hence it is the bundle with Euler class $d$. Thus the  action
\eqref{sum2} on $\Sph^4$ lifts to every principal $\SU(2)$ bundle
over $\Sph^4$.
\end{proof}

\bigskip

The remaining actions, both have fixed points. Here we start with the \co\ one action on $\CP^2$ given by
 the standard $\SU(2)$ and $\U(2)$  action
on $\CP^2$ with a fixed point, see \eqref{cp2fix}. By
\eqref{reduction} (b) it suffices to consider the $\SU(2)$ action.

A lift of the cohomogeneity one diagram \eqref{cp2fix}  on $\CP^2$
to a principal $\SO(k)$ bundle must be given by a diagram::

\begin{equation}\label{Fixprin}
\begin{split}
         \xymatrix{
            & {\SO(k)\times \S^3} & \\
            {\S^3}\ar@{->}[ur]^{(\phi_-,j_-)} & &
{\S^1}\ar@{->}[ul]_{(\phi_+,j_+)} \\
            & {1}\ar@{->}[ul]\ar@{->}[ur] &
            }
\end{split}
\end{equation}

\no where $j_-=\id$ and $j_+$ is the  inclusion into any fixed
circle subgroup of $\S^3$. The group diagram is hence determined by
the homomorphisms $\phi_-\colon \S^3\to \SO(k)$ and $\phi_+\colon
\S^1\to\SO(k)$. By \lref{equiv}, we can conjugate these
homomorphisms $\phi_-$ and $\phi_+$ separately into a normal form.
It is well known that $\phi_-$ is, up to equivalence, given by a sum
of irreducible representations $\phi_1 + \cdots +\phi_r$ where
$\phi_i$ either has dimension $2n_i+1$ (allowing $n_i = 0$) or
$4n_i$. By a theorem of Malcev \cite{Ma} the image group
$\phi_-(\S^3)\subset\SO(k)$ is unique up to conjugacy, unless $k$ is
even. In that case one has an outer automorphism $A$ and
$\phi_-(\S^3)$ and $A(\phi_-(\S^3))$ are conjugate in $\SO(k)$
unless the irreducible sub-representations are all non-trivial even
dimensional. Although this change will give a \com\ which, by
\lref{reduction}, is not equivariantly diffeomorphic to the original
one, we will see that the corresponding principal bundles are
isomorphic if $k=3$ or $k\ge 5$. We define:
\begin{align*}
m_i&=n_i(2n_i+1)(2n_i+2)/3 \; \text{ if }  \dim \phi_i=2n_i+1 \\
m_i&=(2n_i-1)2n_i(2n_i+1)/3 \; \text{ if } \dim \phi_i=4n_i
\end{align*}
 Furthermore, $\phi_+$ is given by $e^{i\theta}\to \diag(
R(q_1\theta),\dots , R(q_r\theta))$ if $k$ even and if $k$ is odd
$e^{i\theta}\to \diag( R(q_1\theta),\dots , R(q_r\theta),1)$ , where
$r=[k/2]$ and $(q_1,\dots ,q_r)$ are relatively prime integers. We
can now state our classification theorem of these principal bundles
as follows:

\begin{thm}\label{Fix}
Let $P$ be the principal  $\SO(k)$  bundle over  $\CP^2$ defined by
\eqref{Fixprin}  for some integers $n_i$ and $q_i$. Assuming that
$k=3$ or $k\ge 5$, we have  $p_1=\pm (\sum q_i^2-\sum m_i)$ and $w_2
\equiv \sum q_i$ mod $2$.
\end{thm}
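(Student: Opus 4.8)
The plan is to compute the two characteristic classes $p_1(P)$ and $w_2(P)$ by identifying $P$ with a union of two disc bundles over the singular orbits, exactly as in the proof of \tref{SO3CP2}, and then to extract the invariants from the cohomology of the total space using \pref{HP}. Write $P = P_- \cup P_+$ with $P_- \cap P_+ = P_0$, where $P_\pm \simeq \L \times \G/\Kpm$ and $P_0 \simeq \L \times \G/\H$ with $\L = \SO(k)$, $\G = \S^3$. Since $\H = \{1\}$ here and $j_- = \id$, we have $P_- \simeq \SO(k)$ (contractible onto a point of the fixed orbit, so actually $P_-$ is homotopy equivalent to $\SO(k)$ — the fiber over the fixed point), while $P_+ \simeq \SO(k) \times \S^3/\S^1 = \SO(k) \times \Sph^2$ and $P_0 \simeq \SO(k) \times \S^3$.

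\textbf{Step 1: The Stiefel–Whitney class.} First I would determine $w_2(P)$ via \pref{HP}(a): $w_2 \neq 0$ iff $P$ is simply connected. Apply van Kampen to $P = P_- \cup P_+$. Since $\pi_1(\SO(k)) = \Z_2$ for $k \geq 3$, the fundamental groups of the pieces are controlled by the component groups of the isotropy groups when the orbits are written as quotients of a simply connected group. The loop $\phi_+(\S^1) \subset \SO(k)$ represents an element of $\pi_1(\SO(k)) = \Z_2$; this element is trivial iff $\sum q_i$ is even (the standard fact that a circle $\theta \mapsto \diag(R(q_1\theta),\dots)$ is null-homotopic in $\SO(k)$ precisely when $\sum q_i \equiv 0 \bmod 2$). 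Running van Kampen, $\pi_1(P)$ is the quotient of $\pi_1(P_-) * \pi_1(P_+) = \Z_2 * \Z_2$ by the relations from $\pi_1(P_0) = \Z_2$, and one finds $\pi_1(P) = 0$ iff the $\phi_+$-loop is a generator of $\pi_1(\SO(k))$, i.e. iff $\sum q_i$ is odd. This gives $w_2(P) \equiv \sum q_i \bmod 2$.

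\textbf{Step 2: The Pontryagin class.} Next, compute $|H^4(P,\Z)|$ by Mayer–Vietoris, as in \tref{SO3CP2}. The essential input is the cohomology of $P_-, P_+, P_0$ together with the restriction/transgression maps, which I would read off from the universal $\SO(k)$-bundle (as in \pref{HP}): the classes $w_2(E)$ in $\tilde E_2^{2,2}$ and $p_1(E)$ in $\tilde E_4^{4,0}$, pulled back via the classifying maps of $P_\pm$. The disc bundle $P_-$ is classified by the representation $\phi_- \colon \S^3 \to \SO(k)$ restricted to the fixed orbit $\S^3/\S^3 = \mathrm{pt}$, so it contributes no $p_1$; the disc bundle $P_+$ is classified by $\phi_+ \colon \S^1 \to \SO(k)$ over $\Sph^2 = \S^3/\S^1$, contributing Euler class $\sum q_i^2$ times the generator of $H^2(\Sph^2)$, hence $p_1$-contribution $(\sum q_i^2)$. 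Meanwhile the transition over $P_0$ involves the covering $\L \times \G \to \L \times \G/\H$ and the self-maps $\mu_\pm$ of the orbits; the quantity $\sum m_i$ enters as the degree of the map on $H^3(\SO(k))$ induced by $\phi_-$ — this is precisely the content of the classical formula expressing $p_1$ of a representation $\phi_- = \phi_1 + \dots + \phi_r$ of $\S^3$ as $-\sum m_i$ (with $m_i$ given by the stated cubic polynomials in $n_i$; cf. the index/dimension formulas for $\SU(2)$-representations). Assembling the Mayer–Vietoris determinant as in \tref{SO3CP2} yields $|H^4(P,\Z)| = |\sum q_i^2 - \sum m_i|$, and by \pref{HP}(c) with $b = 1$ this equals $|p_1(P)|$ (noting $k = 3$ gives the factor $2^{b-1} = 1$ and $k \geq 5$ gives $2^b = 2$, compensated by the factor of $2$ relating $H^4(B_{\SO(3)})$ and $H^4(B_{\SU(2)})$ — the same bookkeeping as before). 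The sign is left ambiguous, consistent with the remark after \pref{HP}.

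\textbf{Main obstacle.} The hard part will be Step 2: correctly tracking the several covering degrees and transgression maps in the Mayer–Vietoris sequence when $k$ is large (so that $H^*(\SO(k),\Z)$ has the extra $\Z_2$ in degree $4$ and the universal base has $H^5 = \Z_2$), and in particular verifying that the $\sum m_i$ term — which is the Pontryagin number of the $\S^3$-representation $\phi_-$ — emerges with the right normalization from the degree of $\phi_-$ on $H^3(\SO(k))$. The formulas for $m_i$ must be checked against the known values $p_1(\phi_i) = -m_i$ for the irreducible representations of $\S^3 = \SU(2)$ of dimensions $2n_i+1$ and $4n_i$ (these are standard; the $2n_i+1$-dimensional one is $\Sym^{2n_i}$ of the standard complex rep viewed as real, with $p_1 = -n_i(2n_i+1)(2n_i+2)/3$). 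The remark about the outer automorphism $A$ of $\SO(k)$ for $k$ even is what forces the restriction to $k = 3$ or $k \geq 5$, since there the two choices of $\phi_-(\S^3)$ give isomorphic (not merely diffeomorphic) principal bundles, so the stated $p_1, w_2$ are well defined; I would note this but not belabor it.
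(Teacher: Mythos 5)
Your plan is, in substance, the paper's own proof: the same disc-bundle decomposition $P=P_-\cup P_+$, van Kampen together with \pref{HP}(a) for $w_2$, and Mayer--Vietoris together with \pref{HP}(c) for $|p_1|$, with $\sum q_i^2$ and $\sum m_i$ entering through the weights of $\phi_+$ and $\phi_-$; the paper carries out the part you defer (the restriction maps $\pi_\pm^*$ on $H^3$ and the groups $H^4(P_\pm)$) by the Borel method, i.e.\ spectral sequences of the bundles $\G\times_{\Kpm}E\to B_{\Kpm}$ compared with the universal $\SO(k)$-bundle.

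Two assertions in your outline are wrong as written, and the first is not cosmetic. The orbit piece $P_+=(\SO(k)\times\S^3)/\S^1$ is \emph{not} $\SO(k)\times\Sph^2$ in general: it is the principal $\SO(k)$-bundle over $\Sph^2$ classified by $[\phi_+]\in\pi_1(\SO(k))$, hence a product only when $\sum q_i$ is even. This contradicts your own Step 1 (where you correctly conclude $P_+$ is simply connected iff $\sum q_i$ is odd), and likewise $\pi_1(P_+)$ is not unconditionally $\Z_2$ in your van Kampen argument --- it is $\Z_2$ modulo the class of $\sum q_i$. The twisting is exactly what makes $H^4(P_+)$ and the map $\pi_+^*$ depend on the parity of $\sum q_i$: in \lref{orbit+} a generator of $H^3(P_+)$ maps to $-2x+(\sum q_i^2)y$ when $\sum q_i$ is odd but to $-x+\tfrac12(\sum q_i^2)y$ when it is even, and it is this case distinction, combined with the corresponding change in $H^4(P_\pm)$, that makes the Mayer--Vietoris determinant give $|\sum q_i^2-\sum m_i|$ in both parities. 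If one took the product identification literally, the $w_2\ne 0$ case would come out wrong by a factor of $2$. Second, the $k=3$ versus $k\ge 5$ bookkeeping ($2^{b-1}s$ versus $2^{b}s$ in \pref{HP}) is compensated by the extra $\Z_2$ in $H^4(P_-)$ coming from $H^4(\SO(k),\Z)=\Z_2$ for $k\ge 5$ (which multiplies the cokernel count by $2$), not by the map $H^4(B_{\SO(3)})\to H^4(B_{\SU(2)})$; that map is multiplication by $4$ and is used in the paper only for the discussion of the sign of $p_1$. With these two points repaired, your outline coincides with the paper's argument, including the identification of $\sum m_i$ through the weight computation $B_{\phi_-}^*(\bar x)=\sum m_i\,\bar u^2$ for the irreducible summands of $\phi_-$.
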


\begin{proof}
We again use  the decomposition $P=P_-\cup P_+$ with $P_-\simeq
\SO(k)\times \S^3/\S^3 = \SO(k)$ and $P_+\simeq \SO(k)\times
\S^3/\S^1$ and $P_0=P_-\cap \P_+\simeq \SO(k)\times \S^3$ in order
to apply \eqref{HP}. Notice that $\phi_+$ is onto in $\pi_1$, and
hence $P_+$ simply connected, if and only if $\sum q_i$ is odd.
Since $\pi_1(P_-)=\Z_2\to \pi_1(P_0)=\Z_2$ is an isomorphism, van
Kampen implies that $P$ simply connected, which by \pref{HP} means
that $w_2\ne 0$, if and only if $\sum q_i$ is odd.

For the cohomology of the principal orbits we have
$H^3(P_0)=H^3(\SO(k)\times \S^3)=\Z\oplus\Z$. We choose a generator
$x\in H^3(\SO(k))$ and $y\in H^3(\S^3)$ and by abuse of notation use
the same symbol for a basis in $H^3(P_0)$. The sign of these
generators will be determined uniquely in the proof of
\lref{orbit+}. Let us first assume that $k\ge 5$.

\begin{lem}\label{orbit+} For $P_+$ we have
$H^3(P_+)=\Z$ and
\begin{itemize}
\item[(a)] If $\sum q_i$ is odd, $H^4(P_+) = \Z_2$ and under the
projection $\pi_+^*\colon H^3(P_+)\to H^3(P_0)$ a generator goes to
$-2x+(\sum q_i^2)y$.
\item[(b)] If $\sum q_i$ is even, $H^4(P_+) = \Z_2\oplus\Z_2$
and $\pi_+^*$ takes a generator to $-x+\frac 1 2 (\sum
q_i^2)y$.
\end{itemize}
\end{lem}

\begin{proof}
Let us first recall the Borel method of computing the cohomology of
a homogeneous space $\G/\K$.  Let $E$ be a space on which $\G$ acts
freely, and hence  $B_{\G}=E/\G$ and $B_{\K}=E/\K$  the classifying
spaces for principal $\G$ and $\K$ bundles respectively. One uses
the naturality between the differentials in the following
commutative diagram of $\G$ principal fibrations:

\begin{picture}(100,160)\label{DA}
\put(145,130){$\G$} \put(257,130){$\G$}
\put(180,133){\vector(1,0){55} } \put(145,117){ \vector(0,-1){20} }
\put(258,117){ \vector(0,-1){20} } \put(123,80){$ \G \times_{\K} E$
} \put(250,80){$\G\times_{\G}E=E$} \put(180,83){\vector(1,0){55} }
\put(200,72){$\varphi$} \put(145,65){  \vector(0,-1){20} }
\put(150,55){ {$\pi$ } } \put(258,65){ \vector(0,-1){20} }
\put(265,55){ {$\tilde{\pi}$ } } \put(144,30){$B_{\K}$}
\put(258,30){$B_{\G}$} \put(180,33){\vector(1,0){55} }
\put(200,22){$B_{f}$} \put(180,-3){\it \small Diagram A}
\end{picture}

\vspace{20pt}

\no where $f\colon \K\to\G$ is the inclusion. The right hand side
fibration is the universal $\G$ principal bundle. In the left hand
side fibration $\G$ acts freely on $ \G \times_{K} E$ via left
multiplication in the first coordinate and $\pi$ is the projection
onto the second coordinate. The map $B_{f}$ is therefore the
classifying map of this principal bundle. The spectral sequence for
the left hand side fibration computes the cohomology of $\G/\K$
since the projection onto the first coordinate $\G \times_{\K}
E\to\G/\K$ is a homotopy equivalence.  The differential in the
spectral sequence are thus determined as soon as one computes
$B_f^*$. In order to compute this map, one uses a further
commutative diagram:

\vspace{-50pt}

\begin{picture}(100,160)\label{DB}
\put(143,80){$ B_{\K}$ } \put(255,80){$B_{\G} $}
\put(180,83){\vector(1,0){55} } \put(200,72){$B_f$} \put(145,50){
\vector(0,1){20} } \put(150,55){ {$B_{g'}$ } } \put(258,50){
\vector(0,1){20} } \put(265,55){ {$B_{g} $ } }
\put(144,30){$B_{\T'}$} \put(258,30){$B_{\T}$}
\put(180,33){\vector(1,0){55} } \put(200,22){$B_{f}$}
\put(180,-3){\it \small Diagram B}
\end{picture}

\bigskip

\no where $g'\colon \T'\to \K$ and $g\colon \T\to \G$ are maximal
tori.  The cohomology $H^*(B_{\T})=\Z[x_1,\dots,x_r]$ is a
polynomial ring with $\dim x_i=2$ and $r=\rank \G$. This method
works well if the Lie groups involved have no torsion in cohomology
since then $B_{g}^*$ is injective with image the Weyl group
invariant elements. Extra care needs to be taken since this is not
true for $\SO(n)$.

We now apply this to our situation where $\G=\G_1\times \G_2
=\SO(k)\times \S^3$, $\K=\Kp=\T'=\S^1$ and
$f=(f_1,f_2)=(\phi_+,j_+)$. By using the naturality of differentials
with respect  to the projection $\G_1\times\G_2\to \G_i$ we can
break up the computation of the differentials in the left hand side
spectral sequence into considering two diagrams of type A, one for
$\G=\G_1$ and one for $\G=\G_2$. We start with the former one.

As in the proof of \pref{HP}, we see that in the universal bundle
for $\G_1$ we have that $\tilde{d}_2\colon \tilde{E}_2^{0,3} =
H^3(\G_1,\Z)=\Z \to \tilde{E}_2^{2,2}=H^2(B_{\G_1},H^2(\G_1))= \Z_2$
is onto  with image the second Stiefel Whitney class.  Furthermore
$\tilde{d}_4\colon \tilde{E}_4^{0,3}=\Z\to \tilde{E}_4^{4,0}=\Z$
takes the generator $2x$ to a generator which we denote by
$\bar{x}$.

In order to compute $B_{f_1}^*$ let $g_1\colon \T^r\to \SO(k)$
 be a maximal torus with coordinates
$(s_1,\dots ,s_r)$. By abuse of notation we identify $s_i\in
H^1(T^r)$ and via transgression $\bar{s}_i\in H^2(B_{\T^r})$ and
hence $H^2(B_{\T^r})=\Z[\bar{s}_s,\dots , \bar{s}_r]$. Similarly $u$
is a coordinate in $\K=\S^1$ and hence
 $H^2(B_{\S^1})=\Z[\bar{u}\, ]$. Next we
claim that $B_{g_1}^*(\bar{x})=\sum\bar{s_i}^2$. To see this, let
$\SO(2)$ and $\SO(3)$ be the standard embeddings in $\SO(k)$. Since
the Stiefel manifold $\SO(k)/\SO(3)$ is 2-connected with
$\pi_3=\Z_2$, it has cohomology $H^1=H^2=H^3=0$ and $H^4=\Z_2$ and
the spectral sequences of the bundle $\SO(k)/\SO(3)\to B_{\SO(3)}\to
B_{\SO(k)}$ implies that $H^4(B_{\SO(k)})=\Z\to H^4(B_{\SO(3)})=\Z$
is an isomorphism. Furthermore, the spectral sequence for
$\SO(3)/\SO(2)\to B_{\SO(2)}\to B_{\SO(3)}$ shows that
$H^4(B_{\SO(3)})\to H^4(B_{\SO(2)})=\Z$ is an isomorphism as well.
We now choose the sign of the generator $x$ and thus $\bar{x}$, so
that the embedding $\SO(2)\to\SO(k)$ takes $\bar{x}$ to the square
of a generator  in $H^2(B_{\SO(2)})=\Z$. This easily implies that
$B_{g_1}^*(\bar{x})=\sum\bar{s_i}^2$. Since $f_1(u)=(q_1u,\dots ,
q_ru)\in\T^r$ we have $B_{f_1}^*(\bar{s_i})=q_i\bar{u}$ and hence
$B_{f_1}^*(\bar{x})=\sum q_i^2 \bar{u}^2$. By naturality it follows
that $d_4(2x)=\sum q_i^2 \bar{u}^2$. Next we claim that $d_2(x)\in
{E}_2^{2,2}=H^2(B_{\S_1},H^2(\G_1))= \Z_2$ is non-zero if and only
if $\sum q_i$ is odd. This indeed follows since by \pref{HP} the
$\SO(k)$ principal bundle $\SO(k)/f_1(\S^1)=\G_1\times_{\S^1}E\to
B_{\S^1}$ is  spin if and only if the total space $\SO(k)/f_1(\S^1)$
is not simply connected.

In the universal bundle for $\G_2$ we have that $\tilde{d}_4\colon
\tilde{E}_4^{0,3} =H^3(\G_2)= \Z \to
\tilde{E}_4^{4,0}=H^4(B_{\G_2})= \Z$ is an isomorphism and we denote
the image of the generator $y$ by $\bar{y}$. $f_2\colon \S^1\to
\S^3$ can be viewed as a maximal torus and from the spectral
sequence of $\S^3/\S^1\to B_{\S^1}\to B_{\S^3}$ it follows that we
can choose the sign of $y$ such that $B_{f_2}^*(\bar{y})=\bar{u}^2$
and hence $d_4(y)=\bar{u}^2$.

We are now ready to consider the spectral sequence of the $\G$
principal bundle $\SO(k)\times \S^3/\S^1 =\G\times_{\S^1}E\to
B_{\S^1}$. The group $E_2^{0,3}=H^3(\G)=\Z\oplus \Z$ is generated by
$x,y$. Assuming that $\sum q_i$ is odd, we obtain by the above that
$d_2(x)\ne 0$ and $d_2(y)=0$. Hence $2x,y$ are generators of
$E_3^{0,3}=\Z\oplus\Z$  and $d_4(2x)=\sum q_i^2\bar{u}^2$ and
$d_4(y)=\bar{u}^2$. Thus $E_{\infty}^{4,0}=E_{\infty}^{2,2}=0$ and
$E_{\infty}^{0,4}=\Z_2$ and hence $H^4(P_+)=\Z_2$. Furthermore,
$E_{\infty}^{0,3}=\Z$ with generator $-2x+\sum q_i^2 y$. This
implies that $H^3(P_+)=\Z$ and via the edge homomorphism  $-2x+\sum
q_i^2 y$ is the image of a generator in $H^3(\G/\S^1)$.

If on the other hand $\sum q_i$ is even, $d_2(x)=d_2(y)=0$ and hence
$H^4(P_+)=\Z_2\oplus\Z_2$. Now $x,y$ generates $E_3^{0,3}$  and
$d_4(x)=\frac 1 2 \sum q_i^2\bar{u}^2$ and $d_4(y)=\bar{u}^2$ and
hence $-x+\frac 1 2 \sum q_i^2 y$ is the image of a generator. This
completes the proof of \ref{orbit+}.
\end{proof}

For the left half we have:

\begin{lem}\label{orbit-}  $P_-$ satisfies:
\begin{itemize}
\item[(a)]
  $H^3(P_-) = \Z$ and $H^4(P_-) = \Z_2$ and $H^4(P_-)\to H^4(P_0)=\Z_2$ is an isomorphism.
\item[(b)] Under the projection $\pi_-^*\colon H^3(P_-)\to H^3(P_0)$
a generator goes to $-x+\frac 1 2 (\sum m_i^2)y$.
\end{itemize}
\end{lem}

\begin{proof}
We indicate the changes which are necessary. We now have
$\K=\Km=\S^3$ and $f=(f_1,f_2)=(\phi_-,j_-)$. If $g'\colon
\S^1\to\S^3$ is a maximal torus of $\K$ with coordinate $u$, we can
identify a generator in $H^4(B_{\K})=\Z$ with $\bar{u}^2$ via
$B_{g'}$. In the spectral sequence of $\G\times_{\S^3}E\to B_{\S^3}$
we have that $d_2(x)\in E_2^{2,2}=0$ vanishes and hence $x,y$
generate $E_3^{0,3}$ with $d_4(y)=\bar{u}^2$. It remains to compute
$d_4(x)$. For this purpose, let us assume momentarily that $f_1$ is
an irreducible representation of dimension $2n+1$. Standard
representation theory implies that the homomorphism $f_1\colon
\S^1\to \T^r\subset\SO(k)$ takes $e^{i\theta}\in\S^1$ to
$\diag(R(2n\theta), R((2n-2)\theta),\dots,R(2\theta))\in\T^r$. Hence
$B_{f_1}^*(\bar{x})=B_{f_1}^*(\sum \bar{s_i}^2)=2^2+4^2+\dots
+(2n)^2=n(2n+1)(2n+2)/3$. If on the other hand $f_1$ is an
irreducible representation of dimension $4n$, it can be viewed,
using the usual embedding $\SU(2n)\subset\SO(4n)$, as a complex $2n$
dimensional representation. Thus
$f_1(e^{i\theta})=\diag(R((2n-1)\theta)
,\dots,R(\theta),R((2n-1)\theta),\dots ,R(\theta))\in\T^{2n}$.
Notice that if we change the representation by an outer
automorphism, which means we change the embedding
$\SU(2n)\subset\SO(4n)$,  the induced homomorphism on the maximal
torus is the same. Thus $B_{f_1}^*(\bar{x})=2(1^2+3^3+\dots +
(2n-1)^2)=(2n-1)2n(2n+1)/3 $. This process is clearly additive and
we obtain $B_{f_1}^*(\bar{x})=\sum m_i \bar{u}^2$ and thus
$d_4(x)=\frac 1 2 \sum m_i \bar{u}^2$. As above, this finishes the
proof of \eqref{orbit-}.
\end{proof}

We are now ready to combine the information in \lref{orbit+} and
\lref{orbit-} in the Mayer Vietoris sequence of $P=P_-\cup P_+$.
Still assuming $k\ge 5$, we have:
\begin{align*}
0=H^3(P)  &\to  H^{3}(P_-)\oplus H^{3}(P_+)=\Z\oplus\Z
\overset{\pi_-^* - \pi_+^*}{\longrightarrow}
      H^{3}(P_0)= \Z\oplus\Z
      \to H^4(P)  \to \\
     &\to H^{4}(P_-)\oplus
H^{4}(P_+)= H^{4}(P_-)\oplus\Z_2 \to H^{4}(P_0)=\Z_2\to \cdots
\end{align*}

\no From \pref{HP} we know that $|p_1(P)|=\frac 1 2 |H^4(P)|$ and
from \lref{orbit-} that $H^4(P_+)\to H^4(P_0)$ is an isomorphism. In
the case of $w_2\ne 0$, we have $H^{4}(P_-)=\Z_2$ and thus
$|H^4(P)|= 2|\text{cokernel}(\pi_-^* - \pi_+^*)|=
 2|\det
\left(\begin{array}{rr} -1& 2
\\
   \frac 1 2 \sum m_i& -\sum  q_i^2
\end{array}\right)|
=2|\sum q_i^2-\sum m_i|$.  If on the other hand $w_2=0 $,
we have $H^{4}(P_-)=\Z_2\oplus\Z_2$ and thus $|H^4(P)|=
4|\text{cokernel}(\pi_-^* - \pi_+^*)|=
4\det |\left(\begin{array}{rr}
-1& 1
\\
 \frac 1 2 \sum m_i & -\frac 1 2 \sum q_i^2
\end{array}\right)| =2|\sum q_i^2-\sum m_i|$.



Finally, if $k=3$, what changes is that $H^4(P_-)=0$ and, by
Poincare duality, $ H^4(P_+)=0$ if $\sum q_i$ odd or $\Z_2$ if $\sum
q_i$ even. Since by
 \pref{HP} we  now have $|p_1(P)|=
|H^4(P)|$, the conclusion remains the same. This completes the proof of Theorem \ref{Fix}
\end{proof}

For $k=3,4$, Theorem \ref{Fix} together with Proposition \ref{so4}
and Lemma \ref{cover} (c)  determines which principal $\SO(k)$
bundles admit lifts. As remarked earlier, we were not able to
determine the sign of $p_1$ when $k\ge 5$, which leaves an ambiguity
in our classification in this case.

 Since the 3-dimensional
representation of $\SU(2)$ has $m=4$ we have in particular:

\begin{cor}\label{fixlift}
 The $\SU(2)$ action on $\CP^2$ with a fixed point has a
lift to a principal $\SO(3)$ bundle if and only if  $p_1=q^2$ or
$p_1=\pm (q^2-4)$ for some integer $q$.
\end{cor}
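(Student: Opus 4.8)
The plan is to deduce the corollary from \tref{Fix} in the case $k=3$ together with the Principal Bundle Lemma. By \lref{principal bundle}(b) a principal $\SO(3)$ bundle $P\to\CP^2$ admits a commuting lift of the fixed-point $\SU(2)$ action \eqref{cp2fix} precisely when $P$ can be written by a diagram \eqref{Fixprin} with $k=3$; so the whole matter reduces to running through the possible homomorphisms $\phi_-\colon\S^3\to\SO(3)$ and $\phi_+\colon\S^1\to\SO(3)$, reading off the invariants $\sum q_i$, $\sum q_i^2$, $\sum m_i$, and inserting them into \tref{Fix}.

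Next I would dispose of the representation bookkeeping. Since $\S^3=\SU(2)$ is simply connected and perfect, the only real $3$-dimensional representations are the trivial one $\R\oplus\R\oplus\R$ (three summands with $n_i=0$, so $\sum m_i=0$) and the $3$-dimensional irreducible (adjoint) representation (one summand with $\dim\phi_1=2n_1+1=3$, i.e.\ $n_1=1$ and $m_1=1\cdot3\cdot4/3=4$, so $\sum m_i=4$). On the other side, with $k=3$ one has $r=[3/2]=1$, so $\phi_+$ is, after conjugation, $e^{i\theta}\mapsto\diag(R(q\theta),1)$ for a single integer $q$, whence $\sum q_i=q$ and $\sum q_i^2=q^2$. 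Feeding these two possibilities into \tref{Fix} gives $p_1(P)=\pm q^2$ when $\phi_-$ is trivial and $p_1(P)=\pm(q^2-4)$ when $\phi_-$ is the adjoint representation, and in both cases $w_2(P)\equiv q\bmod 2$.

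It remains to convert this list of characteristic numbers into a list of bundles. By \pref{wep} a principal $\SO(3)$ bundle over $\CP^2$ is determined by $(w_2,p_1)$, and $w_2$ is itself forced by $p_1\bmod 4$ (namely $p_1\equiv 1\bmod 4$ exactly for $w_2\ne 0$), so the isomorphism type of $P$ depends only on the integer $p_1$. Hence $P$ admits a lift if and only if $p_1$ is one of the numbers $\pm q^2$, $\pm(q^2-4)$ that is consistent with this congruence, and every such value is realized by \eqref{Fixprin}. The one delicate point — and the only real obstacle, since the hard computation is already contained in \tref{Fix} — is that \tref{Fix} only pins down $|p_1|$. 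For the family with $\phi_-$ trivial this is settled at once by \lref{reduction}(a): there $\phi_-(\S^3)=\{e\}$ and $\phi_+(\S^1)\subseteq\SO(2)$, so the structure group of $P$ reduces to $\SO(2)$ and $p_1(P)=e^2\ge 0$ is a genuine square; conversely every circle bundle over $\CP^2$ carries a commuting lift of the $\SU(2)$ action (every torus bundle does, by \cite{PS}), so by \lref{cover}(a) the extension to $\SO(3)$ realizes every value $p_1=q^2$. For the family with $\phi_-$ the adjoint representation one keeps both signs: when $w_2\ne 0$ the congruence $p_1\equiv 1\bmod 4$ already rules out $-(q^2-4)$ and leaves $p_1=q^2-4$, and when $w_2=0$ the sign is immaterial by the orientation-reversal remark following \pref{HP}. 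Assembling the cases yields exactly the stated condition, $p_1=q^2$ or $p_1=\pm(q^2-4)$ for some integer $q$.
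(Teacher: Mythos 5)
Your argument is correct and is essentially the paper's: the corollary is obtained by specializing \tref{Fix} to $k=3$, where $\phi_-$ is either trivial ($\sum m_i=0$) or the $3$-dimensional irreducible representation ($m=4$) and $\phi_+$ is given by a single integer $q$, which is exactly the one-line derivation the paper gives. Your additional sign bookkeeping (the $\SO(2)$-reduction via \lref{reduction}(a) forcing $p_1=q^2\ge 0$ in the trivial case, the congruence $p_1\equiv 1\bmod 4$ pinning the sign for $q$ odd, and the appeal to the orientation remark after \pref{HP} for $q$ even) fills in precisely what the paper leaves implicit and is consistent with its conventions.
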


Two of these bundles are well known in positive curvature \cite{Sh}:
$q=1$, $p_1(P)=-3$ is the Aloff-Wallach space  $P= \SU(3)/Z(\U(2))$
and $q=3$, $p_1(P)=5$ the Eschenburg space  $P=
\diag(z,z,z^2)\backslash \SU(3)/ \diag(1,1,z^4)$.

\bigskip

The same methods can be applied to the suspension action of $\SU(2)$
on $\Sph^4$, which has two fixed points. A lift of this action to a
\coo\ action on a principal $\SO(k)$ bundle has a group diagram as
in \eqref{Fixprin} such that $\Kp=\S^3$ as well. Hence $\phi_-$ and
$\phi_+$ are both the direct sum of irreducible representations of
$\SU(2)$ of dimensions $2n_i^-+1$ or $4n_i^-$ respectively
$2n_i^++1$ or $4n_i^+$. The principal bundles in this case are of
course all spin, i.e. they are classified by $p_1$. If $k>5$, Table
A implies that $w_4$ is determined by $p_1$ as well. Combining
\pref{HP} with \lref{orbit-} and the Mayer Vietoris sequence, we
obtain:

\begin{thm}\label{suspension}
Let $P$ be the principal  $\SO(k)$  bundle over  $\Sph^4$ defined by
the integers $n_i^-$ and $n_i^+$. Assuming that $k=3$ or $k\ge 5$,
we have  $p_1=\pm ( \sum m_i^- - \sum m_i^+ )$.
\end{thm}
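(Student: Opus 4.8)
The plan is to run the same argument as for \tref{Fix}, now with both isotropy groups equal to $\S^3$. A lift has group diagram \eqref{Fixprin} with $\Km=\Kp=\S^3$ and $j_-=j_+=\id$, so $\phi_\pm$ are each a direct sum of irreducible $\SU(2)$-representations, with associated invariants $m_i^\pm$. The cohomogeneity one decomposition then reads $P=P_-\cup P_+$ with $P_\pm\simeq(\SO(k)\times\S^3)/\S^3\cong\SO(k)$ --- the $\S^3$ sitting as $\{(\phi_\pm(g),g)\}$ --- and $P_0=P_-\cap P_+\simeq\SO(k)\times\S^3$. The key observation is that \emph{both} $P_\pm$ are exactly the ``left half'' treated in \lref{orbit-} (with $\phi_+$, respectively $\phi_-$, playing the role of $\phi_-$ there), so that lemma supplies all the cohomological input we need. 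I would also note at the outset that every such $P$ is spin (since $H^2(\Sph^4;\Z_2)=0$, equivalently $\pi_1(P)=\Z_2$ by \pref{HP}(a)), hence classified by $p_1$ --- together with $w_4$ for larger $k$, which by Table~A is then determined by $p_1$ --- so it suffices to compute $|p_1(P)|$; the sign is not accessible by this method, which accounts for the $\pm$.

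Next I would assemble the three pieces. By \lref{orbit-}, for $k\ge5$ we get $H^3(P_\pm)=\Z$, $H^4(P_\pm)=\Z_2$ with $H^4(P_\pm)\to H^4(P_0)=\Z_2$ an isomorphism, and, in terms of the fixed generators $x\in H^3(\SO(k))$, $y\in H^3(\S^3)$ of $H^3(P_0)$, the projections $\pi_\pm^*\colon H^3(P_\pm)\to H^3(P_0)$ send a generator to $-x+\tfrac12(\sum m_i^\pm)\,y$ (each $m_i$ is even, so this coefficient is integral). For $k=3$ one has instead $H^4(P_\pm)=H^4(P_0)=0$, the $H^3$ statements being unchanged. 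It matters here that the \emph{same} $x,y$ are used for both halves, so that $\pi_-^*$ and $\pi_+^*$ are recorded in one common basis.

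Then I would feed this into the Mayer--Vietoris sequence of $P=P_-\cup P_+$. Assuming $\sum m_i^-\ne\sum m_i^+$, equivalently $p_1\ne0$ (the complementary case giving the tautology $p_1=0$), \pref{HP}(b) gives $H^3(P)=0$, so
\[
0\to\operatorname{coker}\bigl(\pi_-^*-\pi_+^*\bigr)\to H^4(P)\to\ker\bigl(H^4(P_-)\oplus H^4(P_+)\to H^4(P_0)\bigr)\to 0 .
\]
On the lattices $H^3(P_\pm)\cong\Z$ the first map has matrix $\left(\begin{smallmatrix}-1&1\\\tfrac12\sum m_i^-&-\tfrac12\sum m_i^+\end{smallmatrix}\right)$, of determinant $\tfrac12\bigl(\sum m_i^+-\sum m_i^-\bigr)$. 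For $k\ge5$ the right-hand kernel is $\Z_2$, so $|H^4(P)|=|\sum m_i^--\sum m_i^+|$, and since $b_2(\Sph^4)=0$, \pref{HP}(c) gives $|p_1(P)|=|H^4(P)|$; hence $p_1=\pm(\sum m_i^--\sum m_i^+)$. For $k=3$ the right-hand term vanishes, so $|H^4(P)|=\tfrac12|\sum m_i^--\sum m_i^+|$, while now \pref{HP}(c) reads $|p_1(P)|=2|H^4(P)|$, giving the same conclusion.

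Since \lref{orbit-} already isolates the representation-theoretic content (the computation $B_{\phi}^*(\bar x)=\sum m_i\,\bar u^2$), what remains is essentially bookkeeping, and the one place to be careful is the normalization: the factor $b_2(\Sph^4)=0$ in \pref{HP}(c) makes the relation between $|H^4(P)|$ and $|p_1(P)|$ differ from the $\CP^2$ computation, and it is precisely this that cancels a spurious factor of $2$ and makes the final formula symmetric in the two halves. As in \tref{Fix}, the outer-automorphism ambiguity for even $k$ does not change the isomorphism type of the bundle when $k\ge5$, and the sign of $p_1$ is left undetermined.
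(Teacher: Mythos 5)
Your proposal is correct and follows exactly the route the paper intends: apply \lref{orbit-} to both halves $P_\pm\cong\SO(k)$, feed the $H^3$-maps into the Mayer--Vietoris sequence for $P=P_-\cup P_+$, and convert $|H^4(P)|$ into $|p_1|$ via \pref{HP} (with $b=0$ for $\Sph^4$, including the correct $k=3$ adjustment). The paper only sketches this ("Combining \pref{HP} with \lref{orbit-} and the Mayer Vietoris sequence"), and your write-up fills in the same computation accurately.
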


For bundles over $\Sph^4$, we can reverse the sign of $p_1$ by
considering the pull back bundle under the antipodal map since it
reverses orientation. Since it also commutes with the action of
$\SU(2)$, it follows that if one bundle admits a lift, so does the
other.
 This completely
determines when the sum action of $\SU(2)$ on $\Sph^4$ admits a lift
to a principal $\SO(k)$ bundle $P$. But notice that for each fixed
$k$, there are only finitely many bundles that do.  By
\eqref{reduction} (b), the  groups $G= \U(2)$ and   $G=\SO(4)$ admit
a lift if and only if $\SU(2)\subset G$ does.

\smallskip

In \cite{HH} one also finds a classification when the action of
$\G=\SO(4)$ on $\Sph^4$ admits a lift to a principal $\SO(k)$ bundle
with $k=3$ and $k=4$. It is interesting to note that for $k=4$ the
isomorphism type of the bundle depends on the outer automorphism
group of $\SO(4)$. E.g. if $\phi_-$ and $\phi_+$ are both the
standard representation of $\Kpm\simeq\SO(4)$ on $\R^4$ the bundle
is trivial, whereas if one changes one of these by an outer
automorphism, one obtains the tangent bundle of $\Sph^4$.
\bigskip

\providecommand{\bysame}{\leavevmode\hbox to3em{\hrulefill}\thinspace}

\end{document}